\DeclareMathOperator{\sign}{sign}
\newtheorem{proposition}{Proposition}
\newtheorem{assumption}{Assumption}[section]
\newtheorem{example}{Example}[section]
\def\old#1{}
\def\re{\Re}
\def\rn{\Re^n}
\def\m{\mu}
\def\frac#1#2{{#1\over #2}}
\newtheorem{example}{\hspace{2em }Example\;}[section]
\newtheorem{assumption}{\hspace{2em }Assumption\;}[section]
\title{Semilinear Dynamic Programming: Analysis, Algorithms, and Certainty Equivalence Properties\footnote{
This work was carried out at the Fulton School of Computing, and Augmented Intelligence, Arizona State University, Tempe, AZ.}}
\author{Yuchao Li and Dimitri Bertsekas}
\begin{document}
\maketitle

\begin{abstract}
We consider a broad class of dynamic programming (DP) problems that involve a partially linear structure and some positivity properties in their system equation and cost function. We address deterministic and stochastic problems, possibly with Markov jump parameters. We focus primarily on infinite horizon problems and prove that under our assumptions, the optimal cost function is linear, and that an optimal policy can be computed efficiently with standard DP algorithms. Moreover, we show that forms of certainty equivalence hold for our stochastic problems, in analogy with the classical linear quadratic optimal control problems.
\end{abstract}


\vspace{1pc}
\section{Introduction}\label{sec:intro}
There are quite a few problems in dynamic programming (DP for short), which are structured favorably in the sense that they possess properties that facilitate their analysis and computational solution. Examples of such properties are convexity or piecewise linearity of the optimal cost function,  a graph or grid problem structure, and the existence of special types of optimal policies in application contexts such as scheduling, inventory control, dynamic portfolio selection, and others. Perhaps the most prominent class of favorably structured problems arise in linear-quadratic optimal control  (linear system equation and quadratic cost function), where the optimal cost function is quadratic and the optimal policy is a linear function of the state. Linear-quadratic problems are also remarkable in that the solution of their stochastic versions has a certainty equivalence property: the optimal policy for a stochastic version of the problem is the same as the one obtained from a deterministic problem after the uncertain quantities have been replaced by their expected values.

A common characteristic of favorably structured DP problems is that they involve {\it special classes of cost functions $\widehat{\cal J}$ and policies  $\widehat {\cal M}$ that are closed under value and policy iteration\/}. By this we mean that the value iteration (VI) algorithm (i.e., the repeated applications of DP iterations), when started with a function $J\in \widehat{\cal J}$, generates a sequence of functions in $\widehat{\cal J}$. Moreover, the policy iteration (PI) algorithm starting from a policy in  $\widehat {\cal M}$, generates policies in $\widehat {\cal M}$.  In addition, every policy in $\widehat {\cal M}$ has a cost function that belongs to $\widehat{\cal J}$.  As a consequence of these relations, $\widehat{\cal J}$ and  $\widehat {\cal M}$ form an interconnected structured pair that lies at the heart of the methodology of favorably structured DP problems.

In this paper we consider a class of DP problems, which involve an $n$-dimensional system with a partially linear structure, and have properties that are qualitatively comparable to those of linear-quadratic problems. In particular, under our assumptions, we show that:
\begin{itemize}
\item[(a)] For deterministic problems (Sections \ref{sec:dete_formulation}-\ref{sec:algo_dete}), the optimal cost function is a linear function of the state (cf.\ the set of cost functions $\widehat{\cal J}$). Moreover, an optimal policy exists within a corresponding specially structured set  (cf.\ the set of policies $\widehat {\cal M}$), and can be efficiently computed with standard DP algorithmic methodology.
\item[(b)] For problems involving stochastic parameters, which are independent over time (Section \ref{sec:stochastic}), a classical form of the certainty equivalence principle holds. 
\item[(c)] For problems involving Markov jump parameters, which evolve over time according to a Markov chain (Section \ref{sec:markov}), a somewhat different type of certainty equivalence holds. In particular, there is a deterministic problem with favorable structure, which is obtained by replacing various stochastic quantities of the Markov jump problem with their expected values. The optimal policies and cost functions of the two problems are closely related, so that the Markov jump problem can be solved with the deterministic DP algorithmic methodology developed in Section \ref{sec:algo_dete}.
\end{itemize}

We note that some results of this type are known for finite horizon problems. The following example (originally given in \cite[Exercises 13-15, p.~67]{bertsekas1976dynamic}, and referred to as \emph{semilinear DP}) illustrates the key structure underlying our analysis—namely, the closure of a special class of cost functions under value iteration and the associated certainty equivalence property. Nevertheless, this finite-horizon problem is far simpler and structurally somewhat different than the infinite horizon problems that we address (it does not have the positivity structure).

\begin{example}
[A Finite Horizon Semilinear Problem] 
\label{examplefinitehorizon}
Consider a problem involving the system
$$x_{k+1} = A_kx_k + f_k(u_k) + w_k,\quad k=0,\ldots,N-1,$$
where $N$ is the length of the control horizon, the state $x_k$ is a vector in $\rn$, the control $u_k$ belongs to a set $U_k$, $f_k$ are given functions, and $A_k$ and $w_k$ are random $n\times n$ matrices and $n$-dimensional vectors, respectively, with given probability distributions that do not depend on $x_k$, $u_k$, or prior values of $A_k$ and
$w_k$.  Let also the cost function be linear in the state and have the form
$$E \left\{q_N'x_N + \sum_{k=0}^{N-1}
\big(q_k'x_k + g_k(u_k)\big)\right\},$$
where the expected value $E\{\cdot\}$ above is taken with respect to the distribution of $A_k$ and $w_k$, $q_k$ are given vectors  in $\rn$, $g_k$ are given functions, and a prime denotes transposition, here and later.  Then, assuming that the
optimal cost for this problem is finite,  it can be shown by induction that the cost-to-go functions of the DP algorithm are affine (linear plus constant). In particular, the DP algorithm generates the optimal cost-to-go functions $J_k(x_k)$ from states $x_k$ according to
$$J_k(x_k)=\min_{u_k\in U_k}\Big[q_k'x_k + g_k(u_k)+E\big\{J_{k+1}(A_kx_k+ f_k(u_k) + w_k)\big\}\Big],$$
starting with  the terminal cost function
$$J_N(x_N)=q_N'x_N.$$
Assuming that $J_{k+1}(x_{k+1})$ is linear-plus-constant of the form
$$J_{k+1}(x_{k+1})=c_{k+1}'x_{k+1}+d_{k+1},$$
where $c_{k+1}$ is a vector in $\re^n$ and $d_{k+1}$ is a scalar, it can be seen with a straighforward calculation that 
$$J_{k}(x_{k})=c_{k}'x_{k}+d_{k},$$
where
$$c_k=q_k+{E\{A_{k}\}'c_{k+1}},$$
$$d_k=\min_{u_k\in U_k}\Big[g_k(u_k)+c_{k+1}'f_k(u_k)\Big]+{c'_{k+1}E\{w_{k}\}}+d_{k+1}.$$
This confirms that the class of linear-plus-constant cost functions is closed under value iteration (cf.\ the class of functions $\widehat{\cal J}$), and  has the structure that underlies the methodology of this paper. Moreover, certainty equivalence holds in the sense that the optimal policy is the same as for the deterministic problem, where the random quantities $A_k$ and $w_k$ are replaced with their expected values $E\{A_{k}\}$ and $E\{w_{k}\}$. Here the optimal policy is obtained from the preceding minimization, and is independent of the initial state and the generated state sequence (cf.\ the class of policies  $\widehat {\cal M}$).
\end{example}

The purpose of this paper is to present a new analysis of a class of infinite horizon semilinear problems, which is qualitatively similar but far more challenging than the one for the preceding finite horizon problem. We focus primarily on the case of stationary $n$-dimensional positive semilinear systems that involve a nonnegative matrix $A$ and cost vector $q$, and conditions that ensure that the state $x_k$ is confined to the positive orthant of $\re^n$. These assumptions (to be spelled out more precisely in subsequent sections) bring to bear the theory of monotone increasing and affine monotonic problems of infinite horizon DP \cite{bertsekas1975monotone,bertsekas1977monotone,bertsekas2019affine}. Among others, our results relate to the methodology of positive linear systems that has been pioneered by Rantzer and his co-workers \cite{rantzer2022explicit,li2024exact,ohlin2023optimal,ohlin2024heuristic,bencherki2024data}. They are also related to the optimal control of continuous-time compartmental systems studied by Blanchini et al. \cite{blanchini2023optimal}. Our emphasis here is different: we develop a discrete-time semilinear DP framework, together with stochastic and Markov jump extensions and associated certainty equivalence results.

The paper is organized as follows. In Section~\ref{sec:dete_formulation}, we formulate a deterministic semilinear DP problem, which will be the starting point for the subsequent analysis and extensions. In Section~\ref{sec:bellman_dete}, we study the solution properties of the corresponding Bellman's equation. In Section~\ref{sec:algo_dete}, we provide computational approaches to solve the deterministic problem. In Section~\ref{sec:stochastic}, we introduce a stochastic extension that involves multiplicative stochastic parameters, which are independent across stages, and we demonstrate the certainty equivalence principle for this problem. In Section~\ref{sec:markov}, we study the case where the stochastic parameters of different stages evolve according to a Markov chain.  

\subsection*{Notation}
We denote by $\Re$ the real line and by $\Re^n$ the set of $n$-dimensional vectors. The set of vectors of $\Re^n$ that have nonnegative components (the positive orthant) is denoted by $\Re^n_+$. All vectors are meant to be column vectors, and a prime denotes transposition, so the inner product of two vectors $x$ and $y$ in $\Re^n$ is denoted by $x'y$. We use $E\{\cdot\}$ to denote expected value. The random quantities with respect to which the expectation is taken will be either listed below the symbol $E$ or will be clear from the context. We use similar notation for conditional expected value. All inequalities involving vectors and  functions are meant to be pointwise. In particular, for a vector $x$ we write $x\ge 0$ (or $x>0$) if all the components of $x$ are nonnegative (strictly positive, respectively). Moreover, for any two functions $J,\hat{J}:X\mapsto\re$, where $X$ is some set, we write $J\ge \hat{J}$ if $J(x)\ge \hat{J}(x)$ for all $x\in X$.

\section{Deterministic Positive Semilinear Problems Over an Infinite Horizon}\label{sec:dete_formulation}

In this section, we will introduce an infinite horizon deterministic stationary semilinear DP problem, which will also be the starting point of our analysis in subsequent sections. Here the state space is a subset $X$ of $\re^n_+$, the control space is denoted by $U$, and the control is constrained to lie in a given nonempty subset $U(x)\subset U$ that may depend on $x$. 
Given some $x_0\in X$, our  problem is 
\begin{equation}
    \label{eq:proq_dete}
    \begin{aligned}
        \min_{\{u_k\}_{k=0}^{\infty}}& \quad \sum_{k=0}^{\infty} \alpha^k g(x_k,u_k)\\
	\mathrm{s.\,t.} & \quad x_{k+1}=f(x_k,u_k),\quad k = 0,1,\ldots,\\
	& \quad u_k\in U(x_k),\quad k = 0,1,\ldots,
    \end{aligned}
\end{equation}
where $f:X\times U\mapsto \re^n$ and $g:X\times U\mapsto \re$ are the system function and cost per stage, respectively, and $\alpha\in (0,1]$ is a given scalar. In particular, we require that $f(x,u)\in X$ for all $x$ and $u\in U(x)$.

The preceding problem formulation is very general and in this paper we treat only some special cases of this problem. Some important examples are discrete-time positive bilinear systems closely related to the continuous-time compartmental control problems studied in \cite{blanchini2023optimal}, a special form of switching system, a type of Markov decision problem with positive stage costs, and positive linear systems with constraints coupled over control components \cite{li2024exact}. We describe some of these problems in the subsequent examples. It is convenient to describe the system equation $f$ and stage cost $g$ in general terms in view of the variety of the system equations and stage costs addressed by our methodology. Moreover, the problem formulation \eqref{eq:proq_dete} also extends naturally to the stochastic and Markov jump problems that we study in Sections~\ref{sec:stochastic} and \ref{sec:markov}.

A key assumption is that the cost per stage $g$ is nonnegative:
$$g(x,u)\ge0,\qquad \hbox{for all } x\in X, u\in U(x).$$
As a result, the problem can be analyzed using the general theory of nonnegative cost DP problems, which among others asserts that the search for an optimal policy can be confined to stationary policies, i.e., functions $\m$ from states to controls, such that $\m(x)\in U(x)$ for all $x\in X$ (see Appendix~\ref{app:a}).

Consistent with the discussion of the preceding section, we will focus on semilinear-type problems, involving a structured set of cost functions $\widehat{\cal J}$ and a corresponding subset of stationary policies $\widehat {\cal M}$. In particular, $\widehat{\cal J}$ consists of nonnegative linear functions $J(x)=c'x$, where $c\ge0$, and $\widehat {\cal M}$ consists of policies $\m$ for which there exists an $n\times n$ nonnegative matrix $A_\m$, and $n$-dimensional vectors $q_\m$ such that
\begin{equation}
    \label{eq:semil_cond}
    f\big(x,\m(x)\big)=A_\m x,\qquad g\big(x,\m(x)\big)=q_\m'x.
\end{equation}
Several interesting examples where this structure arises will be given in what follows.

We make the following standing assumptions, which will hold throughout 
Sections~\ref{sec:dete_formulation}-\ref{sec:algo_dete}. These assumptions define the class of problems to which our framework applies. The subsequent examples illustrate important cases where it can be verified.

\begin{assumption}\label{asm:dete_well}
\begin{itemize}
    \item[(a)] \emph{Closure and Attainability}: The set of nonnegative linear functions $\widehat{\cal J}$ is closed under VI in the sense that 
for every $c\in\re^n_+$, the function
$$\min_{u\in U(x)}\Big[g(x,u)+\alpha c'f(x,u)\Big]$$
belongs to  $\widehat{\cal J}$, i.e., it has the form $\hat c'x$ for some unique $\hat c\ge0$. Furthermore, $\hat c$ depends continuously on $c$. Moreover, there is a policy $\m\in\widehat {\cal M}$ that attains the minimum above, in the sense that
\begin{equation}
    \label{eq:common_policy_min}
    \m(x)\in\arg\min_{u\in U(x)}\Big[g(x,u)+\alpha c'f(x,u)\Big],\qquad \text{for all }x\in X.
\end{equation}
\item[(b)] \emph{Stabilizability}: There exists a policy $\hat{\mu}\in \widehat {\cal M}$ such that $\alpha A_{\hat{\mu}}$ is stable, in the sense that all its eigenvalues lie strictly within the unit circle.
\item[(c)] \emph{State Space Structure}: The state space $X$ has the property that for all $v\in \Re_+^n$, there exists an $x\in X$ and a scalar $s\geq0$ such that $sx= v$ (this is true in particular if $X=\re^n_+$).  
\item[(d)] \emph{Observability}: There exists an integer $N$ such that the optimal cost of the $N$-stage version of the problem [the problem of minimizing the cost $\sum_{k=0}^{N-1}\alpha^kg(x_k,u_k)$, starting from any nonzero initial state $x_0\in X$] is strictly positive.
 \end{itemize}
\end{assumption}

Part (a) above is the principal assumption and defines the semilinear character of the problem. Parts (b)-(d) are technical assumptions, whose significance will become clear from the analysis that follows. In particular, part (d) is called an \emph{observability} assumption because, in analogy with the standard notion of observability in control theory, it requires that every nonzero state leaves a detectable ``signature” in the cost: starting from any nonzero state, a strictly positive cost must be incurred within a finite horizon.

Note that from part (a) of Assumption~\ref{asm:dete_well} and Eqs.~\eqref{eq:semil_cond}, \eqref{eq:common_policy_min}, we have
   \begin{equation}
   \label{eq:dete_parta_lin}
       \hat c'x=G_\mu(c)'x=\min_{u\in U(x)}\big[g(x,u)+\alpha c'f(x,u)\big],\qquad \text{for all }x\in X,
   \end{equation}
    where $\m\in\widehat {\cal M}$ is the policy that attains the minimum in Eq.~\eqref{eq:common_policy_min}, and $G_\mu:\Re_+^n\mapsto\Re_+^n$ denotes the mapping 
    \begin{equation}
        \label{eq:g_mu_def}
        G_\mu(c)=q_\mu +\alpha A_\mu'c.
    \end{equation}
Note also that the VI algorithm applied to functions $J\in \widehat{\cal J}$ of the form $J(x)=c'x$ defines uniquely a function $G:\Re^n_+\mapsto\Re^n_+$ through the equation 
    \begin{equation}
        \label{eq:bellman_op_dete}
        G(c)'x=\min_{u\in U(x)}\big[g(x,u)+\alpha c'f(x,u)\big]=\min_{\mu\in\widehat {\cal M}}{G_{\mu}(c)'x},\qquad \text{for all }x\in X,
    \end{equation}
cf. Eq.~\eqref{eq:dete_parta_lin}. We can view this as the Bellman equation of the problem, restricted to the class of functions $\widehat{\cal J}$. Thus, the present framework is tailored to problems for which the optimal cost function belongs to the class $\widehat{\cal J}$ of nonnegative linear functions, and in the subsequent analysis we show that this property indeed holds under Assumption~\ref{asm:dete_well}.

There are many practical problems that fall into the framework considered here. In what follows, we provide a few examples where part (a) of Assumption~\ref{asm:dete_well} is satisfied. The set of policies $\widehat{\cal M}$ that forms a structured pair with $\widehat{\cal J}$, as in Assumption~\ref{asm:dete_well}(a), will be specified for each example.

\begin{example}[Control of Positive Bilinear Systems]\label{eg:bilinear}
Consider the case where the state $x$ consists of $n$ scalar components $x^1,\ldots,x^n$, and the control $u$ consists of $m$ scalar components $u^1,\ldots,u^m$. We assume that each component $u^i$ is constrained within a subset $U^i$.\footnote{In a more general formulation, each $u^i$ can also be a vector.} The state equation is
\begin{equation}
    \label{eq:bilinear}
    x_{k+1}=Ax_k+\big[f_1(u_k^1)\;f_2(u_k^2)\;\dots\; f_m(u_k^m)\big] Bx_k,
\end{equation}
where $A$ is an $n\times n$ nonnegative matrix, $f_i:U^i\mapsto \re^n_+$, $i=1,\dots,m$, are given functions, and $B$ is an $m\times n$ nonnegative matrix. The cost at stage $k$ is
$$q'x_k+\big[g_1(u^1)\;g_2(u^2)\;\dots\;g_m(u^m)\big]Bx_k,$$
where $q\in \re^n_+$ and $g_i:U^i\mapsto \Re_+$ are given vector and function, respectively. Note that  we allow that for some $i$, $f_i$ and $g_i$ are identically 0, thus eliminating the corresponding control components $u^i$. In this way we can model the case where the number of control components is smaller than $m$.

{A special case of Eq.~\eqref{eq:bilinear} is when $f_i(u^i)=u^iv_i$, with $v_i$ being some nonnegative vectors. In the literature, this form of the state equation is called a \emph{bilinear system}, where ``bilinear'' refers to the product terms $u^ix^j$ in Eq.~\eqref{eq:bilinear}.} Bilinear systems with nonnegative state variables have been used to address a variety of problems in medicine \cite{ledzewicz2002optimal}, biochemistry \cite{banks1975nonlinear}, and macroeconomics \cite{d1975bilinearity}. Moreover, a special form of positive bilinear system, known as a \emph{compartmental system}, has proved to be effective in the analysis and the control of infection
dynamics \cite{brauer2012mathematical,martcheva2015introduction,sharomi2017optimal}. The theory of optimal control of compartmental systems was recently developed in \cite{blanchini2023optimal} for a related continuous-time model with bilinear dependence on the control components and bounded interval constraints. The present example may be viewed as a discrete-time analogue that also allows more general nonlinear functions $f_i$ and $g_i$. For monographs focused on bilinear systems, see, e.g., \cite{mohler1973bilinear} and \cite{elliott2009bilinear}.  

We define $\widehat{\cal M}$ as the special set of policies $\mu$ such that the $\mu(x)$ is the same for all $x\in X$. Suppose that $\mu\in \widehat{\cal M}$ is applied and that the $i$th component of $\mu(x)$ is $u^i$, then it can be seen that the state equation and the cost at stage $k$ are $x_{k+1}=A_\mu x_k$ and $q_\mu'x_k$ respectively, where
$$A_\mu=A+\big[f_1(u^1)\;f_2(u^2)\;\dots\;f_m(u^m)\big]B,\; q_\mu'=q'+\big[g_1(u^1)\;g_2(u^2)\;\dots\;g_m(u^m)\big]B,$$
so that conditions given in Eq.~\eqref{eq:semil_cond} are satisfied. 

Let us now apply the VI algorithm starting with $J(x)=c'x$, $c\ge0$. It produces the function
$$\hat J(x)=(q+A'c)'x+\min_{\substack{u^i\in U^i\\i=1,\dots,m}}\Big[\big(g_1(u^1)+c' f_1(u^1)\big)\;\dots\;\big(g_m(u^m)+c' f_m(u^m)\big)\Big]Bx.$$
Since $Bx$ is nonnegative, it follows that the corresponding minimizing control is the same for all $x$, or equivalently, the minimizing policy $\mu$ belongs to $\widehat{\cal M}$. Moreover, we have $\hat J(x)=\hat c'x$ with
$$\hat c=q+A'c+D(c),$$
where $D(c)$ is the vector
$$D(c)=B'\big[d_1(c)\;d_2(c)\;\dots\;d_m(c)\big]',$$
with 
$$d_i(c)=\min_{u^i\in U^i}\big[g_i(u^i)+c'f_i(u^i)\big], \qquad i=1,\ldots,m.$$
Thus $\hat J\in \widehat{\cal J}$ and it can be seen that all the conditions of Assumption~\ref{asm:dete_well}(a) are satisfied.

\end{example}

\begin{example}[Control of Column Switching Systems]\label{eg:switch}
{Let us consider the special case of Example~\ref{eg:bilinear} with $m=n$ and $B$ equal to the identity matrix. The state equation \eqref{eq:bilinear} then simplifies to
\begin{equation}
    \label{eq:switch}
    x_{k+1}=Ax_k+\sum_{i=1}^nf_i(u_k^i)x_k^i,
\end{equation}
with stage cost
$$q'x_k+\sum_{i=1}^ng_i(u_k^i)x_k^i.$$

Problems of this form arise in several contexts, including those where the state represents a probability distribution, which we will discuss in Example~\ref{eg:mdp}. Equation~\eqref{eq:switch} also suggests a connection with the optimal control of switched systems, since the minimization over controls resembles selecting a switching action at each stage. However, our formulation applies only when the column vectors $f_i(u_k^i)$, multiplied by the individual components $x_k^i$, can be switched independently, and does not extend to general switched systems. In the latter case Assumption~\ref{asm:dete_well}(a) may fail, so Bellman’s equation may admit no solution within the class of linear functions. Thus, while there is a conceptual link, our framework does not encompass the broader class of positive switched linear systems studied in the literature (see, e.g., \cite{blanchini2008set,fornasini2011stability,blanchini2015switched,rantzer2015scalable}).}
\end{example}

\begin{example}[Positive Linear Systems with Control Constraints]\label{eg:pos_lin}
    Consider a problem where the state equation is 
    $$x_{k+1}=Ax_k+Bu_k,$$
    with $A$ and $B$ being $n\times n$ and $n\times m$ matrices, respectively. The cost of stage $k$ is
    $$q'x_k+r'u_k,$$
    with $q$ and $r$ being vectors in  $\re^n_+$ and $\re^m$, respectively. For every state $x$, the control $u$ is selected from the set 
    $$U(x)=\{u\in\re^m\,|\,|u|\leq Hx\},$$
    where $|u|$ is the vector whose components are the absolute values of the components of $u$, and $H$ is a given $m\times n$ matrix. Moreover, suitable assumptions are made regarding $A$, $B$, and $H$ so that $x_{k+1}$ remains in $\re^n_+$ regardless of the value of $x_k\in\re^n_+$ and the choice of $u_k\in U(x_k)$. 
    
    This problem was first studied by Rantzer \cite{rantzer2022explicit}, who showed that the optimal cost function can be obtained by using linear programming. The DP methodology for this problem, and another closely related problem, was developed by Li and Rantzer \cite{li2024exact}. Subsequently, this problem and some of its variants have been studied by Rantzer, Ohlin, Tegling, Gurpegui, Pates, Jeeninga, and Bencherki \cite{ohlin2023optimal,gurpegui2023minimax,pates2024optimal,ohlin2024heuristic,bencherki2024data,gurpegui2024minimax}.

   Let us define the set $\widehat{\cal M}$ as the set of linear policies:
    $$\widehat{\cal M}=\big\{\mu\,|\,\mu(x)=Lx,\hbox{ where $L$ is an $n\times m$ matrix and }|L|\leq H\big\},$$
    where $|L|$ is the $n\times m$ matrix whose components are the absolute values of the components of $L$. It can be seen that if $\mu\in \widehat{\cal M}$ so that $\mu(x)=Lx$ for some $L$, then $\mu(x)\in U(x)$ for all $x$. Moreover, the state equation and the cost at stage $k$ under the policy $\mu$ are $x_{k+1}=A_\mu x_k$ and $q_\mu'x_k$ respectively, where
    $$A_\mu=A+BL,\quad q_\mu=q+L'r.$$

  Starting with $J(x)=c'x$, $c\ge0$, the VI algorithm produces the function
    \begin{equation}
    \label{eq:bellman_pos_lin}
        \hat{J}(x)=(q+A'c)x+\min_{|u|\leq Hx}(r+B'c)'u.
    \end{equation}
    Let us denote by $b_i$ the $i$th column of $B$ and by $h_i'$ the $i$th row of $H$. It can be seen that the minimum in Eq.~\eqref{eq:bellman_pos_lin} is attained at $Lx$, where
    $$L=-\begin{bmatrix}
\sign(r_1+b_1'c)h_1'\\
\vdots\\
\sign(r_m+b_m'c)h_m',
\end{bmatrix}$$
and $\sign(\cdot)$ is the function that takes the value $1$ if its argument is nonnegative and $-1$ otherwise.
    As a result, we have $\hat{J}(x)=\hat{c}'x$, where
    $$\hat{c}=q+L'r+(A+BL)'c.$$
    Therefore, starting with a linear function $J\in \widehat{\cal J}$, VI produces another linear function $\hat{J}\in \widehat{\cal J}$, and it can be seen that all the conditions of Assumption~\ref{asm:dete_well}(a) are satisfied.
\end{example}

\begin{example}[Markov Decision Problems with Distributions as States]\label{eg:mdp}
Consider the case where each state is a probability distribution over a finite set that consists of $n$ points. Thus, each state $x$ is a column vector consisting of $n$ scalar components $x^1,\dots,x^n$, where $x^i$ is the probability of point $i$. Each control $u$ also has $n$ scalar components $u^1,\dots,u^n$, where each $u^i$ is chosen from a subset $U^i$. Given the current state $x_k$, the state at time $k+1$ is given by  
\begin{equation}
    \label{eq:mdp}
    x_{k+1}=\sum_{i=1}^np_i(u^i_k)x^i_k,
\end{equation}
where the function $p_i$ maps each $u^i$ to a probability distribution. Given an initial distribution $x_0$, the objective is to minimize the total cost
$$\sum_{k=0}^\infty \alpha^k \sum_{i=1}^n g_i(u_k^i)x_k^i,$$
where $\alpha\in (0,1)$, $g_i:U^i\mapsto \re_+$, $i=1,\dots,n$, and $x_k$ evolves according to the state equation \eqref{eq:mdp}. 

Let us consider the set $\widehat{\cal M}$ that consists of all policies $\mu$ such that $\mu(x)$ is the same for all $x\in X$. When applying a policy $\mu\in \widehat{\cal M}$ such that the $i$th component of $\mu(x)$ is $u^i$, the state equation and the cost at stage $k$ are $x_{k+1}=A_\mu x_k$ and $q_\mu'x_k$ respectively, where
$$A_\mu=\big[p_1(u^1)\;p_2(u^2)\;\dots\;p_n(u^n)\big],\quad q_\mu'=\big[g_1(u^1)\;g_2(u^2)\;\dots\;g_n(u^n)\big],$$
so that the conditions of Eq.~\eqref{eq:semil_cond} are satisfied. 

Now we apply the VI algorithm starting with $J(x)=c'x$, where $c$ is nonnegative. We obtain a new function
$$\hat{J}(x)= \sum_{i=1}^n\min_{u^i\in U^i}[g_i(u^i)x^i+\alpha c' p_i(u^i)]x^i.$$
Using a derivation similar to that of Example~\ref{eg:bilinear}, we can show that $\hat{J}(x)=\hat{c}'x$ for some $\hat{c}\in \re^n_+$.

Systems whose states are probability distributions arise in partially observed Markov decision problems (POMDP), and other interesting contexts in DP. For example, Gao et al. \cite{gao2023distributional} studied the evolution of distributions over time using a DP formulation. In another theoretically interesting context, the measurability issues in stochastic optimal control were addressed by Bertsekas and Shreve (\cite[Chapter~9]{bertsekas1978stochastic}) using a Markovian decision framework, where states were modeled by probability distributions.The present example is a direct application of the approach in \cite[Chapter~9]{bertsekas1978stochastic}. In the same spirit, the semilinear DP framework extends naturally to stochastic shortest path problems with nonnegative costs, formulated with probability distributions as states, and to their special cases such as \cite{todorov2006linearly}. 
\end{example}

To set the stage for our analysis, we will now state some results that hold for the general nonnegative cost deterministic problem \eqref{eq:proq_dete},  even without Assumption~\ref{asm:dete_well}. Formal statements of these results, for the broader context of stochastic problems, are provided in Appendix~\ref{app:a}.

We denote by $J^*(x)$ the optimal cost starting from a state $x\in X$. We know that $J^*$ is a solution of Bellman's equation, which takes the form
\begin{equation}
    \label{eq:bellman_dete_j}
    J(x)=\min_{u\in U(x)}\Big[g(x,u)+\alpha J\big(f(x,u)\big)\Big],\qquad x\in X.
\end{equation}
For a given policy $\mu$, we denote by $J_{\mu}(x_0)$ the cost starting from a state $x_0\in X$ and using $\mu$, i.e., 
\begin{equation}
    \label{eq:j_mu_def}
    J_\mu(x_0)=\sum_{k=0}^{\infty} \alpha^k g\big(x_k,\mu(x_k)\big)\quad \text{for all }x_0,
\end{equation}
where $x_{k+1}=f\big(x_k,\mu(x_k)\big)$, $k=0,1,\dots$. Similar to $J^*$, $J_{\mu}$ is a solution of the corresponding Bellman's equation for policy $\mu$, 
\begin{equation}
    \label{eq:bellman_dete_mu}
    J(x)=g\big(x,\mu(x)\big)+\alpha J\Big(f\big(x,\mu(x)\big)\Big),\qquad x\in X.
\end{equation} 
We say that a policy $\mu^*$ is \emph{optimal} if $J_{\mu^*}(x)=J^*(x)$ for all $x$. It is well known that $\mu^*$ is optimal if and only if $\mu^*(x)$ attains the minimum in Eq. \eqref{eq:bellman_dete_j} for all $x\in X$, with $J^*$ in place of $J$. Note that for the policy $\hat\mu$ that satisfies Assumption~\ref{asm:dete_well}(b) ($\alpha A_{\hat \mu}$ is stable), we have
\begin{equation}
    \label{eq:boundedness}
J^*(x)\le J_{\hat\mu}(x)<\infty,\qquad \hbox{for all }x\in X.
\end{equation}
Our analysis of the next two sections will revolve around the uniqueness of solution of  Bellman's equation, the existence of optimal policies within the class $\widehat{\cal M}$, and the convergence properties of the VI and PI algorithms. 

\section{Bellman's Equation and Optimal Policies}\label{sec:bellman_dete}
Our analysis will be based primarily on the VI algorithm, which takes the form
\begin{equation}
    \label{eq:vi_dete}
    J_{k+1}(x)=\min_{u\in U(x)}\Big[g(x,u)+\alpha J_{k}\big(f(x,u) \big)\Big],\qquad k=0,1,\ldots,
\end{equation}
starting from some initial nonnegative function $J_0$. Its convergence properties are summarized in Appendix~\ref{app:a}.

When the VI algorithm is specialized to our problem starting with $J_0(x)=c_0'x$ with $c_0\ge0$, it takes the form 
$$J_{k+1}(x)=q_\mu' x+\alpha c_k'A_\mu x=(q_\mu+\alpha A_\mu'c_k)'x,\qquad \text{for all }x\in X,$$
for some $\mu\in \widehat{\cal M}$ [cf.\ Assumption~\ref{asm:dete_well}(a)]. Equivalently, we have 
$$J_{k+1}(x)=G(c_k)'x,$$
where $G$ is uniquely defined via
$$G(c)'x=\min_{\mu\in \widehat{\cal M}}G_\mu(c)'x,\quad \text{for all }x\in X,$$
cf. Eq. \eqref{eq:bellman_op_dete}. As a result, we have that $J_{k+1}(x)=c_{k+1}'x$ with $c_{k+1}=G(c_k)$. 

Our analysis relies on a key \emph{monotonicity} property of the functions  $G$ and $G_\mu$, $\mu\in \widehat{\cal M}$, which states that if two vectors $c,\Bar{c}\in \re_+^n$ satisfy $c\leq \Bar{c}$, then
$$G(c)\leq G(\Bar{c}), \qquad G_\mu(c)\leq G_\mu(\Bar{c}),\ \ \text{for all }\mu\in \widehat{\cal M}.$$
The monotonicity of $G_\mu$ follows from its definition $G_\mu(c)=q_\mu+\alpha A_\mu'c$ [cf. Eq.~\eqref{eq:g_mu_def}] and the fact that $A_\mu$ is nonnegative. To see the monotonicity of $G$, we first note that 
\begin{equation}
    \label{eq:monotone_g_element}
    G(c)'x=\min_{\mu\in \widehat{\cal M}}G_\mu(c)'x\leq \min_{\mu\in \widehat{\cal M}}G_\mu(\Bar{c})'x=G(\Bar{c})'x,\quad \text{for all }x\in X,
\end{equation}
where the inequality follows from the monotonicity of $G_\mu$ for all $\mu\in \widehat{\cal M}$. Assumption~\ref{asm:dete_well}(c) implies that for every $i=1,2,\dots,n$, there exists some $x\in X$ with its $i$th component being positive and all the other components being zero. As a result, using  condition \eqref{eq:monotone_g_element}, we have $G(c)\leq G(\Bar{c})$, showing the monotonicity property of $G$.

Another property of $G$ that we will use is that for all $c\in \re_+^n$, 
\begin{equation}
    \label{eq:g_le_gm}
    G(c)\leq G_\mu(c),\qquad \hbox{for all }\mu\in \widehat{\cal M}.
\end{equation}
This inequality follows from Eq.~\eqref{eq:monotone_g_element}, by  considering states $x\in X$ with only one component being nonzero.

We will now use the VI algorithm to establish the linearity and the uniqueness of solution of Bellman's equation. 

\begin{proposition}\label{prop:unique_dete}
There exists a positive vector $c^*\in \Re^n_+$ such that $J^*(x)=(c^*)'x$ for all $x\in X$. Moreover, $c^*$ is the unique vector within $\Re_+^n$ that satisfies
    \begin{equation}
        \label{eq:bellman_dete}
        c^*=G(c^*).
    \end{equation}    
\end{proposition}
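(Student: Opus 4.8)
The plan is to run the VI algorithm from a well-chosen linear starting function and show it converges to a linear function that solves the Bellman equation, then establish uniqueness separately via monotonicity. First I would use Assumption~\ref{asm:dete_well}(b): the policy $\hat\mu\in\widehat{\cal M}$ with $\alpha A_{\hat\mu}$ stable has cost function $J_{\hat\mu}(x)=c_{\hat\mu}'x$, where $c_{\hat\mu}=(I-\alpha A_{\hat\mu}')^{-1}q_{\hat\mu}\ge 0$ is well defined because $\alpha A_{\hat\mu}$ stable and nonnegative implies $(I-\alpha A_{\hat\mu}')^{-1}=\sum_{k\ge 0}(\alpha A_{\hat\mu}')^k$ converges with nonnegative entries. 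Since $J_{\hat\mu}$ solves the Bellman equation for $\hat\mu$ [cf.\ Eq.~\eqref{eq:bellman_dete_mu}], we have $c_{\hat\mu}=G_{\hat\mu}(c_{\hat\mu})\ge G(c_{\hat\mu})$ by Eq.~\eqref{eq:g_le_gm}. Now start VI at $c_0=c_{\hat\mu}$: by monotonicity of $G$, the sequence $c_{k+1}=G(c_k)$ is nonincreasing and bounded below by $0$, hence converges to some $c^*\ge 0$; by continuity of $G$ (which follows from the continuity of $\hat c$ on $c$ in Assumption~\ref{asm:dete_well}(a)), $c^*=G(c^*)$, so $\hat J(x):=(c^*)'x$ solves the Bellman equation \eqref{eq:bellman_dete_j} restricted to $\widehat{\cal J}$.

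Next I would identify $(c^*)'x$ with $J^*(x)$. Since each VI iterate satisfies $J_k(x)=c_k'x$ and $c_k\downarrow c^*$, we have $\lim_k J_k(x)=(c^*)'x$ for the VI sequence started at $J_0=J_{\hat\mu}$; but the general nonnegative-cost DP theory (Appendix~\ref{app:a}, via Eq.~\eqref{eq:boundedness} giving a bounded-above starting point, i.e.\ $0\le J_{\hat\mu}<\infty$ and $J^*\le J_{\hat\mu}$) guarantees that VI started from any such function converges to $J^*$. Hence $J^*(x)=(c^*)'x$ for all $x\in X$. Positivity of $c^*$ then follows from Assumption~\ref{asm:dete_well}(d): if some component $c^*_i=0$, pick (using Assumption~\ref{asm:dete_well}(c)) a state $x\in X$ with only the $i$th component nonzero; then the $N$-stage optimal cost from $x$ is at most $J^*(x)=(c^*)'x=0$, contradicting strict positivity. (A small technical point to handle: relating the $N$-stage optimal cost to $J^*$ — since costs are nonnegative, the $N$-stage optimal cost is $\le J^*(x)$, which is all that is needed.)

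For uniqueness, suppose $\bar c\in\Re^n_+$ also satisfies $\bar c=G(\bar c)$. Then $(\bar c)'x$ solves the Bellman equation restricted to $\widehat{\cal J}$, and applying VI from $c_0=\bar c$ keeps the iterates at $\bar c$ forever; I would compare this with convergence to $c^*$. The clean argument: pick a scalar $\gamma\ge 1$ large enough that $\gamma c^* \ge \bar c$ componentwise — possible since $c^*>0$. Because $G_{\hat\mu}$ is affine with the stable map $\alpha A_{\hat\mu}'$ and $G\le G_{\hat\mu}$, one shows by a contraction/monotonicity estimate that the VI iterates from $\gamma c^*$ converge to $c^*$, while by monotonicity they dominate the iterates from $\bar c$, which are constantly $\bar c$; hence $\bar c\le c^*$. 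Symmetrically, choosing $\gamma' \ge 1$ with $\gamma'\bar c\ge c^*$ (possible only if $\bar c>0$, which one first argues as for $c^*$, again using Assumption~\ref{asm:dete_well}(d)) gives $c^*\le\bar c$, so $\bar c=c^*$.

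The main obstacle I anticipate is the uniqueness step: the restricted Bellman operator $G$ is not a priori a contraction on all of $\Re^n_+$ — only the single policy $\hat\mu$ is assumed stable, not every policy in $\widehat{\cal M}$ — so I cannot simply invoke Banach fixed point. The key idea to make uniqueness work is to exploit that $G\le G_{\hat\mu}$ together with the stability of $\alpha A_{\hat\mu}$ to "sandwich" any competing fixed point between scaled copies of $c^*$ and drive the gap to zero along VI, using Assumption~\ref{asm:dete_well}(d) to rule out degenerate (boundary) fixed points that the scaling argument cannot reach.
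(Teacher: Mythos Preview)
Your identification step contains a genuine gap. You claim that ``the general nonnegative-cost DP theory \ldots\ guarantees that VI started from any such function converges to $J^*$'' when the starting point $J_0=J_{\hat\mu}$ satisfies $J^*\le J_{\hat\mu}<\infty$. This is \emph{not} a standard result for nonnegative-cost problems: VI started from above $J^*$ can converge to a fixed point of the Bellman operator strictly larger than $J^*$. The only tool in Appendix~\ref{app:a} that gives convergence from above is Prop.~\ref{prop:classical}(e), and it requires $J_0\le sJ^*$ for some finite $s$; verifying this for $J_{\hat\mu}=c_{\hat\mu}'x$ would require already knowing that $J^*$ is linear with a strictly positive coefficient vector, which is precisely what you are trying to establish. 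Running VI from $c_{\hat\mu}$ downward therefore only yields a fixed point $c^*$ with $(c^*)'x\ge J^*(x)$ (both from Prop.~\ref{prop:classical}(a) and from monotonicity of VI), and you have no argument for the reverse inequality. The paper avoids this circularity by running VI from $c_0=0$: the iterates $c_k$ increase, are bounded above by $J^*$, and converge to a fixed point $c^*$ with $(c^*)'x\le J^*(x)$; the reverse inequality then follows from Prop.~\ref{prop:classical}(a).

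Your uniqueness argument is also incomplete. The claim that ``the VI iterates from $\gamma c^*$ converge to $c^*$'' via the bound $G\le G_{\hat\mu}$ and stability of $\alpha A_{\hat\mu}$ does not close: that bound only gives $G^k(\gamma c^*)\le G_{\hat\mu}^k(\gamma c^*)\to c_{\hat\mu}$, not convergence to $c^*$, and a decreasing sequence of VI iterates from $\gamma c^*$ could in principle stall at a fixed point strictly above $c^*$. The paper's route is cleaner: once $J^*=(c^*)'x$ with $c^*>0$ is established, any competing fixed point $\bar c$ automatically satisfies $c^*\le \bar c$ (by Prop.~\ref{prop:classical}(a)) and $\bar c\le s c^*$ for some $s>1$ (since $c^*>0$), so Prop.~\ref{prop:classical}(e) applies directly to $J_0=\bar c'x$; but VI from a fixed point is constant, forcing $\bar c=c^*$.
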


\begin{proof}
Our proof will proceed in two steps. First, we will consider the sequence of functions $\{J_k\}$ generated by VI, starting with the function $J_0(x)\equiv0$. Based on parts (a)-(c) of Assumption~\ref{asm:dete_well}, particularly the monotonicity and continuity of the function $G$, we will show that the sequence $\{J_k\}$ is composed of linear functions $J_k(x)=c_k'x$, so that $J_k\in \widehat{\cal J}$ and $c_{k+1}=G(c_k)$. Moreover, we will show that $\{c_k\}$ converges and its limit, denoted by $c^*$, defines the optimal cost function, i.e., $J^*(x)=(c^*)'x$ for all $x$. Finally, using Assumption~\ref{asm:dete_well}(d), we will show that $c^*$ is the unique solution of the equation $c=G(c)$ within $\re_+^n$, and that $c^*>0$.  

Consider the sequence of functions $\{J_k\}$ generated by VI [cf. Eq.~\eqref{eq:vi_dete}] with $J_0\equiv0$. As noted above, we have that $J_k(x)=c_k'x$ for $c_k\in \Re_+^n$, $c_{k+1}=G(c_k)$, and $c_0=0$; cf. Eq.~\eqref{eq:bellman_op_dete}. Since $J_0(x)\leq J^*(x)$ for all $x$, applying VI on both sides, and using the fact that $J^*$ is a solution of Bellman's equation, we have $J_1(x)\leq J^*(x)$ for all $x$, and similarly by induction, $c_k'x=J_k(x)\leq J^*(x)$ for all $k$ and $x$. Since $J^*(x)\leq J_{\hat \mu}(x)<\infty$, where $\hat\mu$ is the policy of Assumption~\ref{asm:dete_well}(b), the sequence $\{c_k\}$ is upper-bounded. Moreover, it is monotonically increasing since $G$ is monotone. Therefore, $\{c_k\}$ converges to a vector in $\re^n_+$, which we denote by $c^*$. Taking limit on both sides of the equation $c_{k+1}=G(c_k)$, and using our assumption that $G(c)$ depends continuously on $c$, we have $c^*=G(c^*)$, or equivalently,
$$J_\infty(x)=\inf_{u\in U(x)}\big\{g(x,u)+\alpha J_\infty\big(f(x,u)\big)\big\},$$
where $J_\infty(x)=(c^*)'x$. Thus the nonnegative function $J_\infty$ is a solution to Bellman's equation, and we have $J_\infty\leq J^*$. On the other hand, it is known that every nonnegative function that solves Bellman's equation is lower-bounded by $J^*$; see Prop.~\ref{prop:classical}(a) in Appendix~\ref{app:a}. Therefore, we have $J_\infty\geq J^*$, and it follows that $J_\infty=J^*$, or equivalently, $J^*(x)=(c^*)'x$ for all $x$.

To show the uniqueness part, let $\Bar{c}\ge0$ be a solution to the equation $c=G(c)$. Then the function $\Bar{c}'x$ also solves Bellman's equation. Since $J^*(x)=(c^*)'x$ is a lower bound to all solutions of Bellman's equation, using Assumption~\ref{asm:dete_well}(c), we have that $c^*\leq \Bar{c}$. 

Next, we claim that $c^*>0$. Indeed, $N$ iterations 
 of the VI algorithm starting from 0 produce the cost function of the $N$-stage problem, which is a lower bound to $J^*$, and  by  Assumption~\ref{asm:dete_well}(d), is positive for all $x\ne0$. Hence we have $J^*(x)=(c^*)'x>0$ for all $x\ne0$.  By considering states $x$ with a single component being nonzero, it follows that $c^*>0$. 
 
The inequality $c^*>0$ implies that for some $s>1$ we have $c^*\leq \Bar{c}\leq sc^*$, so that
\begin{equation}
        \label{eq:bellman_yuineq}J^*(x)={(c^*)}'x\le\Bar{c}'x\le s{(c^*)}'x=sJ^*(x).
\end{equation}
Consider the VI algorithm starting from $J_0=\Bar{c}'x$. It produces a sequence $\{J_k\}$ that is identically equal to $\Bar{c}'x$ (since $\Bar{c}'x$ is a solution to Bellman's equation), and  converges to $J^*$ [by Eq.\ \eqref{eq:bellman_yuineq} and Prop.~\ref{prop:classical}(e)]. It follows that $\Bar{c}'x={(c^*)}'x$ for all $x\in X$.
By considering states $x$ with a single component being nonzero, we obtain $\Bar c=c^*$.
\end{proof}

In the next proposition, we will show the existence of at least one optimal policy $\mu^*$ within $\widehat{\cal M}$ such that $\alpha A_{\mu^*}$ is stable.

\begin{proposition}
    \label{prop:optimal_policy}
    There exists an optimal policy $\mu^*$ that belongs to $\widehat{\cal M}$ and is such that $\alpha A_{\mu^*}$ is stable. 
\end{proposition}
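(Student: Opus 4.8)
The plan is to read off an optimal policy directly from the fixed-point vector $c^*$ of Proposition~\ref{prop:unique_dete}, and then to establish the stability property by a separate argument in which the technical conditions of Assumption~\ref{asm:dete_well}(c)--(d) do the work. For the first part, I would apply Assumption~\ref{asm:dete_well}(a) with $c=c^*$ to obtain a policy $\mu^*\in\widehat{\cal M}$ with $\mu^*(x)\in\arg\min_{u\in U(x)}\big[g(x,u)+\alpha(c^*)'f(x,u)\big]$ for all $x\in X$, so that $c^*=G(c^*)=G_{\mu^*}(c^*)$, i.e.\ $c^*=q_{\mu^*}+\alpha A_{\mu^*}'c^*$. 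Since $J^*(x)=(c^*)'x$ by Proposition~\ref{prop:unique_dete}, $\mu^*$ attains the minimum in Bellman's equation~\eqref{eq:bellman_dete_j} with $J^*$ in place of $J$, hence is optimal by the optimality criterion recalled in Section~\ref{sec:dete_formulation} (equivalently: $J^*$ solves the Bellman equation~\eqref{eq:bellman_dete_mu} for $\mu^*$, so iterating that equation and using $J^*\ge0$ gives $J^*\ge J_{\mu^*}$, which together with $J_{\mu^*}\ge J^*$ yields $J_{\mu^*}=J^*$). I would also record that $q_{\mu^*}\ge0$, since $q_{\mu^*}'x=g\big(x,\mu^*(x)\big)\ge0$ for all $x\in X$ and, by Assumption~\ref{asm:dete_well}(c), this can be tested on states having a single nonzero coordinate.

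It remains to show that $\alpha A_{\mu^*}$ is stable, i.e.\ $\rho(\alpha A_{\mu^*})<1$, where $\rho(\cdot)$ denotes spectral radius, and I would do this by contradiction. Suppose $\rho:=\rho(\alpha A_{\mu^*})\ge1$. Since $\alpha A_{\mu^*}$ is a nonnegative matrix, the Perron--Frobenius theorem yields a nonzero vector $v\ge0$ with $\alpha A_{\mu^*}v=\rho v$; by Assumption~\ref{asm:dete_well}(c) there are $x_0\in X$ and $s>0$ with $sx_0=v$, and then $x_0\ne0$, $x_0\ge0$, and $A_{\mu^*}x_0=(\rho/\alpha)x_0$. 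Hence the trajectory generated by $\mu^*$ from $x_0$ is $x_k=A_{\mu^*}^kx_0=(\rho/\alpha)^kx_0$, giving
$$J_{\mu^*}(x_0)=\sum_{k=0}^\infty\alpha^kq_{\mu^*}'x_k=\sum_{k=0}^\infty\rho^k\,(q_{\mu^*}'x_0).$$
Because $J_{\mu^*}(x_0)=J^*(x_0)=(c^*)'x_0<\infty$, while $q_{\mu^*}'x_0\ge0$ and $\rho\ge1$, this forces $q_{\mu^*}'x_0=0$, and hence $q_{\mu^*}'x_k=(\rho/\alpha)^kq_{\mu^*}'x_0=0$ for every $k$. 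Then the $N$-stage cost incurred by $\mu^*$ starting from $x_0$ equals $\sum_{k=0}^{N-1}\alpha^kq_{\mu^*}'x_k=0$, so the optimal $N$-stage cost from the nonzero state $x_0$ is $0$, contradicting Assumption~\ref{asm:dete_well}(d). Therefore $\rho(\alpha A_{\mu^*})<1$.

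The step I expect to be the crux is this last contradiction argument. The weak bound $\rho(\alpha A_{\mu^*})\le1$ is immediate from $\alpha A_{\mu^*}'c^*=c^*-q_{\mu^*}\le c^*$ with $c^*>0$, but upgrading it to a strict inequality is precisely the role of Assumption~\ref{asm:dete_well}(d), and Assumption~\ref{asm:dete_well}(c) is exactly what lets a Perron--Frobenius eigenvector be realized as an admissible initial state in $X$. The optimality half of the statement, by contrast, is routine once Proposition~\ref{prop:unique_dete} is in hand.
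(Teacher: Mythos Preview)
Your proof is correct and follows essentially the same approach as the paper's: obtain $\mu^*\in\widehat{\cal M}$ from Assumption~\ref{asm:dete_well}(a) applied at $c^*$, invoke the optimality criterion, and then rule out $\rho(\alpha A_{\mu^*})\ge1$ by a Perron--Frobenius argument combined with Assumption~\ref{asm:dete_well}(c)--(d). The paper phrases the final contradiction via $\big(G^{N}(0)\big)'\bar v\le 0$ rather than ``the $N$-stage cost under $\mu^*$ from $x_0$ is zero,'' but these are the same observation.
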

\begin{proof}
    In view of Prop.~\ref{prop:unique_dete}, we have that $J^*(x)=(c^*)'x$ where $c^*$ is the unique solution to $c=G(c)$ within $\Re^n_+$. Consider a policy $\mu^*\in \widehat{\cal M}$ that satisfies 
    $$\mu^*(x)\in \arg\min_{u\in U(x)}\big\{g(x,u)+\alpha (c^*)'f(x,u)\big\},\qquad \text{for all }x\in X.$$
    Such a policy exists by Assumption~\ref{asm:dete_well}(a), and  is optimal by Prop.~\ref{prop:classical}(c) in Appendix~\ref{app:a}. As a result, for all $x\in X$ with $x\ne 0$, we have that 
    \begin{equation}
        \label{eq:stable_cost}
        0<J^*(x)=J_{\mu^*}(x)=\lim_{\ell\to\infty}(q_{\mu^*})'\sum_{i=0}^{\ell}(\alpha A_{\mu^*})^ix=J^*(x)<\infty.
    \end{equation}
    
    To show that the matrix $\alpha A_{\mu^*}$ is stable, we note that by the Perron-Frobenius theorem (Prop.~\ref{prop:f_p} in Appendix~\ref{app:a}), $\alpha A_{\mu^*}$ has a maximal real nonnegative eigenvalue $\lambda$, with corresponding eigenvector $v\ge0$, $v\ne0$. Assume, to arrive at a contradiction, that $\lambda\geq1$. Let $\bar v$ be a positive multiple of $v$ which belongs to $X$, cf. Assumption~\ref{asm:dete_well}(c). 
   Then from Eq.~\eqref{eq:stable_cost},
    \begin{equation}
        \label{eq:eigen_cost}
        0<\lim_{\ell\to\infty}(q_{\mu^*})'\sum_{i=0}^{\ell}(\alpha A_{\mu^*})^i\bar v=\bigg(\sum_{i=0}^{\infty}\lambda^i\bigg)(q_{\mu^*})'\bar v<\infty.
    \end{equation}
Since $\lambda\geq1$, it follows that  $(q_{\mu^*})'\bar v=0$. 
 On the other hand, using also Eq.~\eqref{eq:g_le_gm},  we have
    \begin{equation}
    \label{eq:stable_positive}
        \big(G^{N}(0)\big)'\bar v\le (q_{\mu^*})'\sum_{i=0}^{N-1}(\alpha A_{\mu^*})^i\bar v=\bigg(\sum_{i=0}^{N-1}\lambda^i\bigg)(q_{\mu^*})'\bar v=0,
    \end{equation}
    where $G^{N}$ denote the $N$-fold composition of $G$. Since $\big(G^{N}(0)\big)'\bar v$ is the $N$-stage cost starting from the nonzero initial state $\bar v$, this contradicts Assumption~\ref{asm:dete_well}(d). Therefore, we have $\lambda<1$, i.e., the matrix $\alpha A_{\mu^*}$ is stable.  
\end{proof}

\section{Value and Policy Iteration Algorithms}\label{sec:algo_dete}

In this section, we show how various implementations of the VI and PI algorithms can be used to compute  the optimal cost vector $c^*$ and an optimal policy.
In addition, we will provide an alternative approach to compute $c^*$ through the solution of a convex program. 

\subsection{Synchronous and Asynchronous Value Iteration}
In general, the VI algorithm of Eq.\ \eqref{eq:vi_dete} generates a sequence of functions $J_k$. However, for our semilinear problem, the functions $J_k$ generated by VI can be fully specified by their parameter vectors $c_k$. As a result, the VI algorithm can be described in terms of $c_k$ as follows: 
\begin{equation}
    \label{eq:vi_dete_syn}
    c_{k+1}=G(c_k),
\end{equation}
where $c_0\in \Re_+^n$ is the initial condition.
The next proposition shows that the sequence $\{c_k\}$ converges to $c^*$.

\begin{proposition}
    \label{prop:vi_dete}
    The sequence $\{c_k\}$ generated by the VI algorithm \eqref{eq:vi_dete_syn} converges to $c^*$, starting with any initial vector $c_0\in \Re^n_+$.
\end{proposition}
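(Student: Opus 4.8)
The plan is to establish convergence of $\{c_k\}$ to $c^*$ for an arbitrary nonnegative starting vector $c_0$ by a sandwiching argument, exploiting the monotonicity of $G$ (established earlier) together with the results of Proposition~\ref{prop:unique_dete} and the classical VI convergence facts in Appendix~\ref{app:a}. The key structural observations are: (i) the iteration $c_{k+1}=G(c_k)$ corresponds exactly to the VI algorithm applied to the linear function $J_0(x)=c_0'x$, producing $J_k(x)=c_k'x$; and (ii) by Proposition~\ref{prop:unique_dete}, $c^*>0$ and $c^*=G(c^*)$, so the constant-in-$k$ sequence at $c^*$ is a fixed point.

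First I would handle the lower bound. Since $c^*>0$, for the given $c_0\ge 0$ there is a scalar $s\in(0,1]$ (or any small enough $s>0$) with $sc^*\le c_0$; because $G$ is monotone and $G_\mu$ is positively homogeneous up to the additive term, I'd compare the VI sequence started at $c_0$ with the one started at $\min\{c_0, \text{something}\}$. More directly: start VI from $\hat c_0 := 0$; then $0\le c_0$ componentwise, so by monotonicity of $G$ and induction, $G^k(0)\le G^k(c_0)=c_k$ for all $k$. By the proof of Proposition~\ref{prop:unique_dete}, $G^k(0)\uparrow c^*$. Hence $\liminf_k c_k \ge c^*$ componentwise. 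For the upper bound, pick $s\ge 1$ large enough that $c_0\le s\,c^*$ (possible since $c^*>0$). Then by monotonicity $c_k = G^k(c_0)\le G^k(s c^*)$. Now I'd invoke the classical VI convergence result, Prop.~\ref{prop:classical}(e) in Appendix~\ref{app:a}: the VI sequence started from the function $J_0(x)=sc^{*\prime}x$, which satisfies $J^*\le J_0\le s J^*$ (using $J^*(x)=(c^*)'x$ and $s\ge 1$), converges pointwise to $J^*$. Expressed in terms of the parameter vectors, $G^k(sc^*)\to c^*$. Combining, $c^*\le \liminf_k c_k \le \limsup_k c_k \le \lim_k G^k(sc^*) = c^*$, so $c_k\to c^*$.

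The main obstacle — or rather the point requiring care — is justifying that the classical VI convergence theorem (Prop.~\ref{prop:classical}(e)) applies with the nonstandard initial function $J_0(x)=sc^{*\prime}x$ and yields convergence of the associated parameter vectors, not merely pointwise convergence of the functions. This is resolved by Assumption~\ref{asm:dete_well}(a), which guarantees each VI iterate stays in $\widehat{\cal J}$, i.e.\ remains linear with a nonnegative parameter vector, so $G^k(sc^*)$ is well-defined; and by Assumption~\ref{asm:dete_well}(c), pointwise convergence of the linear functions forces componentwise convergence of their parameter vectors (consider states supported on a single coordinate). One should also confirm the bracketing $J^*\le s J^*$ is the form required by Prop.~\ref{prop:classical}(e); since $s\ge1$ and $J^*\ge0$ this holds, and the lower-bound half of that proposition's hypothesis, $J_0\ge J^*$, is exactly $sc^*\ge c^*$. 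With these points in place the sandwich closes and the proof is complete. $\blacksquare$
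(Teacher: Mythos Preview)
Your proposal is correct and follows essentially the same sandwiching argument as the paper: bound $c_k$ between the VI sequences started at $0$ and at $sc^*$ (for $s\ge1$ with $c_0\le sc^*$, using $c^*>0$), invoke monotonicity of $G$, and conclude since both bounding sequences converge to $c^*$ (the lower by the proof of Prop.~\ref{prop:unique_dete}, the upper via Prop.~\ref{prop:classical}(e)). Your explicit remark that pointwise convergence of the linear functions yields convergence of the parameter vectors via Assumption~\ref{asm:dete_well}(c) is a detail the paper leaves implicit.
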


\begin{proof}
    Let $\{c_k\}$ be a sequence generated by the VI algorithm \eqref{eq:vi_dete_syn}, starting with with some $c_0\in \Re^n_+$. Since by Prop.~\ref{prop:unique_dete}, $c^*>0$, we can find $s>1$ such that $c_0\leq sc^*$. Consider the sequences $\{\underline{c}_k\}$ and $\{\overline{c}_k\}$ generated by VI with $\underline{c}_0=0$ and $\overline{c}_0=sc^*$, respectively. Then $\underline{c}_0\leq c_0\leq \overline{c}_0$, and by the monotonicity of $G$, we have 
    $$\underline{c}_k\leq c_k\leq \overline{c}_k,\quad k=0,1,\dots.$$
    Using an argument similar to the one of the proof of Prop.~\ref{prop:unique_dete}, we have that $\underline{c}_k\to c^*$ and $\overline{c}_k\to c^*$. It follows that $c_k\to c^*$.
\end{proof}

From the proof of Prop.~\ref{prop:unique_dete}, it can be seen that even without Assumption~\ref{asm:dete_well}(d), we can still show that the sequence $\{c_k\}$ generated by VI converges to $c^*$, under the additional assumption that $0\leq c_0\leq c^*$. Indeed, in the proof of Prop.~\ref{prop:unique_dete}, we have shown that $\underline{c}_k\to c^*$, where $\underline{c}_0=0$, and $\underline{c}_{k+1}=G(\underline{c}_k)$. Therefore, by the monotonicity of $G$, we also have $\underline{c}_k\leq c_{k}\leq G^k(c^*)=c^*$. This yields that $c_k\to c^*$. 

The VI algorithm updates all the components of $c_k$ simultaneously at every iteration. In the literature, this is often referred to as a \emph{synchronous algorithm}. An alternative class of algorithms, called \emph{asynchronous}, updates only \emph{some} components of $c_k$ at each iteration. The asynchronous VI algorithm was first developed in \cite{bertsekas1982distributed}, and it was extended to  solve more general fixed point problems in \cite{bertsekas1983distributed}; see \cite{bertsekas2012dynamic} and \cite{bertsekas2022abstract} for recent textbook discussions. In what follows, we will develop the asynchronous version of the VI algorithm for our semilinear problem.

Given a vector $c\in \Re_+^n$, let us denote by $c(i)$ its $i$th element. We consider a partition of the set $I=\{1,2,\dots,n\}$ into the sets $I_1,\dots,I_m$, and a corresponding partition $c=(c^1,\dots,c^m)$, where $c^\ell$ is the restriction of $c$ on the set $I_\ell$. We associate with each processor $\ell$ a set of iteration indices $\mathcal{R}_\ell \subset \{0,1,\dots\}$. In the asynchronous VI algorithm, processor $\ell$ updates $c^\ell$ only at iterations $k \in \mathcal{R}_\ell$, using components $c^j$, $j\neq \ell$, received from other processors. Specifically, the value of $c^j$ available to processor $\ell$ at iteration $k$ is the one computed by processor $j$ at iteration $\tau_{\ell j}(k)\in \{0,1,\dots\}$. Here the first subscript $\ell$ in $\tau_{\ell j}(k)$ denotes the receiving processor, the second subscript $j$ denotes the sending processor, and $k-\tau_{\ell j}(k)$ represents the communication ``delay.'' With this notation, the asynchronous VI algorithm is defined as
\begin{equation}
    \label{eq:asy_vi_dete}
    c^\ell_{{k}+1}(i)=\begin{cases}G\Big(c^1_{\tau_{\ell 1}({k})},\dots ,c^m_{\tau_{\ell m}({k})}\Big)(i) &\text{if }{k}\in \mathcal{R}_\ell,\;i\in I_\ell,\\
    c^\ell_{{k}}(i)&\text{if }{k}\not\in \mathcal{R}_\ell,\;i\in I_\ell.
    \end{cases}
\end{equation} 
To ensure that the information received by each processor is sufficiently ``new'' in order to ensure algorithmic convergence, we make the following assumption, which is known as the \emph{total asynchronism} condition; see \cite[p.~430]{bertsekas1989parallel}.

\begin{assumption}[Continuous Updating and Information Renewal]\label{asm:asy_dete}
\begin{itemize}
    \item[(a)] The set of iteration indices $\mathcal{R}_\ell$ at which processor $\ell$ updates $c^\ell$ is infinite for each $\ell=1,\dots,m$.
    \item[(b)] $\lim_{k\to\infty}\tau_{\ell j}(k)=\infty$ for all $\ell,j=1,\dots,m$. 
\end{itemize}
\end{assumption}

In Prop.~\ref{prop:unique_dete}, we have shown that $c^*$ is the unique fixed point of the function $G$ within $\re_+^n$. With the additional Assumption~\ref{asm:asy_dete}, we have the following convergence result.

\begin{proposition}\label{prop:asy_vi_dete}
    Let Assumption~\ref{asm:asy_dete} hold. Then the sequence $\{c_k\}$ generated by the asynchronous VI algorithm \eqref{eq:asy_vi_dete} converges to $c^*$, starting with any initial vector $c_0\in \Re^n_+$. 
\end{proposition}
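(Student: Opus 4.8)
The plan is to deduce this from the asynchronous convergence theorem of Bertsekas and Tsitsiklis (\cite[Section~6.2]{bertsekas1989parallel}; see also \cite[Section~2.6.1]{bertsekas2022abstract}), by exhibiting a nested sequence of boxes in $\re^n_+$ that satisfies the hypotheses of that theorem. The boxes will be generated by running synchronous VI both ``from below'' and ``from above.''

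Concretely, given the initial vector $c_0\in\re^n_+$, I would use the fact that $c^*>0$ (Prop.~\ref{prop:unique_dete}) to pick a scalar $s>1$ with $c_0\le sc^*$, and then define $\underline{c}_k=G^k(0)$, $\overline{c}_k=G^k(sc^*)$, and
$$S(k)=\big\{\,c\in\re^n_+ \,\big|\, \underline{c}_k\le c\le \overline{c}_k\,\big\},\qquad k=0,1,\ldots.$$
I would then check the three ingredients the theorem requires. First, the \emph{box condition}: each $S(k)$ is a Cartesian product of intervals over the coordinates $1,\ldots,n$, hence also over the partition blocks $I_1,\ldots,I_m$, so that $S(k)=S^1(k)\times\cdots\times S^m(k)$; this is exactly what makes the delayed, blockwise updates \eqref{eq:asy_vi_dete} compatible with the family $\{S(k)\}$. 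Second, \emph{nesting and synchronous convergence}: since $0\le G(0)$ and $G(sc^*)\le sc^*$ (the latter because $s\ge1$ and $g\ge0$ give $G(sc^*)'x=\min_u[g(x,u)+\alpha s(c^*)'f(x,u)]\le s\min_u[g(x,u)+\alpha (c^*)'f(x,u)]=sJ^*(x)$, and then one invokes Assumption~\ref{asm:dete_well}(c)), monotonicity of $G$ yields $\underline{c}_k\le\underline{c}_{k+1}\le c^*\le\overline{c}_{k+1}\le\overline{c}_k$ and $G\big(S(k)\big)\subset S(k+1)$; moreover $\underline{c}_k\to c^*$ and $\overline{c}_k\to c^*$ by the monotone-convergence plus uniqueness argument already used in the proofs of Prop.~\ref{prop:unique_dete} and Prop.~\ref{prop:vi_dete}, so $\bigcap_k S(k)=\{c^*\}$ and any selection $y_k\in S(k)$ converges to $c^*$. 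Third, the \emph{initial condition} $c_0\in S(0)$, which holds by the choice of $s$. With these in hand, Assumption~\ref{asm:asy_dete} (continuous updating and information renewal) is precisely the total-asynchronism hypothesis, and the theorem gives that the asynchronous iterates $\{c_t\}$ eventually enter and stay in $S(k)$ for every $k$, hence $c_t\to c^*$.

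I expect no genuine obstacle: the delicate bookkeeping --- arguing that, after enough iterations, every block of data used by every processor comes from a time at which the iterate already lay in $S(k)$, so that the next update lands in $S(k+1)$ --- is exactly what the asynchronous convergence theorem packages, and it applies verbatim once the box condition is noted. The only computational point that needs a line of care is the descent inequality $G(sc^*)\le sc^*$, which is where the nonnegativity of the per-stage cost $g$ (together with $s\ge1$) enters; everything else is monotonicity of $G$ and the already-established convergence of synchronous VI from above and below.
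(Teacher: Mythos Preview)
Your proposal is correct and is precisely the standard argument the paper has in mind: the paper omits the proof entirely, referring to \cite[Sections~2.6.1, 3.6.1]{bertsekas2022abstract} for the asynchronous convergence framework, and your nested-box construction $S(k)=\{c:\underline c_k\le c\le\overline c_k\}$ with $\underline c_k=G^k(0)$ and $\overline c_k=G^k(sc^*)$ is exactly how that framework specializes here. The one inequality you flag as needing care, $G(sc^*)\le sc^*$, is handled correctly via $g\ge0$, $s\ge1$, and Assumption~\ref{asm:dete_well}(c).
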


Prop.~4 follows by a direct application of the \emph{asynchronous convergence theorem}, first established in \cite{bertsekas1983distributed}; also see \cite[Section~6.2]{bertsekas1989parallel} and \cite[Section~2.6]{bertsekas2012dynamic}. For this reason, we only provide a brief discussion of the proof ideas. To apply the theorem, it is necessary to construct a sequence of nonempty sets $S(k)\subseteq \Re_+^n$ satisfying the following properties:  
1) $S(k+1)\subseteq S(k)$ for all $k=0,1,\dots$;  
2) If a sequence $\{\hat c_k\}$ satisfies $\hat c_k \in S(k)$ for all $k$, then $\hat c_k $ converges to $c^*$;  
3) For all $k$ and $c\in S(k)$, we have $G(c)\in S(k+1)$;  
4) Each set $S(k)$ has a Cartesian product structure $S(k)=S_1(k)\times\cdots\times S_m(k)$, where $S_\ell(k)\subseteq \Re_+^{n_\ell}$ and $n_\ell$ is the dimension of $c^\ell$;  
5) The initial set $S(0)$ contains $c_0$. Under these conditions and Assumption~4.1, the asynchronous convergence theorem ensures that $c_k$ converges to $c^*$.  

To this end, define the sequence of sets $S(k)=\{c\,|\,\underline{c}_k\leq c\leq \overline{c}_k\}$, where the bounding vectors $\underline{c}_k$ and $\overline{c}_k$ are given iteratively by $\underline{c}_0=0$, $\overline{c}_0=sc^*$ for some constant $s>1$, and for all $k$, $\underline{c}_{k+1}=G(\underline{c}_k)$, and $\overline{c}_{k+1}=G(\overline{c}_k)$. Given any initial vector $c_0\in \Re_+^n$, one can select $s>1$ such that $c_0\leq sc^*$, in view of $c^*>0$ [cf. Prop.~1]. It is straightforward to verify that the sets $\{S(k)\}$ satisfy the conditions above, which completes the proof. We refer to \cite[Sections~2.6.1, 3.6.1]{bertsekas2022abstract} for a related discussion.

\subsection{Classical and Optimistic Policy Iteration}

The PI algorithm starts with a policy $\mu^0$ and generates a sequence of policies $\{\mu^k\}$ by first performing the \emph{policy evaluation} step, which computes its cost function $J_{\m^k}$, defined pointwise by
\begin{equation}
\label{eq:j_muk_def}
    J_{\m^k}(x_0)=\sum_{\ell=0}^{\infty} \alpha^\ell g\big(x_\ell,\mu^k(x_\ell)\big),\qquad \text{for all }x_0,
\end{equation}
where $x_{\ell+1}=f\big(x_\ell,\mu^k(x_\ell)\big)$, $\ell=0,1,\dots$; cf. Eq.~\eqref{eq:j_mu_def}. This is followed by the \emph{policy improvement} step, which computes the policy $\mu^{k+1}$ through the minimization operation
\begin{equation}
    \label{eq:policy_imp}
    \m^{k+1}(x)\in\arg\min_{u\in U(x)}\Big[g(x,u)+\alpha J_{\m^k}\big(f(x,u)\big)\Big],\qquad \text{for all }x.
\end{equation}

For the semilinear problem considered here, the PI algorithm can be carried out in terms of the parameter vectors associated with the cost functions. We start with a policy $\mu^0\in \widehat{\cal M}$ such that $\alpha A_{\mu^0}$ is stable. At a typical iteration $k$, we have computed a policy $\mu^k\in\widehat{\cal M}$ with  $\alpha A_{\mu^k}$ stable and cost function given by
$$J_{\m^k}(x)=\sum_{\ell=0}^{\infty}  q_{\mu^k}'(\alpha A_{\mu^k})^\ell x,\qquad \text{for all }x;$$
cf. Eq.~\eqref{eq:j_muk_def}. Equivalently, we have $J_{\m^k}(x)=c_{\mu^k}'x$, where
\begin{equation}
    \label{eq:pi_eva_pos}
    c_{\mu^k}'=q_{\mu^k}'(I-\alpha A_{\mu^k})^{-1}.
\end{equation}
Moreover, the improved policy $\mu^{k+1}$ [cf.\ Eq. \eqref{eq:policy_imp}] belongs to $\widehat{\cal M}$ and satisfies
\begin{equation}
    \label{eq:pi_imp_pos}
    G_{\mu^{k+1}}(c_{\mu^k})=G(c_{\mu^k}).
\end{equation}

The following proposition deals with the convergence properties of the preceding PI algorithm. 
\begin{proposition}
    \label{prop:pi_dete}
    The PI algorithm \eqref{eq:pi_eva_pos}-\eqref{eq:pi_imp_pos} is well-defined, i.e., for every $k$, $\mu^k\in \widehat{\cal M}$ and $\alpha A_{\mu^k}$ is stable. Moreover, we have $c_{\mu^k}\to c^*$ as $k\to\infty$. If in addition $\widehat{\cal M}$ consists of a finite number of policies, then there exists some $\Bar{k}$ such that for all $k\geq \Bar{k}$, the policies $\mu^k$ are optimal.
\end{proposition}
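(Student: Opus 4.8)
The plan is to establish the three assertions of the proposition in turn: well-definedness of the iteration, the convergence $c_{\mu^k}\to c^*$, and finite termination when $\widehat{\cal M}$ is finite.

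\emph{Well-definedness.} I would induct on $k$, the base case being the standing hypothesis on $\mu^0$. Assume $\mu^k\in\widehat{\cal M}$ with $\alpha A_{\mu^k}$ stable. Then $I-\alpha A_{\mu^k}$ is invertible with nonnegative inverse $\sum_{\ell=0}^\infty(\alpha A_{\mu^k})^\ell$, so $c_{\mu^k}=(I-\alpha A_{\mu^k}')^{-1}q_{\mu^k}\ge0$ is well-defined [cf.\ Eq.~\eqref{eq:pi_eva_pos}] and is the fixed point of $G_{\mu^k}$, i.e.\ $G_{\mu^k}(c_{\mu^k})=c_{\mu^k}$; moreover $J_{\mu^k}(x)=c_{\mu^k}'x\ge J^*(x)$ for all $x\in X$, so Assumption~\ref{asm:dete_well}(c) and Prop.~\ref{prop:unique_dete} give $c_{\mu^k}\ge c^*>0$. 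By Assumption~\ref{asm:dete_well}(a) the policy improvement step produces $\mu^{k+1}\in\widehat{\cal M}$ with $G_{\mu^{k+1}}(c_{\mu^k})=G(c_{\mu^k})$ [Eq.~\eqref{eq:pi_imp_pos}]; combining this with Eq.~\eqref{eq:g_le_gm} and $G_{\mu^k}(c_{\mu^k})=c_{\mu^k}$ yields the key inequality $q_{\mu^{k+1}}+\alpha A_{\mu^{k+1}}'c_{\mu^k}=G_{\mu^{k+1}}(c_{\mu^k})\le c_{\mu^k}$. To prove $\alpha A_{\mu^{k+1}}$ stable, let $\lambda$ be its largest real nonnegative eigenvalue, with eigenvector $v\ge0$, $v\ne0$ (Perron-Frobenius), and let $\bar v\in X$ be a positive multiple of $v$ (Assumption~\ref{asm:dete_well}(c)), so that $\alpha A_{\mu^{k+1}}\bar v=\lambda\bar v$. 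Taking the inner product of the key inequality with $\bar v$ gives $\bar v'q_{\mu^{k+1}}+(\lambda-1)\bar v'c_{\mu^k}\le0$; since $\bar v'c_{\mu^k}>0$ and $\bar v'q_{\mu^{k+1}}\ge0$, the assumption $\lambda\ge1$ forces $\bar v'q_{\mu^{k+1}}=0$, whence, arguing as in Eqs.~\eqref{eq:g_le_gm}--\eqref{eq:stable_positive}, the $N$-stage cost from $\bar v$ satisfies $\big(G^N(0)\big)'\bar v\le\big(G_{\mu^{k+1}}^N(0)\big)'\bar v=\big(\sum_{i=0}^{N-1}\lambda^i\big)\,\bar v'q_{\mu^{k+1}}=0$, contradicting Assumption~\ref{asm:dete_well}(d) because $\bar v\ne0$. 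Hence $\lambda<1$, completing the induction.

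\emph{Convergence.} Since $G_{\mu^{k+1}}$ is monotone and $G_{\mu^{k+1}}(c_{\mu^k})\le c_{\mu^k}$, the sequence $\{G_{\mu^{k+1}}^\ell(c_{\mu^k})\}_{\ell\ge0}$ is nonincreasing, and because $\alpha A_{\mu^{k+1}}$ is stable it converges to the fixed point $c_{\mu^{k+1}}$ of $G_{\mu^{k+1}}$; together with $J_{\mu^{k+1}}\ge J^*$ (and Assumption~\ref{asm:dete_well}(c)) this gives $c^*\le c_{\mu^{k+1}}\le G_{\mu^{k+1}}(c_{\mu^k})=G(c_{\mu^k})\le c_{\mu^k}$. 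Thus $\{c_{\mu^k}\}$ is nonincreasing and bounded below by $c^*$, hence converges to some $\hat c\ge c^*$; letting $k\to\infty$ in $c^*\le c_{\mu^{k+1}}\le G(c_{\mu^k})\le c_{\mu^k}$ and using the continuity of $G$ [Assumption~\ref{asm:dete_well}(a)] yields $\hat c\le G(\hat c)\le\hat c$, so $G(\hat c)=\hat c$ and therefore $\hat c=c^*$ by the uniqueness part of Prop.~\ref{prop:unique_dete}.

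\emph{Finite termination and the main obstacle.} If $\widehat{\cal M}$ is finite, then $\{c_{\mu^k}\}$ takes only finitely many values; a nonincreasing $\Re^n$-valued sequence with finitely many values is eventually constant (argue coordinate by coordinate), say $c_{\mu^k}=c_{\mu^{\bar k}}$ for all $k\ge\bar k$, and since $c_{\mu^k}\to c^*$ this common value is $c^*$. Hence $J_{\mu^k}(x)=(c^*)'x=J^*(x)$ for all $x$ and all $k\ge\bar k$, so $\mu^k$ is optimal for $k\ge\bar k$. The only genuinely delicate point is the stability of $\alpha A_{\mu^{k+1}}$ in the inductive step: one cannot yet invoke finiteness of $J_{\mu^{k+1}}$, so the argument of Prop.~\ref{prop:optimal_policy} is unavailable and must be replaced by the inequality $G_{\mu^{k+1}}(c_{\mu^k})\le c_{\mu^k}$ (which holds precisely because $c_{\mu^k}$ is a fixed point of $G_{\mu^k}$ and $G\le G_{\mu^k}$) together with Assumption~\ref{asm:dete_well}(d); everything else is a routine consequence of monotonicity, the Neumann series for stable nonnegative matrices, and Prop.~\ref{prop:unique_dete}.
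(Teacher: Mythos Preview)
Your argument is correct; the route differs from the paper's in two places worth noting.

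For well-definedness, the paper first invokes the general policy-improvement inequality $J_{\mu^{k+1}}\le J_{\mu^k}$ from Prop.~\ref{prop:classical}(d), so that finiteness of $J_{\mu^{k+1}}$ is available \emph{before} stability is addressed, and then simply repeats the Perron--Frobenius argument of Prop.~\ref{prop:optimal_policy}. Your worry that ``one cannot yet invoke finiteness of $J_{\mu^{k+1}}$'' is therefore unfounded in the paper's setup; nevertheless your direct route---using only the vector inequality $G_{\mu^{k+1}}(c_{\mu^k})\le c_{\mu^k}$, the strict positivity $c_{\mu^k}\ge c^*>0$, and Assumption~\ref{asm:dete_well}(d)---is a clean alternative that stays entirely within the $\Re^n_+$ framework and avoids the appeal to the abstract DP result. (One small wording issue: if $\lambda>1$ the inequality $\bar v'q_{\mu^{k+1}}+(\lambda-1)\bar v'c_{\mu^k}\le0$ already gives the contradiction $\bar v'q_{\mu^{k+1}}<0$; only the boundary case $\lambda=1$ actually needs the $N$-stage argument. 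Your conclusion is unaffected.)

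For convergence, the paper sandwiches $c_{\mu^k}$ between $c^*$ and a VI sequence $\bar c_k=G^k(c_{\mu^0})$, proving $c_{\mu^k}\le\bar c_k$ by induction and then appealing to Prop.~\ref{prop:vi_dete}. You instead extract $c^*\le c_{\mu^{k+1}}\le G(c_{\mu^k})\le c_{\mu^k}$, pass to the monotone limit, and use continuity of $G$ plus uniqueness of the fixed point. Your argument is shorter and does not require VI convergence as an input; the paper's argument, on the other hand, gives the explicit comparison $c_{\mu^k}\le G^k(c_{\mu^0})$, which is informative about the rate. The finite-termination step is essentially the same in both proofs.
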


\begin{proof}
    Our proof will proceed in three steps. First, we will show that the PI algorithm is well-posed in the sense that the inverse in Eq.~\eqref{eq:pi_eva_pos} exists, and that the policy improvement step of Eq.~\eqref{eq:pi_imp_pos} is possible. Next, we will show that the cost vector sequence $c_{\m^{k}}$ converges to $c^*$ by comparing it with a sequence generated by VI. Finally, we will show finite termination when $\widehat{\cal M}$ consists of finitely many policies.

    First, we note that the sequence of functions $\{J_{\mu^k}\}$ generated by PI is monotonically decreasing, i.e., $J_{\mu^{k+1}}\leq J_{\mu^k}$ for all $k$; see Prop.~\ref{prop:classical}(d). Since the initial policy $\mu^0$ is assumed to be such that $\alpha A_{\mu^0}$ is stable, the cost $J_{\mu^0}(x)$ is finite for all $x$, so we have that  $J_{\mu^k}(x)$ is finite for all $x$ and $k$. Using arguments similar to those in the proof for Prop.~\ref{prop:optimal_policy}, we can show that the matrices $\alpha A_{\mu^k}$ are stable for all $k$. As a result, the inverse in Eq.~\eqref{eq:pi_eva_pos} is defined for all $\mu^k$. Moreover, Assumption~\ref{asm:dete_well}(a) implies that there exists some $\mu^{k+1}\in \widehat{\cal M}$ that satisfies Eq.~\eqref{eq:pi_imp_pos}.

    To see that $\{c_{\mu^k}\}$ converges to $c^*$, we consider the auxiliary sequence $\{\Bar{c}_k\}$ with $\Bar{c}_0=c_{\mu^0}$, and $\Bar{c}_{k+1}=G(\Bar{c}_k)$. We will show by induction that
    $$c_{\mu^k}\leq \Bar{c}_k,\quad k=0,1,\dots.$$
    The inequality holds for $k=0$ in view of the definition of $\Bar{c}_0$. Suppose that $c_{\mu^k}\leq \Bar{c}_k$. Then by the monotonicity of $G$, we have 
    \begin{equation}
        \label{eq:pi_norm_ineq4}
        G(c_{\mu^k})\leq G(\Bar{c}_k)=\Bar{c}_{k+1}.
    \end{equation}
    In addition, we have 
    \begin{equation}
        \label{eq:pi_ineq}
        G_{\mu^{k+1}}(c_{\mu^k})=G(c_{\mu^k})\leq G_{\mu^k}(c_{\mu^k})=c_{\mu^k},
    \end{equation}
    where  the first equality holds by the definition of $\mu^{k+1}$, the inequality is due to the relations between $G$ and $G_{\mu^k}$ [cf. Eq.~\eqref{eq:g_le_gm}], and the second equality corresponds to Bellman's equation with respect to $\mu^k$; see Prop.~\ref{prop:classical}(b). From Eq. \eqref{eq:pi_ineq}, we have
    \begin{equation}
        G_{\mu^{k+1}}(c_{\mu^k})\leq c_{\mu^k}.
    \end{equation}
    Applying $G_{\mu^{k+1}}$ on both sides of this equation and using the monotonicity of $G_{\mu^{k+1}}$, we have 
    $$G_{\mu^{k+1}}\big(G_{\mu^{k+1}}(c_{\mu^k})\big)\leq G_{\mu^{k+1}}(c_{\mu^k}),$$
    or equivalently,
    $$q_{\mu^{k+1}}'x+\alpha \hat{J}\big(A_{\mu^{k+1}}x\big)\leq \hat{J}(x),\qquad \hbox{for all } x\in X,$$
    with $\hat{J}(x)=G_{\mu^{k+1}}(c_{\mu^k})'x$. Then in view of Prop.~\ref{prop:classical}(b), we have that
    \begin{equation}
        \label{eq:pi_norm_ineq2}
        c_{\mu^{k+1}}\leq G_{\mu^{k+1}}(c_{\mu^k}).
    \end{equation}
    Combining Eqs.\ \eqref{eq:pi_norm_ineq4} and \eqref{eq:pi_norm_ineq2}, and using the equality $G_{\mu^{k+1}}(c_{\mu^k})=G(c_{\mu^k})$, we obtain $c_{\mu^{k+1}}\leq \Bar{c}_{k+1}$. Since $\Bar{c}_{k}\to c^*$ by Prop.~\ref{prop:vi_dete}, and $c_{\mu^{k}}\geq c^*$ by the definition of $c^*$, we have that $c_{\mu^{k}}\to c^*$.

    Suppose that $\widehat{\cal M}$ is finite. Since $c_{\mu^{k+1}}\leq c_{\mu^k}$, then either $c_{\mu^{k+1}}\leq c_{\mu^k}$ and $c_{\mu^{k+1}}\neq c_{\mu^k}$, or $c_{\mu^{k+1}}= c_{\mu^k}$. The first case implies that $\mu^{k}\neq \mu^{k+1}$, which can only occur finitely often, since $\widehat{\cal M}$ is finite. Let $\Bar{k}$ be the smallest index such that $c_{\mu^{k+1}}= c_{\mu^k}$. Then we have
    $$c_{\mu^{\Bar{k}+1}}=G_{\mu^{\Bar{k}+1}}(c_{\mu^{\Bar{k}+1}})=G_{\mu^{\Bar{k}+1}}(c_{\mu^{\Bar{k}}})=G(c_{\mu^{\Bar{k}}})=G(c_{\mu^{\Bar{k}+1}}),$$
    where the first equality follows from Prop.~\ref{prop:classical}(b), and the second and last equalities follow from the definition of $\Bar{k}$. The third equality is due to the definition of $\mu^{\Bar{k}+1}$. Thus, we have  $c_{\mu^{\Bar{k}+1}}=G(c_{\mu^{\Bar{k}+1}})$. Since $c^*$ is the unique solution to $c=G(c)$, we obtain $c_{\mu^{\Bar{k}+1}}=c_{\mu^{\Bar{k}}}=c^*$. In other words, $\mu^{\Bar{k}}$ and $\mu^{\Bar{k}+1}$ are both optimal. Moreover, all the policies $\mu^k$, with $k\geq \Bar{k}+1$, satisfy $G_{\mu^{k}}(c^*)=G(c^*)$, so they are optimal by Prop.~\ref{prop:classical}(c).
\end{proof}

The PI algorithm of Eqs. \eqref{eq:pi_eva_pos}, \eqref{eq:pi_imp_pos} also admits an optimistic variant. For the semilinear problem studied here, it can be described in terms of vectors $c_k$ of linear cost functions $J_k(x)=c_k'x$ and associated policies $\mu^k$. In particular, let $\{\ell_k\}$ be any sequence of positive integers chosen as design parameters. The optimistic PI algorithm starts with some $c_0$ such that 
\begin{equation}
    \label{eq:op_pi_initial_pos_dete}
    c_0\geq G(c_0).
\end{equation}
At a typical iteration $k$, given $c_k$, it computes a policy $\mu^k\in \widehat{\cal M}$ such that
\begin{equation}
    \label{eq:op_pi_policy_dete}
    G_{\mu^k}(c_k)=G(c_k),
\end{equation}
and it obtains $c_{k+1}$ by
\begin{equation}
    \label{eq:op_pi_cost_dete}
    c_{k+1}'=c_k'(\alpha A_{\mu^k})^{\ell_k}+q_{\mu^k}'\sum_{i=0}^{\ell_k-1}(\alpha A_{\mu^k})^i.
\end{equation}
Note that there exists a vector $c_0\in \Re^n_+$ that satisfies the inequality \eqref{eq:op_pi_initial_pos_dete} in view of Prop.~\ref{prop:unique_dete}. In what follows, we will show that the sequence $c_k$ generated by optimistic PI converges to $c^*$, and the cost vectors associated with $\mu^k$ also converge to $c^*$.

\begin{proposition}
    \label{prop:optimistic_pi_dete}
    Let $\{\m^k\}$ be a sequence generated by the optimistic PI algorithm  \eqref{eq:op_pi_policy_dete}-\eqref{eq:op_pi_cost_dete}. Then for every $k$, $\mu^k\in \widehat{\cal M}$, $\alpha A_{\mu^k}$ is stable, and for some $c_{\mu^k}\in \re_+^n$, $J_{\mu^k}(x)=(c_{\mu^k})'x$ for all $x\in X$. Moreover, we have $c_k\to c^*$ and $c_{\mu^k}\to c^*$ as $k\to\infty$.
\end{proposition}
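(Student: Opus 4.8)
The plan is to exploit two structural features of the iteration. First, transposing \eqref{eq:op_pi_cost_dete} and recalling $G_\mu(c)=q_\mu+\alpha A_\mu'c$, the update is exactly $c_{k+1}=G_{\mu^k}^{\ell_k}(c_k)$, the $\ell_k$-fold composition of the affine map $G_{\mu^k}$. Second, the initialization $c_0\ge G(c_0)$ is a \emph{super-solution} condition that, thanks to the relation $G\le G_{\mu}$ of \eqref{eq:g_le_gm} and monotonicity, is preserved: $c_k\ge G(c_k)$ for all $k$. Accordingly I would run an induction whose hypothesis at stage $k$ is merely ``$c_k\in\re^n_+$ and $c_k\ge G(c_k)$,'' and inside the inductive step derive everything the proposition asserts at stage $k$ — existence of $\mu^k\in\widehat{\cal M}$, stability of $\alpha A_{\mu^k}$, the formula $J_{\mu^k}(x)=(c_{\mu^k})'x$, and the bracketing $c^*\le c_{\mu^k}\le c_{k+1}\le c_k$ — and then check that the hypothesis holds again at $k+1$. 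The base case is the initialization \eqref{eq:op_pi_initial_pos_dete}.

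The inductive step would proceed as follows. Since $c_k\in\re^n_+$, Assumption~\ref{asm:dete_well}(a) gives a policy $\mu^k\in\widehat{\cal M}$ with $G_{\mu^k}(c_k)=G(c_k)$; here $A_{\mu^k}\ge0$ by definition of $\widehat{\cal M}$, and $q_{\mu^k}\ge0$ because $g\ge0$ together with Assumption~\ref{asm:dete_well}(c). From $c_k\ge G(c_k)=G_{\mu^k}(c_k)$ and monotonicity of the affine map $G_{\mu^k}$, the sequence $j\mapsto G_{\mu^k}^j(c_k)$ is nonincreasing and bounded below by $0$; hence $\big(G_{\mu^k}^j(0)\big)'x\le\big(G_{\mu^k}^j(c_k)\big)'x\le c_k'x$, so the partial cost sums along $\mu^k$ are bounded and $J_{\mu^k}(x)\le c_k'x<\infty$ for all $x\in X$. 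With finiteness of $J_{\mu^k}$ in hand, I would run \emph{verbatim} the Perron--Frobenius argument from the proof of Prop.~\ref{prop:optimal_policy}: if the spectral radius $\lambda$ of $\alpha A_{\mu^k}$ satisfied $\lambda\ge1$, finiteness of $J_{\mu^k}$ on a state collinear with the nonnegative Perron eigenvector (which exists in $X$ by Assumption~\ref{asm:dete_well}(c)) would force $q_{\mu^k}'\bar v=0$, whence $\big(G^N(0)\big)'\bar v\le\big(G_{\mu^k}^N(0)\big)'\bar v=0$ by \eqref{eq:g_le_gm}, contradicting Assumption~\ref{asm:dete_well}(d). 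So $\alpha A_{\mu^k}$ is stable, $J_{\mu^k}(x)=(c_{\mu^k})'x$ with $c_{\mu^k}=(I-\alpha A_{\mu^k}')^{-1}q_{\mu^k}\ge0$, and $c_{\mu^k}=\lim_j G_{\mu^k}^j(c_k)$. Because $\ell_k\ge1$ and the sequence $G_{\mu^k}^j(c_k)$ is nonincreasing, $c_{\mu^k}\le c_{k+1}=G_{\mu^k}^{\ell_k}(c_k)\le G_{\mu^k}(c_k)=G(c_k)\le c_k$; moreover $c^*\le c_{\mu^k}$ since $J^*\le J_{\mu^k}$, and $c_{k+1}\in\re^n_+$. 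Finally $G(c_{k+1})\le G_{\mu^k}(c_{k+1})=G_{\mu^k}^{\ell_k+1}(c_k)\le G_{\mu^k}^{\ell_k}(c_k)=c_{k+1}$, which restores the inductive hypothesis at $k+1$.

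For the convergence statements, the induction already shows $\{c_k\}$ is nonincreasing and, for $k\ge1$, bounded below by $c^*$, so it converges; to pin the limit down I would bracket it from above by plain VI. Let $d_0=c_0$ and $d_{k+1}=G(d_k)$, so that $d_k\to c^*$ by Prop.~\ref{prop:vi_dete}. A one-line induction — $c_{k+1}=G_{\mu^k}^{\ell_k}(c_k)\le G_{\mu^k}(c_k)=G(c_k)\le G(d_k)=d_{k+1}$, using monotonicity of $G$ and $c_k\le d_k$ — gives $c_k\le d_k$ for all $k$, so $c^*\le c_k\le d_k\to c^*$ forces $c_k\to c^*$. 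Then $c^*\le c_{\mu^k}\le c_{k+1}\to c^*$ gives $c_{\mu^k}\to c^*$.

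The step I expect to be the main obstacle is purely organizational: stability of $\alpha A_{\mu^k}$ is part of what is being claimed, yet it is needed before one may write $c_{\mu^k}$, let alone the bracket $c_{\mu^k}\le c_{k+1}\le c_k$; it must be extracted, via the Perron--Frobenius/Assumption~\ref{asm:dete_well}(d) argument, from the finiteness $J_{\mu^k}(x)<\infty$, which itself is available only because the inductive hypothesis $c_k\ge G(c_k)$ makes $G_{\mu^k}^j(c_k)$ a nonincreasing sequence dominating the partial sums of $J_{\mu^k}$. Sequencing these deductions correctly is the crux; beyond that, everything reduces to monotonicity of $G$ and the $G_{\mu^k}$'s and the inequality \eqref{eq:g_le_gm}, with no genuinely new idea beyond those already used in Propositions~\ref{prop:unique_dete}--\ref{prop:pi_dete}.
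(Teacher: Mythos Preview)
Your proposal is correct and follows essentially the same approach as the paper: both arguments establish inductively that $c_k\ge G(c_k)$, sandwich $\{c_k\}$ between $c^*$ and the VI sequence $d_k=G^k(c_0)$ (the paper's $\overline{c}_k$), invoke Prop.~\ref{prop:vi_dete} for $d_k\to c^*$, and squeeze $c_{\mu^k}$ between $c^*$ and $c_k$. Your ordering is in fact tidier than the paper's, which asserts stability of $\alpha A_{\mu^k}$ up front by reference to ``similar arguments'' before the super-solution property $G_{\mu^k}(c_k)\le c_k$ that actually underwrites finiteness of $J_{\mu^k}$ has been established; your explicit sequencing (finiteness from $G_{\mu^k}^j(c_k)\downarrow$, then Perron--Frobenius, then $c_{\mu^k}$) closes that loop cleanly and even yields the slightly sharper bracket $c_{\mu^k}\le c_{k+1}$ rather than the paper's $c_{\mu^k}\le c_k$.
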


\begin{proof}
   The stability of the matrices $\alpha A_{\mu^k}$ can be shown by using arguments similar to those in the proof for Prop.~\ref{prop:pi_dete}. As a result, the cost function $J_{\mu^k}$ of $\mu^k$ can be written as $J_{\mu^k}(x)=(c_{\mu^k})'x$ for all $x\in X$, where
   $$c_{\mu^k}'=q_{\mu^k}'(I-\alpha A_{\mu^k})^{-1},$$
   cf. Eq.~\eqref{eq:pi_eva_pos}.

  To prove the convergence of optimistic PI, we consider an auxiliary sequence $\{\overline{c}_k\}$ that is generated with the iteration $\overline{c}_{k+1}=G(\overline{c}_k)$, starting with $\overline{c}_0=c_0$. The sequences $\{c_k\}$ and $\{\overline{c}_k\}$ define sequences of functions $\{J_k\}$ and $\overline{J}_k$ via $J_k(x)=(c_k)'x$ and $\overline{J}_k(x)=(\overline{c}_k)'x$, respectively. In what follows, we will show by induction the inequalities 
    \begin{align}
             &G(\overline{c}_k)\leq \overline{c}_k,\qquad k=0,1,\dots,\label{eq:op_pi_inequality_c}\\
             &G(c_k)\leq c_k,\qquad k=0,1,\dots,\label{eq:op_pi_inequality_b}\\
            &c^*\leq c_k\leq \overline{c}_k,\qquad k=0,1,\dots,\label{eq:op_pi_inequality_a}
    \end{align}
    one after the other. 

   Starting with $k=0$, from the definition of $c_0$, we have that $c_0= \overline{c}_0$ and $G(c_0)\leq c_0$. As a result, $G(c_0)'x\leq c_0'x$ for all $x\in X$. By Prop.~\ref{prop:classical}(a), this implies that $J^*(x)\leq c_0'x$ for all $x\in X$. Since $J^*(x)=(c^*)'x$, we have $c^*\leq c_0$. Therefore, Eqs.~\eqref{eq:op_pi_inequality_c}, \eqref{eq:op_pi_inequality_b}, and \eqref{eq:op_pi_inequality_a} hold for $k=0$. 
   
   Let us assume that Eqs.~\eqref{eq:op_pi_inequality_c}, \eqref{eq:op_pi_inequality_b}, and \eqref{eq:op_pi_inequality_a} hold for $k$. We will show that $\overline{c}_{k+1}$ and $c_{k+1}$ satisfies Eqs.~\eqref{eq:op_pi_inequality_c} and \eqref{eq:op_pi_inequality_b}, respectively. The inequality $c^*\leq c_{k+1}\leq \overline{c}_{k+1}$ will be shown afterwards.
    
    By the monotonicity of $G$, the inequality $G(\overline{c}_k)\leq \overline{c}_k$ implies that $G^2(\overline{c}_k)\leq G(\overline{c}_k)$. Since $\overline c_{k+1}=G(\overline{c}_k)$, we have that $\overline{c}_{k+1}$ satisfies Eq.~\eqref{eq:op_pi_inequality_c}. Next, we show that $c_{k+1}$ satisfies Eq.~\eqref{eq:op_pi_inequality_b}. By the definition of $\mu^{k}$, we have that $G_{\mu^{k}}(c_k)=G(c_k)\leq c_k$. Since $G_{\mu^{k}}$ is monotone, applying $G_{\mu^{k}}$ multiple times on both sides of $G_{\mu^{k}}(c_k)\leq c_k$ preserves the inequality. This yields
    \begin{equation}
    \label{eq:g_m_kp1}
        G_{\mu^{k}}^{\ell_k+1}(c_k)\leq G_{\mu^{k}}^{\ell_k}(c_k)\leq G_{\mu^{k}}(c_k)
    \end{equation}
    From Eq.~\eqref{eq:op_pi_cost_dete}, we have that $c_{k+1}=G_{\m^{k+1}}^{\ell_k}(c_k)$. As a result, the first inequality in Eq.~\eqref{eq:g_m_kp1} can be written as
    $$G_{\mu^{k}}(c_{k+1})\leq c_{k+1}.$$
    Given that $G(c_{k+1})\leq G_{\mu^{k}}(c_{k+1})$, [cf. Eq.~\eqref{eq:g_le_gm}], we have $G(c_{k+1})\leq c_{k+1}$.

    Next, we show that Eq.~\eqref{eq:op_pi_inequality_a} holds for $k+1$. Applying similar arguments to the ones used to show $c^*\leq c_0$, it follows that $G(c_{k+1})\leq c_{k+1}$ implies $c^*\leq c_{k+1}$. To show the last remaining inequality, $c_{k+1}\leq \overline{c}_{k+1}$, we apply $G$ to  both sides of the inequality $c_k\leq \overline{c}_k$. We obtain 
    \begin{equation}
    \label{eq:g_mk_l}
        G_{\mu^{k}}(c_k)=G(c_k)\leq G(\overline{c}_k)=\overline{c}_{k+1},
    \end{equation}
    where the first and the second equalities follow from the definitions of $\mu^{k}$ and $\overline{c}_{k+1}$, respectively. Combining the second inequality in Eq.~\eqref{eq:g_m_kp1} with Eq.~\eqref{eq:g_mk_l} yields
    $$c_{k+1}=G_{\mu^{k}}^{\ell_k}(c_k)\leq G_{\mu^{k}}(c_k)\leq\overline{c}_{k+1}.$$
    This concludes the induction proof of Eqs.~\eqref{eq:op_pi_inequality_c}-\eqref{eq:op_pi_inequality_a}.
    
    To prove the convergence of $\{c_k\}$, we note that by Prop.~\ref{prop:vi_dete}, we have $\overline{c}_k\to c^*$. In view of Eq.~\eqref{eq:op_pi_inequality_a}, this implies $c_k\to c^*$. By the definition of $\mu^k$, we have that $G_{\mu^{k}}(c_k)=G(c_k)\leq c_k$, which implies $c_{\mu^k}\leq c_k$; cf. Prop.~\ref{prop:classical}(b). In addition, we also have $c^*\leq c_{\mu^{k}}$. As a result, $c_k\to c^*$ implies $c_{\mu^{k}}\to c^*$.
\end{proof}

\subsection{A Computational Approach Based on Mathematical Programming}
A well-known computational method in infinite horizon DP is based on solving a mathematical programming problem, where the constraint 
\begin{equation}
    \label{eq:prog_constraint}
    J(x)\leq\min_{u\in U(x)}\Big[g(x,u)+\alpha J\big(f(x,u)\big)\Big],\qquad\text{for all }x,
\end{equation}
is imposed on the functions $J$. In this section, we will show how this method can be applied to our semilinear DP problem, in a way that takes advantage of its structure. In particular, we will formulate a mathematical programming problem in the space of the $n$-dimensional parameters $c$, rather than in the infinite-dimensional space of cost functions $J:X\mapsto \re$. This approach has already been applied in the paper by Rantzer \cite{rantzer2022explicit} to the positive linear problem discussed in Example~\ref{eg:pos_lin} (see also the paper by Li and Rantzer \cite{li2024exact}), and it will be generalized to our semilinear DP problem in this section.

Similar to the VI and PI algorithms discussed earlier, we express the mathematical programming problem in terms of the parameter vector as follows:
\begin{subequations}
\label{eq:op_dete}
    \begin{align}
	\max_{c}& \quad \sum_{i=1}^nc^i\\
	\mathrm{s.\,t.} & \quad c\leq G(c),\label{eq:op_bellman_dete}\\
	& \quad  c\in \Re^n_+,\label{eq:op_range_dete}
	\end{align}
\end{subequations} 
where $c^i$ denotes the $i$th component of $c$. Note that this problem is convex, because $G$ is a concave function, since it is defined as the minimum of linear functions. We next show that the maximum is attained at the parameter  $c^*$ of the optimal cost function, i.e., $J^*(x)=(c^*)'x$.

\begin{proposition}
    \label{prop:op_dete}
    The optimal parameter vector $c^*$ is the unique optimal solution of the mathematical programming problem \eqref{eq:op_dete}. 
\end{proposition}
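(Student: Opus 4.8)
The plan is to prove that every feasible vector $c$ of the program~\eqref{eq:op_dete} satisfies $c\le c^*$ componentwise. Granting this, the result follows at once: $c^*$ is feasible, since $c^*=G(c^*)$ by Prop.~\ref{prop:unique_dete} (so in particular $c^*\le G(c^*)$) and $c^*\in\Re^n_+$; hence $\sum_{i=1}^n c^i\le\sum_{i=1}^n(c^*)^i$ for every feasible $c$, so $c^*$ is optimal; and if $\bar c$ is any optimal solution, then $\bar c\le c^*$ together with $\sum_i\bar c^i=\sum_i(c^*)^i$ forces each nonnegative number $(c^*)^i-\bar c^i$ to vanish, i.e., $\bar c=c^*$.

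To show $c\le c^*$ for feasible $c$, I would first obtain an a priori upper bound on $c$ using the stable policy $\hat\mu$ of Assumption~\ref{asm:dete_well}(b). Since $\hat\mu\in\widehat{\cal M}$, Eq.~\eqref{eq:g_le_gm} gives $c\le G(c)\le G_{\hat\mu}(c)=q_{\hat\mu}+\alpha A_{\hat\mu}'c$, whence $(I-\alpha A_{\hat\mu}')c\le q_{\hat\mu}$. Because $\alpha A_{\hat\mu}$ is stable, $(I-\alpha A_{\hat\mu}')^{-1}=\sum_{j=0}^\infty(\alpha A_{\hat\mu}')^j$ exists and is a nonnegative matrix, so multiplying the inequality through by it preserves the inequality and yields $c\le(I-\alpha A_{\hat\mu}')^{-1}q_{\hat\mu}=c_{\hat\mu}$, where $c_{\hat\mu}'=q_{\hat\mu}'(I-\alpha A_{\hat\mu})^{-1}$ is the parameter vector of $J_{\hat\mu}$; cf.\ Eq.~\eqref{eq:pi_eva_pos}.

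Next I would run VI from $c$: set $c_0=c$ and $c_{k+1}=G(c_k)$. Feasibility gives $c_0\le G(c_0)=c_1$, and then monotonicity of $G$ makes $\{c_k\}$ nondecreasing. It is also bounded above by $c_{\hat\mu}$: by induction, $c_k\le c_{\hat\mu}$ implies $c_{k+1}=G(c_k)\le G(c_{\hat\mu})\le G_{\hat\mu}(c_{\hat\mu})=c_{\hat\mu}$, using monotonicity of $G$, Eq.~\eqref{eq:g_le_gm}, and Bellman's equation for $\hat\mu$ (equivalently $c_{\hat\mu}=q_{\hat\mu}+\alpha A_{\hat\mu}'c_{\hat\mu}$). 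Hence $c_k$ converges to some $c_\infty\in\Re^n_+$, and letting $k\to\infty$ in $c_{k+1}=G(c_k)$ with $G$ continuous (Assumption~\ref{asm:dete_well}(a)) gives $c_\infty=G(c_\infty)$. By the uniqueness assertion of Prop.~\ref{prop:unique_dete}, $c_\infty=c^*$, so $c=c_0\le c_\infty=c^*$, completing the argument.

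I expect the one genuinely delicate point to be the boundedness of the VI iterates started from an \emph{arbitrary} feasible $c$: without it one cannot pass to the limit and identify $c_\infty$ with $c^*$ via uniqueness. This is precisely where Assumption~\ref{asm:dete_well}(b) is needed --- the stable policy $\hat\mu$ furnishes the dominating vector $c_{\hat\mu}$. Note that the subsolution inequality $c\le G(c)$ by itself does not bound $c$ (for instance when $\alpha=1$), so this is not a contraction-mapping argument; it relies essentially on monotonicity together with the existence of a stabilizing policy.
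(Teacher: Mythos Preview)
Your proof is correct and follows the same strategy as the paper: show that every feasible $c$ satisfies $c\le c^*$ by running VI from $c_0=c$, observing that the sequence is nondecreasing (from $c\le G(c)$) and converges to $c^*$. The only difference is that the paper dispatches the convergence step in one line by invoking Prop.~\ref{prop:vi_dete} (VI converges to $c^*$ from \emph{any} $c_0\in\Re^n_+$), whereas you reprove this convergence from scratch via an explicit upper bound $c_{\hat\mu}$ and monotone convergence plus continuity plus uniqueness. Your self-contained argument is fine, but the a~priori bound via $\hat\mu$ and the induction $c_k\le c_{\hat\mu}$ are not needed once Prop.~\ref{prop:vi_dete} is available; the ``delicate point'' you flag has already been absorbed there.
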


\begin{proof}
    Let  $c$ be a  feasible vector and consider the sequence $\{c_k\}$ generated by $c_{k+1}=G(c_k)$ with $c_0=c$. By Prop.~\ref{prop:vi_dete}, $\{c_k\}$ is monotonically increasing and converges to $c^*$, where $c^*$ is the unique nonnegative vector satisfying $c^*=G(c^*)$, and $J^*(x)=(c^*)'x$ for all $x\in X$. Moreover, $c^*$ is a feasible vector. Therefore, the maximum is attained at $c^*$. The uniqueness assertion follows from Prop.~\ref{prop:unique_dete}.
\end{proof}

\section{Stochastic Positive Semilinear Problems and Certainty Equivalence}\label{sec:stochastic}
Let us consider a stochastic extension of the problem of Section~\ref{sec:dete_formulation}. We introduce a set $\Theta$ of parameters,  whose elements are generically denoted by $\theta$. At each stage $k$, a parameter $\theta_{k}$ is generated according to a known stationary distribution, independently of the preceding parameters $\theta_0,\ldots,\theta_{k-1}$. The parameter $\theta_{k}$ affects  the evolution of both the state and the cost at stage $k$.\footnote{We do not address measurability issues in our subsequent problem formulation. Alternatively, we can assume that $\Theta$ is a countable set, in which case measurability is of no essential concern.} 

Suppose that $X$ is a given subset of $\Re_+^n$. The stochastic problem is defined as follows: for every state $x_0\in X$, solve
\begin{equation}
    \label{eq:proq_stoch}
    \begin{aligned}
        \min_{\{\mu_k\}_{k=0}^{\infty}}& \qquad \lim_{N\to\infty}\mathop{E}_{\substack{\theta_k \\ k=0,\dots,N-1}}\Bigg\{\sum_{k=0}^{N-1} \alpha^k g\big(x_k,\mu_k(x_k),\theta_{k}\big)\Bigg\}\\
	\mathrm{s.\,t.} & \qquad x_{k+1}=f\big(x_k,\mu_k(x_k),\theta_{k}\big),\quad k = 0,1,...,\\
	& \qquad \mu_k(x_k)\in U(x_k),\quad k = 0,1,...,
    \end{aligned}
\end{equation}
where $U(x_k)$ are nonempty control constraint sets. The functions $\mu_k$, $k=0,1,\dots$, are policies, i.e., they map states $x_k$ to elements in $U(x_k)$. We denote by $J^*(x_0)$ the optimal cost of problem \eqref{eq:proq_stoch} starting from $x_0$.

Similar to the deterministic problem considered earlier, we assume that for all $x\in X$, $u\in U(x)$, $\theta\in \Theta$, the functions $f$ and $g$ satisfy
\begin{equation}
    \label{eq:state_stoch}
    f(x,u,\theta)\in X,\quad E_\theta\big\{f(x,u,\theta)\big\}\in X,\qquad g(x,u,\theta)\ge0.
\end{equation}
We focus on the set of nonnegative linear cost functions $\widehat{\cal J}$ and a corresponding set of policies $\widehat {\cal M}$. In particular, we require that for every $\mu\in \widehat{\cal M}$ and $\theta\in \Theta$, there exists an $n\times n$ nonnegative matrix $A_\m^\theta$, and an $n$-dimensional vector $q_\m^\theta$ such that
\begin{equation}
    \label{eq:semil_cond_stoch}
    f\big(x,\m(x),\theta\big)=A_\m^\theta x,\qquad g\big(x,\m(x),\theta\big)=(q_\m^\theta)'x.
\end{equation}

The stochastic terms involving $\theta$ in Eq.\ \eqref{eq:semil_cond_stoch} are usually  referred to as \emph{multiplicative} (as opposed to additive) in the literature. Linear-quadratic problems with multiplicative random terms were studied by Wonham \cite{wonham1967optimal}, with subsequent works in economics and engineering including \cite{chow1973effect,speyer1974stochastic,athans1977uncertainty,ku1977further,rami2002discrete,zhang2015linear}. Some recent works are \cite{gravell2020learning,xing2022identification,pang2022robust,granzotto2024stability}.

For our stochastic problem \eqref{eq:proq_stoch}, we make the following standing assumption, which is patterned after the deterministic assumption of Section~\ref{sec:dete_formulation}.

\begin{assumption}\label{asm:stoch_well}
\begin{itemize}
    \item[(a)] \emph{Closure and Attainability}: The set of nonnegative linear functions $\widehat{\cal J}$ is closed under VI in the sense that 
for every $c\in\re^n_+$, the function
$$\min_{u\in U(x)}E_\theta\Big\{g(x,u,\theta)+\alpha c'f(x,u,\theta)\Big\}$$
belongs to  $\widehat{\cal J}$, i.e., it has the form $\hat c'x$ for some unique $\hat c\ge0$. Furthermore, $\hat c$ depends continuously on $c$. Moreover, there is a policy $\m\in\widehat {\cal M}$ that attains the minimum above, in the sense that
$$\m(x)\in\arg\min_{u\in U(x)}E_\theta\Big\{g(x,u,\theta)+\alpha c'f(x,u,\theta)\Big\},\quad \text{for all }x\in X.$$
\item[(b)] \emph{Stabilizability}: There exists a policy $\hat{\mu}\in \widehat {\cal M}$ such that $\alpha E_\theta\{A_{\hat{\mu}}^\theta\}$ is stable.
\item[(c)] \emph{State Space Structure}: The state space $X$ has the property that for all $v\in \Re_+^n$, there exists an $x\in X$ and a scalar $s\geq0$ such that $sx= v$. 
\item[(d)] \emph{Observability}: There exists an integer $N$ such that the optimal cost of the $N$-stage version of the problem [the problem of minimizing the cost 
$$\mathop{E}_{\substack{\theta_k \\k=0,\dots,N-1}}\Bigg\{\sum_{k=0}^{N-1}\alpha^kg(x_k,u_k,\theta_k)\Bigg\},$$
starting from any nonzero initial state $x_0\in X$] is strictly positive.
 \end{itemize}
\end{assumption}

Before considering the solution to problem \eqref{eq:proq_stoch}, let us provide an example to illustrate Assumption~\ref{asm:stoch_well}(a). Other examples that involve bilinear systems with stochastic parameters can be found in \cite[Exercises 14, 15, p.~68]{bertsekas1976dynamic}. See also \cite[Exercises~1.21, 1.22]{bertsekas2017dynamic}.

\begin{example}[Stochastic Positive Linear System]\label{eg:pos_lin_stoch} Let us consider the stochastic version of the problem of Example~\ref{eg:pos_lin}. Given a state $x_k$ and control $u_k$, a random parameter $\theta_k$ that belongs to a given set $\Theta$ is generated according to a given distribution, and is independent from the parameters of previous stages, and $(x_k,u_k)$. The system equation is given by
$$x_{k+1}=A^{\theta_k}x_k+B^{\theta_k}u_k,$$
where for each $\theta\in \Theta$, $A^\theta$ and $B^\theta$ are $n\times n$ and $n\times m$ matrices, respectively.\footnote{Without loss of generality, we assume both $A^\theta$ and $B^\theta$ depend on the same random variable $\theta$. If they depend on different random quantities $\xi$ and $\zeta$, we can redefine $\theta=(\xi,\zeta)$.} The $k$th stage cost for applying $u_k$ at $x_k$ with random parameter $\theta_k$ is given by
$$q_{\theta_k}'x_k+r_{\theta_k}'u_k,$$
where for every $\theta$, $q_{\theta}\in \re^n_+$ and $r_{\theta}\in \re^m$. The control $u$ at  state $x$ is selected from the set $U(x)=\{u\in\re^m\,|\,|u|\leq Hx\}$, where the matrix $H$ is known. As in Example~\ref{eg:pos_lin}, under suitable conditions we have that starting with $x_0\in \re_+^n$, all the subsequent states $x_k$, $k\ge 1$, remain in $\re_+^n$, regardless of the choices of $u_k$.

We consider the same set of linear policies $\widehat{\cal M}$ as in Example~\ref{eg:pos_lin}. Let us apply VI starting with $J(x)=c'x$, where $c\in \re_+^n$. It produces the function
\begin{equation}
\label{eq:bellman_pos_lin_st}
    \hat{J}(x)=(q+A'c)x+\min_{|u|\leq Hx}(r+B'c)'u,
\end{equation}
where $q=E_\theta\big\{q_\theta\big\}$, $r=E_\theta\big\{r_\theta\big\}$, $A=E_\theta\big\{A^\theta\big\}$, and $B=E_\theta\big\{B^\theta\big\}$. 
It can be seen that Eq.~\eqref{eq:bellman_pos_lin_st} is identical to Eq.~\eqref{eq:bellman_pos_lin}. As a result, we have $\hat{J}(x)=\hat{c}'x$, where $\hat{c}$ is computed using the formula given in Example~\ref{eg:pos_lin}. Moreover, the policy $\m$ that minimizes the right side of Eq.~\eqref{eq:bellman_pos_lin_st} is identical to the policy of the deterministic problem in Example~\ref{eg:pos_lin}. 
\end{example}

In the preceding example, we verified Assumption~\ref{asm:stoch_well}(a) by demonstrating a close relation between the stochastic problem and its deterministic counterpart: the VI algorithm produces identical iterates for both the stochastic and the deterministic problems when starting with the same linear function, and the policies that attain the minima in the VI calculation are the same. These are manifestations of the well-known \emph{certainty equivalence principle} (CEP for short) of this stochastic problem. The meaning of CEP is that the stochastic problem can be solved by addressing a deterministic problem, which is obtained by setting all random variables to their expected values. For linear quadratic problems with additive noise, CEP was first discussed by Simon \cite{simon1956dynamic}, and was also discussed by Wonham for linear-quadratic problems with  multiplicative noise \cite{wonham1967optimal}.  An example of CEP for semilinear finite horizon problems was given in Section~\ref{sec:intro}. We will establish CEP for problem \eqref{eq:proq_stoch}. As a result, the computational approaches developed in Section~\ref{sec:algo_dete} can be brought to bear. 

Let us introduce some notation. For every policy $\mu\in \widehat{\cal M}$, we define an $n\times n$ matrix $A_\mu$ and an $n$-dimensional vector $q_\mu$ as follows:
\begin{equation}
    \label{eq:semi_lin_stoch}
    A_\mu=E_\theta\Big\{A_\mu^\theta\Big\},\qquad q_\mu=E_\theta\Big\{q_\mu^\theta\Big\}.
\end{equation}
Since $A_\m^\theta$ and $q_\m^\theta$ are nonnegative for all $\theta$, it follows that $A_\mu$ and $q_\m$ are nonnegative. Moreover, combining Eqs.~\eqref{eq:semil_cond_stoch} and \eqref{eq:semi_lin_stoch}, we have that for every $\mu\in \widehat{\cal M}$
$$E_\theta\Big\{f\big(x,\m(x),\theta\big)\Big\}=A_\m x,\quad E_\theta\Big\{g\big(x,\m(x),\theta\big)\Big\}=(q_\m)'x,\qquad \text{for all }x.$$
For every $\mu\in \widehat{\cal M}$, let us consider the mapping $\widehat{G}_\m:\re_+^n\mapsto\re_+^n$, given by
$$\widehat{G}_\m(c)=q_\m+\alpha A_\m'c.$$
In view of Assumption~\ref{asm:stoch_well}(a), the VI algorithm, applied to functions $J\in\widehat{\cal J}$ of the form $J(x)=c'x$, defines uniquely a function $\widehat{G}:\re_+^n\mapsto\re_+^n$ through the equation
$$\widehat{G}(c)'x=\min_{u\in U(x)}E_\theta\big\{g(x,u,\theta)+\alpha c'f(x,u,\theta)\big\}=\min_{\mu\in\widehat {\cal M}}{\widehat{G}_{\mu}(c)'x},\quad \text{for all }x\in X.$$

Let us now address the stochastic problem \eqref{eq:proq_stoch} by considering its deterministic counterpart. To this end, we introduce functions $\hat{f}$ and $\hat{g}$ defined as
\begin{equation}
    \label{eq:mean_nl_dy}
    \hat{f}(x,u)=E_\theta\big\{f(x,u,\theta)\big\},\qquad  \hat{g}(x,u)=E_\theta\big\{g(x,u,\theta)\big\}.
\end{equation}
Similar to Section~\ref{sec:dete_formulation}, we consider the deterministic problem involving $\hat{f}$ and $\hat{g}$:
\begin{equation}
    \label{eq:proq_stoch_equiv}
    \begin{aligned}
        \min_{\{u_k\}_{k=0}^{\infty}}& \quad \sum_{k=0}^{\infty} \alpha^k\hat{g}(x_k,u_k)\\
	\mathrm{s.\,t.} & \quad x_{k+1}=\hat{f}(x_k,u_k),\ \ k = 0,1,...,\\
	& \quad u_k\in U(x_k),\ \ k = 0,1,...,
    \end{aligned}
\end{equation}
where $x_0\in X$ is given. Let us denote by $\hat{J}^*(x_0)$ the optimal cost starting from $x_0$. The following proposition shows that it suffices to solve the deterministic problem \eqref{eq:proq_stoch_equiv}.

\begin{proposition}[Certainty Equivalence]\label{prop:stoch_ce}

    \begin{itemize}
        \item[(a)] There exists a vector $c^*\in\re^n_+$ such that $J^*(x)=\hat{J}^*(x)=(c^*)'x$ for all $x\in X$. Moreover, $c^*$ is the unique vector within $\Re^n_+$ that satisfies
        $$c^*=\widehat{G}(c^*).$$
        \item[(b)] A policy $\mu\in \widehat{\cal M}$ is optimal for the deterministic problem  \eqref{eq:proq_stoch_equiv} if and only if it is optimal for the stochastic problem \eqref{eq:proq_stoch}. Moreover, there exists at least one optimal policy.
    \end{itemize}
\end{proposition}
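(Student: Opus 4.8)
The plan is to reduce the stochastic problem to the deterministic mean-system problem \eqref{eq:proq_stoch_equiv} and then invoke Propositions \ref{prop:unique_dete} and \ref{prop:optimal_policy}. The pivotal observation is that, since expectation is linear and $\widehat{\cal J}$ consists of linear functions, the value-iteration operators of the two problems agree on $\widehat{\cal J}$: for $J(x)=c'x$ we have $E_\theta\{g(x,u,\theta)+\alpha c'f(x,u,\theta)\}=\hat g(x,u)+\alpha c'\hat f(x,u)$, so by Assumption \ref{asm:stoch_well}(a) the minimizations over $u\in U(x)$ of the two sides produce the same function $(\widehat{G}(c))'x$ and share a minimizing policy in $\widehat{\cal M}$. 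I would first record two consequences. (i) The deterministic problem \eqref{eq:proq_stoch_equiv}, which is well posed because $\hat f(x,u)\in X$ by \eqref{eq:state_stoch}, satisfies Assumption \ref{asm:dete_well} with matrices $A_\mu=E_\theta\{A_\mu^\theta\}$, $q_\mu=E_\theta\{q_\mu^\theta\}$ and Bellman operator $\widehat{G}$: parts (a)--(c) transcribe Assumption \ref{asm:stoch_well}(a)--(c) directly, and part (d) follows because the finite-horizon DP recursion, restricted to linear cost-to-go functions, reads $c\mapsto\widehat{G}(c)$ for both problems, so that their $N$-stage optimal costs are both equal to $(\widehat{G}^{N}(0))'x$, which is positive for $x\ne0$ by Assumption \ref{asm:stoch_well}(d). (ii) Because $\widehat{G}(c^*)=c^*$, the linear function $J_\infty(x)=(c^*)'x$, with $c^*$ the fixed point supplied by Proposition \ref{prop:unique_dete} applied to \eqref{eq:proq_stoch_equiv}, solves the stochastic Bellman equation.

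The second ingredient is the identity $J_\mu(x)=\hat J_\mu(x)$ for every $\mu\in\widehat{\cal M}$ and every $x\in X$, as elements of $[0,\infty]$, where $J_\mu$ denotes the cost of the stationary policy $\mu$ in \eqref{eq:proq_stoch}. Under $\mu$ the closed-loop state obeys $x_{k+1}=A_\mu^{\theta_k}x_k$ and the stage cost is $(q_\mu^{\theta_k})'x_k$, all nonnegative, so by Tonelli's theorem $J_\mu(x_0)=\sum_{k=0}^{\infty}\alpha^k E\{(q_\mu^{\theta_k})'x_k\}$; since $\theta_k$ is independent of $\theta_0,\dots,\theta_{k-1}$ and hence of $x_k$, we get $E\{(q_\mu^{\theta_k})'x_k\}=q_\mu' E\{x_k\}=q_\mu'A_\mu^{k}x_0$, and the sum equals $\hat J_\mu(x_0)$. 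This is the step that uses the independence-across-stages hypothesis.

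Part (a) then follows quickly. Proposition \ref{prop:unique_dete} applied to \eqref{eq:proq_stoch_equiv} yields the positive vector $c^*$, its uniqueness as a solution of $c=\widehat{G}(c)$ in $\Re^n_+$, the relation $\hat J^*(x)=(c^*)'x$, and the convergence $\widehat{G}^{N}(0)\to c^*$; Proposition \ref{prop:optimal_policy} yields $\mu^*\in\widehat{\cal M}$ with $\alpha A_{\mu^*}$ stable and $\hat J_{\mu^*}=\hat J^*$. Since the $N$-stage optimal cost of the stochastic problem equals $(\widehat{G}^{N}(0))'x$ and is a lower bound to $J^*$, letting $N\to\infty$ gives $\hat J^*(x)=(c^*)'x\le J^*(x)$; since $\mu^*$ is a feasible stationary policy for \eqref{eq:proq_stoch}, the identity of the previous paragraph gives $J^*(x)\le J_{\mu^*}(x)=\hat J_{\mu^*}(x)=\hat J^*(x)$. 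Hence $J^*=\hat J^*=(c^*)'x$, which is part (a). For part (b), a policy $\mu\in\widehat{\cal M}$ is optimal for \eqref{eq:proq_stoch_equiv} iff $\hat J_\mu=\hat J^*$ on $X$ and optimal for \eqref{eq:proq_stoch} iff $J_\mu=J^*$ on $X$; by $J_\mu=\hat J_\mu$ and part (a) these two conditions coincide, so the two notions of optimality are the same, and $\mu^*$ furnishes an optimal policy for both problems.

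The step I expect to need the most care is the verification of Assumption \ref{asm:dete_well}(d) for the mean system, i.e., making rigorous that the finite-horizon DP recursion collapses to $c\mapsto\widehat{G}(c)$ for both problems so that their $N$-stage optimal costs agree, and, hand in hand with it, the justification via nonnegativity and stagewise independence of the interchanges of expectation with summation in the identity $J_\mu=\hat J_\mu$. Everything else is bookkeeping layered on top of the deterministic results of Section \ref{sec:bellman_dete}.
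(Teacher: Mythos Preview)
Your proof is correct and reaches the same conclusion as the paper, but via a somewhat different route. Both you and the paper begin by verifying that the mean system \eqref{eq:proq_stoch_equiv} inherits Assumption~\ref{asm:dete_well} (so that Propositions~\ref{prop:unique_dete} and~\ref{prop:optimal_policy} apply), and both obtain the lower bound $\hat J^*\le J^*$ from the fact that the $N$-stage optimal cost $(\widehat G^{N}(0))'x$ is the same for the two problems and increases to $(c^*)'x$. The divergence is in the upper bound and in part~(b). The paper never computes $J_\mu$ explicitly; it simply observes that $(c^*)'x$ solves the stochastic Bellman equation and invokes Proposition~\ref{prop:classical}(a) to get $J^*\le \hat J^*$, and then uses the characterization of optimal policies via Proposition~\ref{prop:classical}(c) together with the identity $G=\widehat G$, $G_\mu=\widehat G_\mu$ to transfer optimality back and forth. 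You instead establish the pointwise identity $J_\mu=\hat J_\mu$ for every $\mu\in\widehat{\cal M}$ by a direct Tonelli/independence computation, and everything else follows from that. Your route is more concrete and yields the stronger intermediate statement $J_\mu=\hat J_\mu$ (not needed by the paper), at the price of the explicit mean-dynamics calculation $E\{x_k\}=A_\mu^k x_0$; the paper's route is shorter and stays entirely at the level of the Bellman operators, leaning on the abstract nonnegative-cost DP results in Appendix~\ref{app:a}.
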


\begin{proof}
(a) Since $f(x,u,\theta)\in X,\,E_\theta\big\{f(x,u,\theta)\big\}\in X$, and $g$ is nonnegative [cf. Eq.~\eqref{eq:state_stoch}], it can be seen that $\hat{f}(x,u)\in X$, $\hat{g}(x,u)\geq 0$ for all $x\in X$ and $u\in U(x)$. Moreover, under Assumption~\ref{asm:stoch_well}, the deterministic problem \eqref{eq:proq_stoch_equiv} satisfies Assumption~\ref{asm:dete_well}, with $\hat{f}$ and $\hat{g}$ given by Eq.~\eqref{eq:mean_nl_dy} in place of $f$ and $g$, and with $A_\mu$, $q_\mu$ defined in Eq.~\eqref{eq:semi_lin_stoch}. Consequently, we can apply the results of Sections~\ref{sec:bellman_dete} and \ref{sec:algo_dete} for problem \eqref{eq:proq_stoch_equiv}. In particular, by Prop.~\ref{prop:op_dete}, there exists a unique vector $c^*\in\Re_+^n$ such that
        $$\hat{J}^*(x)=(c^*)'x,\qquad c^*=G(c^*).$$
Furthermore, the operator $G$ for the deterministic problem \eqref{eq:proq_stoch_equiv} is identical to the operator $\widehat{G}$ for the stochastic problem \eqref{eq:proq_stoch}. As a result, $\hat{J}$ satisfies the Bellman equation for the stochastic problem. By Prop.~\ref{prop:classical}(a), we have that $J^*(x)\leq \hat{J}^*(x)$ for all $x\in X$.

Conversely, let us consider the sequence of functions $\{J_k\}$ defined by $J_k(x)=x'c_k$, where $c_0=0$, and $c_{k+1}=\widehat{G}(c_k)$. Since $\widehat{G}$ and $G$ are identical, we have that $c_k\to c^*$ according to Prop.~\ref{prop:vi_dete}. Moreover, similar to the proof of Prop.~\ref{prop:unique_dete}, we can show that  $J_k(x)\leq J^*(x)$. Hence, $\hat{J^*}(x)\leq J^*(x)$. Together with the earlier inequality $J^*(x)\leq \hat{J}^*(x)$, we conclude that $J^*= \hat{J}^*$.
        
(b) From Prop.~\ref{prop:classical}(c) in Appendix~\ref{app:a}, a policy $\mu^*\in \widehat{\cal M}$ is optimal for problem \eqref{eq:proq_stoch_equiv} if and only if $G_{\mu^*}(c^*)=G(c^*)$. Because the operators $G$ and $\widehat{G}$ are identical, and $J^*(x)=(c^*)'x$, we also have $\widehat{G}_{\mu^*}(c^*)=\widehat{G}(c^*)$.  Thus, such a policy is also optimal for the stochastic problem. Moreover, by Prop.~\ref{prop:optimal_policy}, there exists an optimal policy for problem \eqref{eq:proq_stoch_equiv}.

\end{proof}

\section{Extension to Markov Jump Problems and Deterministic Equivalence}\label{sec:markov}
In this section, we consider an extension to the stochastic problem of Section~\ref{sec:stochastic}. Here, again there is a state $x_k$ that takes values in a subset $X$ of $\re^n_+$, and as in the preceding section, there is also a parameter $\theta_k$ at stage $k$. However, we now allow the parameters of different stages to be correlated and to evolve according to a known Markov chain. We also assume that the current value of the parameter is known before each control is selected, and that the parameter set is finite. In particular, we assume that $\theta$ takes values in the set $\Theta=\{1,2,\dots,r\}$, and we denote by $p_{ij}$ the probability of $\theta_{k+1}=j$ given that $\theta_{k}=i$.\footnote{The problems studied in Section~\ref{sec:stochastic} are special cases of the present section when the parameter set $\Theta$ is finite. However, Section~\ref{sec:stochastic} also allows $\Theta$ to be countably infinite, a case not covered in the present section.}

\begin{figure}
    \centering
    \includegraphics[width=\linewidth]{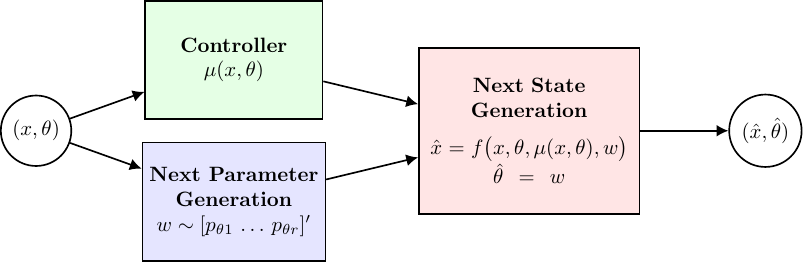}
    \caption{Evolution of the augmented state under a policy $\mu$ in the Markov jump problem. Starting from an augmented state $(x,\theta)$, the controller applies $\mu(x,\theta)$. Meanwhile, the next‐stage parameter $w$ is drawn according to the distribution $[p_{\theta1}\,\dots\,p_{\theta r}]'$. Then, the next state $(\hat x,\hat \theta)$ is generated according to $\hat x= f(x,\theta,\mu(x,\theta),w)$ and $\hat\theta=w$.}
    \label{fig:mj_general}
\end{figure}
 
To capture the dependence between successive parameter values, we introduce  an augmented state 
$(x_k,\theta_k)\in X\times \Theta$, and policies $\m_k(x_k,\theta_k)$ that depend on this augmented state. To this end, we must define a stochastic dynamic system that describes the evolution from the augmented state $(x_k,\theta_k)$ to the next augmented state $(x_{k+1},\theta_{k+1})$. We thus introduce a stochastic variable $w_k$ that models the next value of the parameter, $\theta_{k+1}=w_k$, and evolves according to the given Markov chain transition probabilities; see Fig.~\ref{fig:mj_general}.

Then the problem of this section is defined as follows: for every augmented state $(x_0,\theta_0)\in X\times \Theta$, solve
\begin{equation}
    \label{eq:proq_markov}
    \begin{aligned}
        \min_{\{\mu_k\}_{k=0}^{\infty}}& \qquad \lim_{N\to\infty}\mathop{E}_{\substack{w_k \\k=0,\dots,N-1}}\Bigg\{\sum_{k=0}^{N-1} g\big(x_k,\theta_k,\mu_k(x_k,\theta_k),w_{k}\big)\Bigg\}\\
	\mathrm{s.\,t.} & \qquad x_{k+1}=f\big(x_k,\theta_k,\mu_k(x_k,\theta_k),w_{k}\big),\quad k = 0,1,...,\\
    & \qquad\theta_{k+1} = w_k,\quad k = 0,1,...,\\
	& \qquad \mu_k(x_k,\theta_k)\in U(x_k,\theta_k),\quad k = 0,1,...,
    \end{aligned}
\end{equation}
where for all $x\in X,\, u\in U(x,\theta),\,\theta,\,w\in \Theta$, the function $f$ satisfies
\begin{equation}
    \label{eq:state_markov}
    f(x,\theta,u,w)\in X,\qquad E_{w}\big\{f(x,\theta,u,w)\,|\,\theta\big\}\in X,
\end{equation}
and the cost per stage $g$ satisfies
\begin{equation}
    \label{eq:cost_markov}
    g(x,\theta,u,w)\geq 0.
\end{equation}

Problems such as the one of Eq.~\eqref{eq:proq_markov} are often referred to as \emph{Markov jump} problems, since the parameter $\theta_k$ evolves according to a given Markov chain. Markov jump problems with a linear state equation and a quadratic cost were first addressed by Krasovsky and Lidskii \cite{krasovsky1961}. Similar problems were studied later by Sworder \cite{sworder1969feedback} and by Wonham \cite{wonham1970random}. Among subsequent papers, we mention \cite{chizeck1986discrete}, \cite{abou1995solution}, and \cite{costa2002weak}. For related monographs, see \cite{mariton1990jump,costa2005discrete}.

Similar to our earlier analysis, the semilinear problem of this section involves a set of functions $\widehat{\cal J}$ and a corresponding subset of stationary policies $\widehat{\cal M}$. In particular, each $J\in \widehat{\cal J}$ has the form $J(x,\theta)=c(\theta)'x$ with $c$ belonging to the set of functions that map $\Theta$ to $\Re_+^n$, which we denote by  $\mathcal{C}$:
$$\mathcal{C}=\big\{c\,|\,c=\big(c(1),\dots,c(r)\big),\ c(\theta)\in \re_+^n\hbox{ for all }\theta=1,\dots,r\big\}.$$

In analogy with earlier sections, we assume that for every $\mu\in \widehat{\cal M}$,  $\theta,\,w\in \Theta$, there exists an $n\times n$ nonnegative matrix $A_\mu^{\theta w}$, and an $n$-dimensional nonnegative vector $q_\mu^{\theta w}$ such that  
\begin{equation}
        \label{eq:dy_markov_dete}
        f\big(x,\theta,\mu(x,\theta),w\big)=A_{\mu}^{\theta w} x,\quad g\Big(x,\theta,\mu(x,\theta),w\Big)=(q_{\mu}^{\theta w})'x,\qquad\hbox{for all }x.
    \end{equation}

We will show that a different type of equivalence holds for the Markov jump problem \eqref{eq:proq_markov}, and that the deterministic equivalent problem has state dimension $rn$, where $r$ is the number of parameters, and $n$ is the dimension of the vector $x$. In particular, we will show that each policy $\mu\in\widehat{\cal M}$ of the deterministic problem is characterized by the $rn\times rn$ matrix $\Bar{A}_\mu$ and the $rn$-dimensional vector $\Bar{q}_\mu$ given by
\begin{equation}
    \label{eq:a_mu_markov_dete}
    \Bar{A}_\mu=\begin{bmatrix}
p_{11}A_\mu^{11} & p_{21}A_\mu^{21} & \cdots & p_{r1}A_\mu^{r1}\\
p_{12}A_\mu^{12} & p_{22}A_\mu^{22} & \cdots & p_{r2}A_\mu^{r2}\\
\vdots  & \vdots  & \ddots & \vdots  \\
p_{1r}A_\mu^{1r} & p_{2r}A_\mu^{2r} & \cdots & p_{rr}A_\mu^{rr}
\end{bmatrix},\qquad \Bar{q}_\mu=\begin{bmatrix}
    E_{w}\Big\{q_{\mu}^{1w}\,\big|\,\theta=1\Big\}\\
    E_{w}\Big\{q_{\mu}^{2w}\,\big|\,\theta=2\Big\}\\
    \vdots\\
    E_{w}\Big\{q_{\mu}^{rw}\,\big|\,\theta=r\Big\}
\end{bmatrix}.
\end{equation}
This parallels the analysis of Section~\ref{sec:dete_formulation}, where each policy $\mu\in\widehat{\cal M}$ was characterized by a matrix $A_{\mu}$ and a vector $q_{\mu}$, cf. Eq. \eqref{eq:semil_cond}. For the Markov jump problem \eqref{eq:proq_markov},  $\Bar{A}_\mu$ and $\Bar{q}_\mu$ will play the same role as the one played by $A_\mu$ and $q_\mu$ in Section~\ref{sec:dete_formulation}.\footnote{Deterministic reformulations also appear in the study of positive Markov jump systems, where the evolution of first- and second-order moments is described by higher-dimensional deterministic linear systems; see, e.g., \cite{bolzern2015positive}. The objective in that literature is to analyze stability and positivity through moment dynamics, which is different from our goal of establishing a semilinear DP equivalence for optimal control. A further context where deterministic reformulations are employed is optimal filter design, a connection that we return to at the end of this section.}

Similar to the deterministic and the stochastic problems discussed in Sections 2 and 5, we make the following standing assumption for this section.
\begin{assumption}\label{asm:markov_well}
\begin{itemize}
    \item[(a)] \emph{Closure and Attainability}: The set of nonnegative functions $\widehat{\cal J}$ is closed under VI in the sense that 
for every $c\in{\cal C}$, the function
$$\min_{u\in U(x,\theta)}E_{w}\Big\{g(x,\theta,u,{w})+\alpha c({w})'f(x,\theta,u,{w})\,\big|\,\theta\Big\}$$
belongs to $\widehat{\cal J}$, i.e., it has the form ${\hat c}(\theta)'x$ for some unique $\hat c\in{\cal C}$. Furthermore, $\hat c$ depends continuously on $c$. Moreover, for every $\theta\in \Theta$, there is a policy $\m\in\widehat {\cal M}$ that attains the minimum above, in the sense that
$$\m(x,\theta)\in\arg\min_{u\in U(x,\theta)} E_{w}\Big\{g(x,\theta,u,{w})+\alpha c({w})'f(x,\theta,u,{w})\,\big|\,\theta\Big\},$$
for all $x\in X$.
\item[(b)] \emph{Stabilizability}: There exists a policy $\hat{\mu}\in \widehat {\cal M}$ such that $\alpha \Bar{A}_{\hat{\mu}}$ is stable.
\item[(c)] \emph{State Space Structure}: The state space $X$ has the property that for some scalar $s>0$, if $x\in\re_+^n$ and $\sum_{i=1}^nx^i\leq s$, where $x^i$ denotes the $i$th component of $x$, then $x\in X$.  
\item[(d)] \emph{Observability}: There exists an integer $N$ such that the optimal cost of the $N$-stage version of the problem [the problem of minimizing the cost 
$$\mathop{E}_{\substack{w_k \\k=0,\dots,N-1}}\Bigg\{\sum_{k=0}^{N-1}\alpha^kg(x_k,\theta_k,u_k,w_{k})\Bigg\},$$
starting from any nonzero initial state $x_0\in X$ and any parameter $\theta_0\in \Theta$] is strictly positive.
\item[(e)] \emph{Cartesian Product Condition}: For every $\mu^1,\dots,\mu^r\in \widehat{\cal M}$, the policy that applies the control $\mu^\theta(x,\theta)$ at an augmented state $(x,\theta)$, $x\in X$, $\theta=1,\ldots,r$, belongs to $\widehat{\cal M}$.
 \end{itemize}
\end{assumption}

Note that part (e) of the preceding assumption has no counterpart in Assumptions~\ref{asm:dete_well} and \ref{asm:stoch_well}. To illustrate this part, let us consider policies $\mu^1,\dots,\mu^r\in \widehat{\cal M}$ and for a given state $x\in X$, the matrix of controls
\begin{equation}
    \begin{bmatrix}
\mu^1(x,1) & \mu^2(x,1) & \cdots & \mu^r(x,1)\\
\mu^1(x,2) & \mu^2(x,2) & \cdots & \mu^r(x,2)\\
\vdots  & \vdots  & \ddots & \vdots  \\
\mu^1(x,r) & \mu^2(x,r) & \cdots & \mu^r(x,r)
\end{bmatrix}.\nonumber
\end{equation}
Then Assumption~\ref{asm:markov_well}(e) requires that the policy obtained from the diagonal of this matrix [the one that applies $\mu^\theta(x,\theta)$ at augmented state $(x,\theta)$] belongs to $\widehat{\cal M}$. 

Let us illustrate the problem and Assumption~\ref{asm:markov_well} through the Markov jump extension of Example~\ref{eg:pos_lin}.

\begin{example}[Markov Jump Positive Linear Systems]
    We consider a Markov jump problem involving a positive linear system, and to simplify the presentation, we assume that there are just two parameters: $\Theta=\{1,2\}$. Suppose that the current state is $(x_k,\theta_k)$. Then the state equation is given by 
    $$x_{k+1}=A^{\theta_k}x_k+B^{\theta_k}u_k,$$
    where $A^{\theta}$ and $B^{\theta}$, $\theta=1,2$, are given $n\times n$ and $n\times m$ matrices, respectively. The parameter $\theta_{k+1}$ is generated according to the transition probabilities $p_{\theta_k\theta_{k+1}}$. The cost per stage and the control constraint sets are the same as in Example~\ref{eg:pos_lin}:
    $$q'x_k+r'u_k,\quad U(x)=\{u\in\re^m\,|\,|u|\leq Hx\},$$
    where $q\in\re^n$, $r\in\re^m$, and $H$ is an $m\times n$ nonnegative matrix.

\begin{figure}
    \centering
    \includegraphics[width=\linewidth]{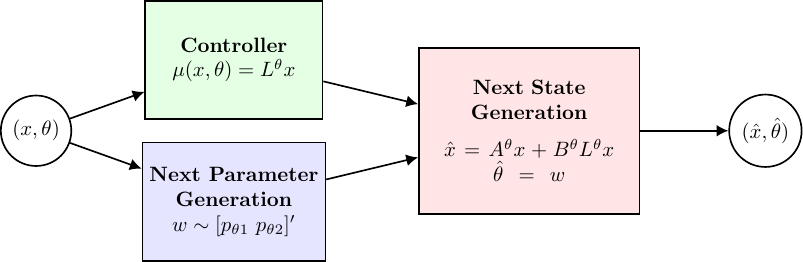}
    \caption{Evolution of the augmented state under a policy $\mu\in\widehat{\cal M}$ in a positive linear system example. Starting from an augmented state $(x,\theta)$, {the controller applies a linear policy $\mu(x,\theta)=L^\theta x$, where the feedback matrix $L^\theta$ depends on the parameter $\theta$.} Meanwhile, the next‐stage parameter $w$ is drawn according to the {two-dimensional distribution} $[p_{\theta1}\;p_{\theta 2}]'$. Then, the next state $(\hat x,\hat \theta)$ is determined by $\hat x= A^\theta x+B^\theta L^\theta x$ and $\hat\theta=w$.}
    \label{fig:mj_linear}
\end{figure}
    
    For this problem, the policies in the set $\widehat{\cal M}$ take the form 
    $$\mu(x,1)=L^1x,\qquad\mu(x,2)=L^2x,$$
    where $L^\theta$, $\theta=1,2$, are $m\times n$ matrices, and $|L^\theta|\leq H$ for $\theta=1,2$. In words, when the controller observes a parameter value $\theta$, it applies the linear policy with feedback gain matrix $L^\theta$. It then keeps applying that policy, up until it detects a parameter change from $\theta$ to $\hat \theta$, in which case it switches  to the feedback gain matrix $L^{\hat\theta}$; cf. Fig.~\ref{fig:mj_linear}

    Through calculations similar to those in Examples~\ref{eg:pos_lin} and \ref{eg:pos_lin_stoch}, we can verify that part (a)-(d) of Assumption~\ref{asm:markov_well} holds. 
    To verify that part (e) holds, let $\mu^1,\mu^2\in \widehat{\cal M}$ so that $$\mu^1(x,1)=L_1^1x,\quad \mu^1(x,2)=L^2_1x,\qquad\mu^2(x,1)=L^1_2x,\quad \mu^2(x,2)=L^2_2x,$$
    where $L_i^j$, $i,j=1,2$, are $n\times m$ matrices, and $|L_i^j|\leq H$, $i,j=1,2$. Then a policy $\mu$ defined as $$\mu(x,1)=L_1^1x,\qquad \mu(x,2)=L_2^2x,$$ also belongs to the set $\widehat{\cal M}$, thus showing that Assumption~\ref{asm:markov_well}(e) is satisfied.
\end{example}

In analogy to Section~\ref{sec:dete_formulation}, for every $\mu\in\widehat{\cal M}$, let us define a function $\overline{G}_\mu:\mathcal{C}\mapsto\mathcal{C}$, where for every $\theta\in \Theta$ and $c\in\mathcal{C}$,
    $$\big(\overline{G}_{\mu}(c)\big)(\theta)=E_{w}\Big\{q_{\mu}^{\theta w}\,\big|\,\theta\Big\}+\alpha E_{w}\Big\{(A_{\mu}^{\theta w})'c(w)\,\big|\,\theta\Big\}.$$
Then Assumption~\ref{asm:markov_well}(a) implies that for all $\theta$ and $c$,
$$\min_{\mu\in \widehat{\cal M}}\big(\overline{G}_{\mu}(c)\big)(\theta)'x=\min_{u\in U(x,\theta)}E_{w}\Big\{g(x,\theta,u,{w})+\alpha c({w})'f(x,\theta,u,{w})\,\big|\,\theta\Big\},\ \ \hbox{for all }x\in X.$$
Similar to Section~\ref{sec:dete_formulation}, we introduce a function $\big(\overline{G}(c)\big)(\theta)$, which for every $\theta$ and $c$, is uniquely defined by the equation 
\begin{equation}
        \label{eq:markov_g_bar}
        \big(\overline{G}(c)\big)(\theta)'x=\min_{\mu\in\widehat{\cal M}}{\big(\overline{G}_\mu(c)\big)(\theta)'x},\qquad \text{for all }x\in X.
    \end{equation}
From Assumption~\ref{asm:markov_well}(a), it follows that the function $\overline{G}$ is continuous.

\subsection*{The Deterministic Problem}

We will now construct a deterministic problem that falls within the framework of Section~\ref{sec:dete_formulation} and is closely related to the Markov jump problem of this section. To this end, we consider the set $\overline{X}$ that consists of $r$-tuples of the form $(x^1,\dots,x^r)$, where $x^i\in X$ for all $i=1,\ldots,r$. These $r$-tuples will serve as the states of the deterministic problem, and will be generically denoted by $\bar x$, i.e., $\Bar{x}=(x^1,\dots,x^r)$. In particular, the state of the deterministic problem at stage $k$ will be denoted by $\bar x_k$ and its components will be denoted by $x^i_k$ for $i=1,\ldots,r$.

The control constraint set at state $\bar x=(x^1,\dots,x^r)$ of the deterministic problem, denoted by $\overline{U}(\bar x)$, is the Cartesian product
\begin{equation}
    \label{eq:markov_u_bar_def}
    \overline{U}(\bar x)=\prod_{\theta=1}^rU(x^\theta,\theta).
\end{equation}
It can be seen that elements in $\overline{U}(\bar x)$ are $r$-tuples of the form $(u^1,\dots,u^r)$, where $u^i\in U(x^i,i)$ for all $i=1,\dots,r$. Similar to the notation for states, these $r$-tuples will be generically denoted by $\bar u$, i.e., $\bar u=(u^1,\dots,u^r)$. In particular, the control of the deterministic problem at stage $k$ will be denoted by $\bar u_k$ and its components will be denoted by $\bar u_k^i$ for $i=1,\dots,r$.

A policy $\mu$ of the Markov jump problem \eqref{eq:proq_markov} defines a policy $\bar \mu$ for the deterministic problem, which maps a state $\bar x= (x^1,\dots,x^r)$ to the control
$\bar \mu(\bar x)$ given by 
\begin{equation}
    \label{eq:markov_extend_mu}
    \bar \mu(\bar x)=\big(\mu(x^1,1),\dots,\mu(x^r,r)\big)
\end{equation}
for the deterministic problem. This is a special type of policy, whereby its $i$th  component $\mu(x^i,i)$ depends on a single corresponding component of the state $\bar{x}$, namely $x^i$, rather than on the entire state vector $\bar x=(x^1,\ldots,x^r)$.

Let us now introduce a state equation and cost per stage for the deterministic equivalent problem. We define the state equation as
\begin{equation}
\label{eq:markov_extend_dy}
    \bar x_{k+1}=\Bar{f}(\bar x_k,\bar u_k)=\big(\Bar{f}^1(\bar x_k,\bar u_k),\dots,\Bar{f}^r(\bar x_k,\bar u_k)\big),
\end{equation}
where
\begin{equation}
    \label{eq:mean_nl_dy_markov}
\Bar{f}^\theta(\bar x,\bar u)=\sum_{i=1}^rp_{i\theta}f(x^i,i,u^i,\theta),\quad \hbox{for all }\bar x\in \overline{X},\ \bar u\in\overline{U}(\bar x),\ \theta=1,\ldots,r,
\end{equation}
and we define a corresponding $k$th stage cost $\Bar{g}(\bar x_k,\bar u_k)$ as
\begin{equation}
    \label{eq:markov_extend_cost}
    \Bar{g}(\bar x_k,\bar u_k)=\sum_{\theta=1}^rE_{w}\Big\{g(x_k^\theta,\theta,u_k^\theta,w)\,\big|\,\theta\Big\}.
\end{equation}

The deterministic  equivalent problem is defined as follows: For every $\bar x_0\in \overline{X}$, solve
\begin{equation}
    \label{eq:proq_markov_equiv}
    \begin{aligned}
        \min_{\{\bar u_k\}_{k=0}^{\infty}}& \qquad \sum_{k=0}^{\infty} \alpha^k\Bar{g}(\bar x_k,\bar u_k)\\
	\mathrm{s.\,t.} & \qquad \bar x_{k+1}=\Bar{f}(\bar x_k,\bar u_k),\quad k = 0,1,...,\\
	& \qquad \bar u_k\in \overline{U}(\bar x_k),\quad k = 0,1,...,
    \end{aligned}
\end{equation}
and we denote by $\overline{J}^*(\bar x_0)$ the optimal cost starting from $\bar x_0$. 

\subsubsection*{Notational Convention}
\begin{itemize}
    \item[(a)] For any policy $\m\in\widehat{\cal M}$ of the Markov jump problem \eqref{eq:proq_markov},  we  denote by $\bar\mu$ the policy \eqref{eq:markov_extend_mu} of the deterministic problem \eqref{eq:proq_markov_equiv}. Moreover we denote by $\overline{{\cal M}}$ the set of policies $\bar \mu$ of the deterministic problem  that are given by Eq.~\eqref{eq:markov_extend_mu} for some $\m\in\widehat{\cal M}$.
    \item[(b)] We denote by $\overline{\cal J}$ the set of cost functions of the deterministic problem \eqref{eq:proq_markov_equiv} of the form $\overline{J}(\bar x)=c'\bar x$, where $\bar x=(x^1,\dots,x^r)$, $c=\big(c(1),\dots,c(r)\big)$, and $c(i)\in \re_+^n$, $i=1,\dots,r$.
\end{itemize} 

Next, we show that the deterministic problem \eqref{eq:proq_markov_equiv} has the semilinear structure introduced in Section~\ref{sec:dete_formulation}: under every policy $\bar \mu\in \overline{\cal M}$, the state equation $\bar f$ and cost per stage $\bar g$ are both linear in $\bar x$; cf. Eq.~\eqref{eq:semil_cond}. Moreover, the deterministic problem satisfies Assumption~\ref{asm:dete_well}, and hence it belongs to the class of problems studied in Sections~\ref{sec:dete_formulation}-\ref{sec:algo_dete}.

\begin{proposition}
\label{prop:markov_equiv_prop}
For every policy $\bar \mu\in \overline{\cal M}$ that corresponds to a policy $\mu\in\widehat{\cal M}$ as per Eq.~\eqref{eq:markov_extend_mu}, the state equation $\bar f$ and cost per stage $\bar g$ of the deterministic problem \eqref{eq:proq_markov_equiv} satisfy
$$\bar f\big(\bar x,\bar\m(\bar x)\big)=\bar A_\m \bar x,\quad \bar g\big(\bar x,\bar\m(\bar x)\big)=\bar q_\m'x,\qquad \hbox{for all }\bar x\in \overline{X},$$
where $\bar A_\m$ and $\bar q_\m$ are given by Eq.~\eqref{eq:a_mu_markov_dete}. Moreover, the deterministic problem  satisfies Assumption~\ref{asm:dete_well} with $f$, $g$, $\widehat{\cal M}$, and $\widehat{\cal J}$ replaced by $\bar f$, $\bar g$, $\overline{\cal M}$, and $\overline{\cal J}$, respectively. 
\end{proposition}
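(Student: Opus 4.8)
The plan is to verify the two assertions separately. First, for the linearity claim, fix a policy $\mu\in\widehat{\cal M}$ and its lift $\bar\mu\in\overline{\cal M}$, and compute $\bar f(\bar x,\bar\mu(\bar x))$ componentwise. By Eq.~\eqref{eq:mean_nl_dy_markov}, the $\theta$th component is $\sum_{i=1}^r p_{i\theta}f(x^i,i,\mu(x^i,i),\theta)$, which by Eq.~\eqref{eq:dy_markov_dete} equals $\sum_{i=1}^r p_{i\theta}A_\mu^{i\theta}x^i$. Stacking over $\theta=1,\dots,r$, this is precisely the matrix-vector product $\bar A_\mu\bar x$ with $\bar A_\mu$ as in Eq.~\eqref{eq:a_mu_markov_dete}; the block in row $\theta$, column $i$ is $p_{i\theta}A_\mu^{i\theta}$, matching the displayed matrix. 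Similarly, $\bar g(\bar x,\bar\mu(\bar x))=\sum_{\theta=1}^r E_w\{g(x^\theta,\theta,\mu(x^\theta,\theta),w)\mid\theta\}=\sum_{\theta=1}^r E_w\{(q_\mu^{\theta w})'x^\theta\mid\theta\}=\sum_{\theta=1}^r\big(E_w\{q_\mu^{\theta w}\mid\theta\}\big)'x^\theta=\bar q_\mu'\bar x$, again matching Eq.~\eqref{eq:a_mu_markov_dete}. Nonnegativity of $\bar A_\mu$ and $\bar q_\mu$ is immediate from nonnegativity of the $A_\mu^{\theta w}$, $q_\mu^{\theta w}$ and of the $p_{i\theta}$. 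This establishes that $\overline{\cal M}$ and $\overline{\cal J}$ form a structured pair in the sense of Eq.~\eqref{eq:semil_cond}.

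Second, I would check Assumption~\ref{asm:dete_well}(a)--(d) for the deterministic problem \eqref{eq:proq_markov_equiv}, one part at a time. For part (a): starting from $\overline J(\bar x)=c'\bar x$ with $c\in\mathcal{C}$, the VI operation $\min_{\bar u\in\overline U(\bar x)}[\bar g(\bar x,\bar u)+\alpha c'\bar f(\bar x,\bar u)]$ decouples across the index $\theta$, because $\overline U(\bar x)$ is the Cartesian product \eqref{eq:markov_u_bar_def}, $\bar g$ is a sum over $\theta$ of terms depending only on $(x^\theta,u^\theta)$, and — after writing $c'\bar f(\bar x,\bar u)=\sum_\theta c(\theta)'\bar f^\theta(\bar x,\bar u)=\sum_\theta c(\theta)'\sum_i p_{i\theta}f(x^i,i,u^i,\theta)$ and swapping the order of summation — the $\alpha$-term is also a sum over $i$ of terms depending only on $(x^i,u^i)$. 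Regrouping, the whole expression equals $\sum_{\theta=1}^r\min_{u^\theta\in U(x^\theta,\theta)}E_w\{g(x^\theta,\theta,u^\theta,w)+\alpha c(w)'f(x^\theta,\theta,u^\theta,w)\mid\theta\}$, and by Assumption~\ref{asm:markov_well}(a) the $\theta$th summand is $\hat c(\theta)'x^\theta$ for some $\hat c\in\mathcal{C}$ depending continuously on $c$; hence the VI output is $\hat c'\bar x\in\overline{\cal J}$. For the minimizing policy, Assumption~\ref{asm:markov_well}(a) gives, for each fixed $\theta$, a policy $\mu^\theta\in\widehat{\cal M}$ attaining the $\theta$th minimum; Assumption~\ref{asm:markov_well}(e) then lets me splice these diagonally into a single $\mu\in\widehat{\cal M}$ whose lift $\bar\mu$ attains the full minimum over $\overline U(\bar x)$ for every $\bar x\in\overline X$ — this is exactly the role of part (e), which has no analogue in the earlier sections. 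Part (b) is immediate: the $\hat\mu$ of Assumption~\ref{asm:markov_well}(b) with $\alpha\bar A_{\hat\mu}$ stable, together with the identification $\bar A_\mu=\bar A_\mu$ just proved, supplies the required stable policy. Part (d) follows by identifying the $N$-stage cost of \eqref{eq:proq_markov_equiv} from initial state $(x,\dots,x)$ (or an appropriate nonzero $\bar x_0$) with the $N$-stage cost of the Markov jump problem \eqref{eq:proq_markov}, which is positive by Assumption~\ref{asm:markov_well}(d); one has to be slightly careful to see that positivity of the Markov jump $N$-stage cost from every nonzero $(x_0,\theta_0)$ forces positivity of the deterministic $N$-stage cost from every nonzero $\bar x_0$, using that if $\bar x_0\ne 0$ then some component $x^{\theta}_0\ne 0$.

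The main obstacle is part (c). Assumption~\ref{asm:dete_well}(c) demands that for every $\bar v\in\Re_+^{rn}$ there is a state in $\overline X$ that is a nonnegative scalar multiple of $\bar v$; but $\overline X$ is the $r$-fold product of $X$, and a uniform scaling $s\bar v=(sv^1,\dots,sv^r)$ must land all $r$ blocks in $X$ simultaneously. This is why Assumption~\ref{asm:markov_well}(c) is stated in the stronger ``simplex-neighborhood'' form rather than the cone form of Assumption~\ref{asm:dete_well}(c): given $\bar v$, choose $s>0$ small enough that $\sum_{i=1}^n (sv^j)^i\le s_0$ for every block $j=1,\dots,r$, where $s_0$ is the scalar furnished by Assumption~\ref{asm:markov_well}(c); then each $sv^j\in X$, so $s\bar v\in\overline X$, giving exactly the cone property required of $\overline X$. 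I would present this reduction explicitly, since it is the one place where the Markov jump assumptions genuinely differ in form from the deterministic ones and a naive appeal to the product structure does not work. Having verified (a)--(d), the proposition's second claim follows, and the machinery of Sections~\ref{sec:bellman_dete}--\ref{sec:algo_dete} applies verbatim to \eqref{eq:proq_markov_equiv}.
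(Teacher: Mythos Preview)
Your proposal is correct and follows essentially the same route as the paper's proof: the componentwise computation for linearity, the decoupling of the VI minimization via the Cartesian product structure of $\overline U(\bar x)$ together with Assumption~\ref{asm:markov_well}(e) to splice the per-$\theta$ minimizers, and the use of the simplex-neighborhood form of Assumption~\ref{asm:markov_well}(c) to get a single scaling that places all $r$ blocks in $X$ simultaneously. Your explanation of why part (c) requires the stronger form of the assumption is in fact cleaner than the paper's somewhat roundabout construction, and your identification of part (d) via the decomposition $\overline G^N(0)'\bar x=\sum_\theta(\overline G^N(0))(\theta)'x^\theta$ (with positivity coming from whichever block $x^\theta$ is nonzero) matches the paper's argument exactly.
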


The proof of the proposition is deferred to Appendix~\ref{app:proof}. Since the deterministic problem \eqref{eq:proq_markov_equiv} fits into the semilinear framework of Section~\ref{sec:dete_formulation}, it can be solved with the DP methods developed in Section~\ref{sec:algo_dete}.

\subsection*{Relation Between the Deterministic Problem and Markov Jump Problem}

In what follows, we will show that to solve the Markov jump problem \eqref{eq:proq_markov}, it suffices to solve the deterministic problem \eqref{eq:proq_markov_equiv}. In particular, we will show that there exists an $r$-tuple $c^*=\big(c^*(1),\dots,c^*(r)\big)$ such that the optimal cost functions of the Markov jump problem \eqref{eq:proq_markov} and the deterministic problem \eqref{eq:proq_markov_equiv} are both characterized by $c^*$. Moreover, we will also show that there exists an optimal policy $\bar \mu\in \overline{\cal M}$ for the deterministic problem such that the corresponding policy $\mu\in\widehat{\cal M}$ is optimal for the Markov jump  problem. 

We first note that the functions $\overline{G}_\mu$ and $\overline{G}$ can be viewed as mappings from $\Re_+^{rn}$ to itself, and that the $[(\theta-1)n+1]$th to the $(\theta n)$th elements of the vectors $\overline{G}_\mu(c)$ and $\overline{G}(c)$ are $\big(\overline{G}_{\mu}(c)\big)(\theta)$ and $\big(\overline{G}(c)\big)(\theta)$, respectively. With these interpretations in mind, we are ready to state our result that connects the Markov jump problem \eqref{eq:proq_markov} and the deterministic problem \eqref{eq:proq_markov_equiv}, and establishes an equivalence relation between these two problems.

\begin{proposition}[Deterministic Equivalence]\label{prop:markov_ce}
    \begin{itemize}
        \item[(a)] There exists an $r$-tuple $c^*=\big(c^*(1),\dots,c^*(r)\big)$, $c^*(i)\in \re_+^n$ for $i=1,\dots,r$, such that the optimal cost functions $J^*$ and $\overline{J}^*$ of the Markov jump problem \eqref{eq:proq_markov} and the deterministic problem \eqref{eq:proq_markov_equiv}, respectively, satisfy 
        \begin{align*}
            &J^*(x,\theta)=c^*(\theta)'x,\quad\hbox{ for all } x\in X,\ \theta\in \Theta,\\
            &\overline{J}^*(\bar x)=\sum_{\theta=1}^rc^*(\theta)x^\theta, \quad\hbox{ for all }\bar x=(x^1,\dots,x^r)\in \overline{X}.
        \end{align*}
        Moreover, $c^*$ is the unique nonnegative $r$-tuple that satisfies 
        $$c^*(\theta)=\big(\overline{G}(c^*)\big)(\theta),\qquad \hbox{for all }\theta\in \Theta,$$ or equivalently,
        $c^*=\overline{G}(c^*).$
        \item[(b)] A policy $\mu\in \widehat{\cal M}$ is optimal for the Markov jump problem \eqref{eq:proq_markov} if and only if the corresponding policy $\bar\mu\in \overline{\cal M}$ is optimal for the deterministic problem  \eqref{eq:proq_markov_equiv}. Moreover, there exists at least one optimal policy for the Markov jump problem.
    \end{itemize}
\end{proposition}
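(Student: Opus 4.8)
The plan is to reduce everything to the deterministic problem \eqref{eq:proq_markov_equiv} via Proposition~\ref{prop:markov_equiv_prop}, and then to link the two problems through their common Bellman operator. The first step is to record the operator identifications. Because the control constraint set $\overline{U}(\bar x)=\prod_{\theta=1}^rU(x^\theta,\theta)$ is a Cartesian product, and both $\bar f$ and $\bar g$ [cf.\ Eqs.~\eqref{eq:mean_nl_dy_markov}, \eqref{eq:markov_extend_cost}] are sums of terms each depending on a single block $(x^\theta,u^\theta)$, the minimization in the Bellman equation of \eqref{eq:proq_markov_equiv} restricted to linear functions $\overline J(\bar x)=c'\bar x$, $c\in\mathcal{C}$, separates across the $r$ blocks and reproduces exactly the blockwise definition of $\overline G$ in Eq.~\eqref{eq:markov_g_bar}; likewise, for $\mu\in\widehat{\cal M}$ with associated $\bar\mu\in\overline{\cal M}$, the policy operator $c\mapsto\bar q_\mu+\alpha\bar A_\mu'c$ of \eqref{eq:proq_markov_equiv} coincides with $\overline G_\mu$, using the block structure of $\bar A_\mu$, $\bar q_\mu$ in Eq.~\eqref{eq:a_mu_markov_dete}. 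By Proposition~\ref{prop:markov_equiv_prop}, problem \eqref{eq:proq_markov_equiv} satisfies Assumption~\ref{asm:dete_well}, so Propositions~\ref{prop:unique_dete}, \ref{prop:optimal_policy}, and \ref{prop:vi_dete} all apply to it with $G$ replaced by $\overline G$.

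For part (a), Proposition~\ref{prop:unique_dete} yields a unique $c^*\in\Re_+^{rn}$ with $\overline J^*(\bar x)=(c^*)'\bar x$ and $c^*=\overline G(c^*)$; writing $c^*=(c^*(1),\dots,c^*(r))$ gives the stated formula $\overline J^*(\bar x)=\sum_\theta c^*(\theta)'x^\theta$ and the fixed point relation $c^*(\theta)=(\overline G(c^*))(\theta)$ for all $\theta$. To identify $J^*$ of the Markov jump problem, I would verify that $\tilde J(x,\theta)=c^*(\theta)'x$ solves the Bellman equation of \eqref{eq:proq_markov}: plugging $\tilde J$ into the right-hand side gives $\min_{u\in U(x,\theta)}E_w\{g(x,\theta,u,w)+\alpha c^*(w)'f(x,\theta,u,w)\mid\theta\}=(\overline G(c^*))(\theta)'x=c^*(\theta)'x$, using Assumption~\ref{asm:markov_well}(a), the definition of $\overline G$, and $c^*=\overline G(c^*)$. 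Since $\tilde J\ge0$, Proposition~\ref{prop:classical}(a) gives $\tilde J\ge J^*$. For the reverse inequality, I would run VI for \eqref{eq:proq_markov} from $J_0\equiv0$: by Assumption~\ref{asm:markov_well}(a) the iterates are linear, $J_k(x,\theta)=c_k(\theta)'x$ with $c_{k+1}=\overline G(c_k)$ and $c_0=0$, and $c_k\to c^*$ by Proposition~\ref{prop:vi_dete} applied to the deterministic equivalent; since $g\ge0$ and $J_0\le J^*$, the induction used in the proof of Proposition~\ref{prop:unique_dete} gives $J_k\le J^*$ for all $k$, so passing to the limit yields $\tilde J\le J^*$. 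Hence $J^*=\tilde J=\overline J^*$ in the stated sense, and uniqueness of $c^*$ among nonnegative $r$-tuples is inherited from Proposition~\ref{prop:unique_dete}.

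For part (b), the Markov jump problem is an instance of the general nonnegative-cost DP framework with ``state'' $(x,\theta)$ (the parameter transition being folded into the disturbance $w$), so Proposition~\ref{prop:classical}(c) applies: $\mu\in\widehat{\cal M}$ is optimal for \eqref{eq:proq_markov} iff $\mu(x,\theta)$ attains the minimum in the Bellman equation with $J^*(x,\theta)=c^*(\theta)'x$ for every $(x,\theta)$. Using Eq.~\eqref{eq:dy_markov_dete} and the definition of $\overline G_\mu$, for fixed $\theta$ this is the requirement $(\overline G_\mu(c^*))(\theta)'x=(\overline G(c^*))(\theta)'x$ for all $x\in X$, which by Assumption~\ref{asm:markov_well}(c) (taking $x$ with a single sufficiently small nonzero component) is equivalent to $(\overline G_\mu(c^*))(\theta)=(\overline G(c^*))(\theta)$, i.e., overall to $\overline G_\mu(c^*)=\overline G(c^*)$. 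On the deterministic side, Proposition~\ref{prop:classical}(c) applied to \eqref{eq:proq_markov_equiv} (with $\overline J^*(\bar x)=(c^*)'\bar x$) says $\bar\mu$ is optimal iff $\overline G_\mu(c^*)=\overline G(c^*)$, by the operator identities of the first paragraph. The two conditions coincide, proving the equivalence. Finally, Proposition~\ref{prop:optimal_policy} applied to \eqref{eq:proq_markov_equiv} produces an optimal policy $\bar\mu^*\in\overline{\cal M}$ corresponding to some $\mu^*\in\widehat{\cal M}$, which by the equivalence just shown is optimal for the Markov jump problem.

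The step I expect to require the most care is the bookkeeping behind the operator identifications, i.e., checking that the Bellman and policy operators of the $rn$-dimensional deterministic problem genuinely reduce blockwise to $\overline G$ and $\overline G_\mu$, and then the two-sided sandwich that pins $J^*$ of \eqref{eq:proq_markov} to $\tilde J$. The sandwich must be done directly (a Bellman-equation solution bounding $J^*$ from above via Proposition~\ref{prop:classical}(a), and VI bounding it from below), because $J^*$ of the Markov jump problem is not literally the optimal cost of \eqref{eq:proq_markov_equiv}; it only agrees with it after the block identification $\bar x\leftrightarrow(x,\theta)$, which is precisely what makes the operator-level correspondence the crux of the argument.
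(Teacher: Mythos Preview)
Your proposal is correct and follows essentially the same approach as the paper: invoke Proposition~\ref{prop:markov_equiv_prop} so that the deterministic problem \eqref{eq:proq_markov_equiv} satisfies Assumption~\ref{asm:dete_well}, identify its Bellman operator with $\overline G$ (the paper does this in Appendix~\ref{app:proof}, Eq.~\eqref{eq:markov_equiv_bellman_op}), and then run the two-sided sandwich argument of Proposition~\ref{prop:stoch_ce} to pin down $J^*$ and the optimality equivalence. The paper's own proof is simply a one-line pointer to Proposition~\ref{prop:stoch_ce}; you have unpacked exactly those details.
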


\begin{proof}
By Prop.~\ref{prop:markov_equiv_prop}, Assumption~\ref{asm:dete_well} holds for the deterministic problem \eqref{eq:proq_markov_equiv}. As a consequence, we can use similar arguments to the ones used in the proof of Prop.~\ref{prop:stoch_ce} to establish both parts of the proposition.
\end{proof}

Regarding algorithms to compute the optimal vectors $c^*(1),\ldots,c^*(r)$ and a corresponding optimal policy, we can apply the VI, PI, and mathematical programming methods of Section \ref{sec:algo_dete} to  the deterministic problem  \eqref{eq:proq_markov_equiv}. This is straightforward, except that for the Markov jump problem, the state dimension is increased from $n$ (the dimension of the original state space $X$) to $rn$ (the dimension of the deterministic certainty equivalent state space). 

The complexity of the controller is also accordingly increased.  As an illustration, for positive linear Markov jump problems such as the one of Example 6.1, the result of the computation is a set of $m\times n$ linear gain matrices $L_1,\ldots,L_r$, one for each of the parameter values $1,\ldots,r$. The total number of optimal controller parameters is $mnr$, an increase of a factor of $r$ over the corresponding problem without jump parameters.

It is worth noting the distinction from the classical linear quadratic control problem with Markov jump parameters. In that problem, the DP algorithm yields a family of cost functions, one for each mode, and the corresponding Bellman's equations take the form of coupled matrix relations. These coupled matrix equations are \emph{not identical} to their counterparts without jump parameters; see \cite[Section~4.3]{costa2005discrete}. By contrast, our equivalence principle enlarges the state space in a way that preserves the semilinear structure. As a result, the deterministic reformulation leads to a single Bellman's equation that is \emph{identical} to those studied in Sections~\ref{sec:dete_formulation}–\ref{sec:algo_dete}. In this respect, our approach and results are closer to the filtering problem for Markov jump systems, where, under suitable conditions, an augmented deterministic formulation produces a matrix equation of the same form as in the classical case without jump parameters; see \cite[Section~5.4]{costa2005discrete}.

\section{Concluding Remarks}

In this paper, we have considered a broad class of infinite horizon DP problems characterized by partially linear structures and positivity properties in their state equations and cost functions. Our analysis encompasses both deterministic and stochastic formulations, including problems with Markov jump parameters. We have studied the existence and uniqueness of solutions to Bellman's equation, as well as the properties of optimal policies. Additionally, we have established the convergence of VI and PI algorithms, along with their variants. For problems involving stochastic Markov jump parameters, we have shown that a form of certainty equivalence holds, enabling the use of algorithms designed for deterministic problems.

Our semilinear DP theory integrates and extends several DP concepts. First, it expands the set of favorably structured DP problems, with a focus on the underlying idea of a structured pair of cost functions $\widehat{\cal J}$ and policies $\widehat{\cal M}$. Second, it generalizes the positive linear models developed in \cite{li2024exact} by providing a unifying framework for their analysis and by incorporating stochastic parameters. Finally, it formalizes the semilinear model, thereby unifying classes of problems that arise in a variety of application areas, such as engineering, biology, and economics.


\appendix

\section{Infinite Horizon Dynamic Programming with Nonnegative Cost per Stage}\label{app:a}
In this appendix we provide a brief account of the results  that we will use from infinite horizon DP theory for nonnegative cost problems. We consider discrete-time stochastic optimal control problems involving the system
\begin{equation}
\label{eq:dynamics}
    x_{k+1}=f(x_k,u_k,w_k),\quad k=0,\,1,\,\dots,
\end{equation}
where $x_k$ and $u_k$ are state and control at stage $k$, belonging to state and control spaces $X$ and $U$, respectively. The random disturbance $w_k$ takes values from a countable set $W$, with probability distribution that depends on the state-control pair $(x_k,u_k)$. The function $f$ maps $X\times U\times W$ to $X$. The control $u_k$ must be chosen from a nonempty constraint set $U(x_k)\subset U$ that may depend on $x_k$. The cost for the $k$th stage, denoted by $g(x_k,u_k,w_k)$, is assumed nonnegative:
\begin{equation}
\label{eq:cost_nonneg}
    g(x_k,u_k,w_k)\geq0,\qquad x_k\in X,\,u_k\in U(x_k),\,w_k\in W.
\end{equation}
We are interested in feedback policies of the form $\pi=\{\mu_0,\mu_1,\dots\}$, where $\mu_k$ is a function mapping a state $x\in X$ into the control $\mu_k(x)\in U(x)$. The set of all policies is denoted by $\Pi$. Policies of the form $\pi=\{\mu,\mu,\dots\}$ are called stationary, and when confusion cannot arise, will be denoted by $\mu$.

Given an initial state $x_0$, a policy $\pi=\{\mu_0,\mu_1,\dots\}$ when applied to system \eqref{eq:dynamics}, generates a random sequence of state control pairs $\big(x_k,\mu_k(x_k)\big),$ $k=0,1,\dots$, with cost 
$$J_{\pi}(x_0)=\lim_{N\to\infty}\mathop{E}_{\substack{w_k \\k=0,\dots,N-1}}\Bigg\{\sum_{k=0}^{N-1}\alpha^kg\big(x_k,\mu_k(x_k),w_k\big)\Bigg\},\qquad  x_0\in X.$$
We view $J_\pi$ as a nonnegative function over $X$, and we refer to it as the cost function of $\pi$. For a stationary policy $\mu$, the corresponding cost function is denoted by $J_\mu$. The optimal cost function is defined by
$$J^*(x)=\inf_{\pi\in\Pi}J_{\pi}(x),\qquad x\in X,$$
and a policy $\pi^*$ is said to be optimal if it attains the minimum of $J_\pi(x)$ for every $x\in X$, i.e., 
$$J_{\pi^*}(x)=\inf_{\pi\in \Pi}J_\pi(x)=J^*(x),\qquad x\in X.$$

The VI algorithm starts with some nonnegative function $J_0$, and generates a sequence of functions $\{J_k\}$ according to 
\begin{equation}
    \label{eq:vi}
    J_{k+1}(x)=\inf_{u\in U(x)}E\Big\{g(x,u,w)+\alpha J_k\big(f(x,u,w)\big)\Big\},\qquad x\in X.
\end{equation}
The PI algorithm starts with a stationary policy $\mu^0$, and generates a sequence of stationary policies $\{\mu^k\}$ via a sequence of policy evaluations to obtain $J_{\mu^k}$ from the equation 
\begin{equation}
    \label{eq:policy_ev}
    J_{\mu^k}(x)=E\Big\{g\big(x,\mu^k(x),w\big)+\alpha J_{\mu^k}\big(f\big(x,\mu^k(x),w\big)\big)\Big\},\qquad x\in X,
\end{equation}
interleaved with policy improvements to obtain $\mu^{k+1}$ from $J_{\mu^k}$ via 
\begin{equation}
    \label{eq:policy_im}
    \mu^{k+1}(x)\in \arg\min_{u\in U(x)}E\Big\{g(x,u,w)+\alpha J_{\mu^k}\big(f(x,u,w)\big)\Big\},\qquad x\in X,
\end{equation}
where we have assumed that the  minimum above is attained for every $x\in X$.

The following proposition provides the results that we will use in this paper.

\begin{proposition}\label{prop:classical}
Let the cost nonnegativity condition \eqref{eq:cost_nonneg} hold.
\begin{itemize}
    \item[(a)] $J^*$ satisfies Bellman's equation $$J^*(x)=\inf_{u\in U(x)}E\Big\{g(x,u,w)+\alpha J^*\big(f(x,u,w)\big)\Big\},\qquad \hbox{for all }x\in X,$$
and if a nonnegative function $\hat{J}$ satisfies
    $$\inf_{u\in U(x)}E\Big\{g(x,u,w)+\alpha \hat{J}\big(f(x,u,w)\big)\Big\}\leq \hat{J}(x),\qquad \hbox{for all }x\in X,$$
    then $J^*\leq \hat{J}$.
    \item[(b)] For all stationary policies $\mu$ we have 
    $$J_{\mu}(x)=E\Big\{g\big(x,\mu(x),w\big)+\alpha J_{\mu}\big(f\big(x,\mu(x),w\big)\big)\Big\},\qquad \hbox{for all }x\in X.$$
    Moreover, if a nonnegative function $\hat{J}$ satisfies
    \begin{equation}
    \label{eq:classic_mu_bound}
        E\Big\{g\big(x,\mu(x),w\big)+\alpha\hat{J}\big(f\big(x,\mu(x),w\big)\big)\Big\}\leq \hat{J}(x),\qquad \hbox{for all }x\in X,
    \end{equation}
    then $J_\mu\leq \hat{J}$.
    \item[(c)] A stationary policy $\mu^*$ is optimal if and only if 
    $$\mu^*(x)\in\arg\min_{u\in U(x)}E\Big\{g(x,u,w)+\alpha J^*\big(f(x,u,w)\big)\Big\},\qquad \hbox{for all }x\in X.$$
    \item[(d)] Let $\mu$ and $\tilde\mu$ be two stationary policies such that
    $$\tilde{\mu}(x)\in\arg\min_{u\in U(x)}E\Big\{g(x,u,w)+\alpha J_\mu\big(f(x,u,w)\big)\Big\},\qquad \hbox{for all }x\in X.$$
    Then $J_{\tilde\mu}\leq J_\mu$.
    \item[(e)] If a function $J$ satisfies 
    $$J^*\leq J\leq sJ^*$$
    for some scalar $s>1$, then the sequence $\{J_k\}$ generated by VI with $J_0=J$ converges to $J^*$, i.e., $J_k\to J^*$.
\end{itemize}
\end{proposition}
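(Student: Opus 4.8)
The plan is to recast everything in terms of the Bellman operators
$$(TJ)(x)=\inf_{u\in U(x)}E\big\{g(x,u,w)+\alpha J(f(x,u,w))\big\},\qquad (T_\mu J)(x)=E\big\{g(x,\mu(x),w)+\alpha J(f(x,\mu(x),w))\big\},$$
and to lean on three elementary facts that hold precisely because $g\ge0$: (i) $T$ and $T_\mu$ are monotone, i.e.\ $J\le J'$ implies $TJ\le TJ'$ and $T_\mu J\le T_\mu J'$; (ii) $TJ=\inf_\mu T_\mu J$, the pointwise infimum over $u\in U(x)$; and (iii) writing $\mathbf 0$ for the zero function, $T_\mu^N\mathbf 0$ is exactly the $N$-stage cost of $\mu$ and increases monotonically to $J_\mu$ as $N\to\infty$, by monotone convergence since the summands are nonnegative. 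All five assertions are classical for nonnegative-cost (``monotone increasing'' / P) models, and I would cite \cite{bertsekas1978stochastic,bertsekas1975monotone,bertsekas1977monotone,bertsekas2022abstract} while reproducing the short arguments below.

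I would establish (b) and (d) first, as the most elementary. For (b), the identity $J_\mu=T_\mu J_\mu$ follows by conditioning the infinite sum on the first transition and passing the limit inside the expectation via monotone convergence. For the bound, if a nonnegative $\hat J$ satisfies $T_\mu\hat J\le\hat J$, then repeated application of $T_\mu$ with monotonicity gives $\hat J\ge T_\mu^N\hat J\ge T_\mu^N\mathbf 0$, and letting $N\to\infty$ with fact (iii) yields $\hat J\ge J_\mu$. Part (d) is then immediate: if $\tilde\mu$ attains the minimum defining $TJ_\mu$, then $T_{\tilde\mu}J_\mu=TJ_\mu\le T_\mu J_\mu=J_\mu$ by facts (ii) and (b), so $J_\mu$ is excessive for $T_{\tilde\mu}$ and the bound just proved gives $J_{\tilde\mu}\le J_\mu$.

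For (a), the Bellman equation $J^*=TJ^*$ is the fundamental fixed-point property of P models; $J^*\ge TJ^*$ comes from conditioning any policy's cost on the first stage, and $J^*\le TJ^*$ from splicing $\epsilon$-optimal continuation policies. The substantive content is that $J^*$ is the \emph{smallest} nonnegative function satisfying $T\hat J\le\hat J$: given such a $\hat J$, fix $\epsilon>0$ and a summable sequence $\epsilon_k$ with $\sum_k\alpha^k\epsilon_k\le\epsilon$, and at each stage select a control attaining $T_{\mu_k}\hat J\le T\hat J+\epsilon_k\le\hat J+\epsilon_k$; using $T_{\mu_k}(\hat J+c)=T_{\mu_k}\hat J+\alpha c$, an induction gives $T_{\mu_0}\cdots T_{\mu_{N-1}}\hat J\le\hat J+\sum_{k=0}^{N-1}\alpha^k\epsilon_k\le\hat J+\epsilon$, and bounding the left side below by $T_{\mu_0}\cdots T_{\mu_{N-1}}\mathbf 0\uparrow J_\pi$ yields $J^*\le J_\pi\le\hat J+\epsilon$ for every $\epsilon$. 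Part (c) then follows by combining (a) and (b): if $\mu^*$ attains the minimum with $J^*$, then $T_{\mu^*}J^*=TJ^*=J^*$, so $J_{\mu^*}\le J^*$ by (b), while $J_{\mu^*}\ge J^*$ always, giving optimality; conversely $J_{\mu^*}=J^*$ forces $T_{\mu^*}J^*=T_{\mu^*}J_{\mu^*}=J_{\mu^*}=J^*=TJ^*$, so $\mu^*$ attains the minimum.

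Finally, for (e) I would argue by a vanishing-tail estimate. The lower bound $T^NJ\ge T^NJ^*=J^*$ is immediate from monotonicity and (a). For the upper bound, $T^NJ\le T^N(sJ^*)$, and I would read $T^N(sJ^*)(x_0)$ as the optimal $N$-stage cost with terminal cost $sJ^*$; evaluating along a near-optimal infinite-horizon policy $\pi$ gives $T^N(sJ^*)(x_0)\le J^*(x_0)+(s-1)\alpha^N E\{J^*(x_N)\}$, and since $J^*(x_0)<\infty$ is the convergent series $E\{\sum_{k=0}^\infty\alpha^k g\}$ along $\pi$, the tail $\alpha^N E\{J^*(x_N)\}\to0$, so $T^N(sJ^*)\to J^*$ and the squeeze $J^*\le T^NJ\le T^N(sJ^*)$ yields $T^NJ\to J^*$. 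The main obstacle throughout is the smallest-excessive-function argument of (a), specifically handling the infimum over controls through the summable-tolerance construction (and the attendant measurable-selection point, which the paper's countable-$W$ convention renders harmless); the remaining parts are routine consequences of monotonicity and monotone convergence.
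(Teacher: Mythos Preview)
The paper does not actually prove this proposition: it states that parts (a)--(d) ``are well known and can be found in several sources'' (citing \cite{bertsekas1978stochastic,bertsekas2012dynamic}), and that part (e) is a ``less known result'' due to Yu and Bertsekas \cite{yu2015mixed}, with a simpler version in \cite[Prop.~4.4.6]{bertsekas2022abstract}. There is no in-paper argument to compare against.

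Your proposal supplies the standard arguments that those references contain, and they are correct. Two minor points on part (e), where you do a little more than cite: (i) when you evaluate $T^N(sJ^*)(x_0)$ along a \emph{near}-optimal policy $\pi$, the displayed inequality should carry an $\epsilon$ term, i.e.\ $T^N(sJ^*)(x_0)\le J^*(x_0)+\epsilon+(s-1)\alpha^N E\{J^*(x_N)\}$, since an exactly optimal policy need not exist in P-models; (ii) the assertion $\alpha^N E\{J^*(x_N)\}\to0$ is justified by the chain $J^*(x_N)\le J_{\sigma^N\pi}(x_N)$ and $\alpha^N E\{J_{\sigma^N\pi}(x_N)\}=J_\pi(x_0)-E\{\sum_{k<N}\alpha^k g_k\}$, the tail of a convergent series. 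With these routine clarifications your argument for (e) matches the Yu--Bertsekas proof in spirit and is sound.
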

 Parts (a)-(d) of Prop.~\ref{prop:classical} are well known and can be found in several sources; see the books \cite{bertsekas1978stochastic}, \cite{bertsekas2012dynamic}, and the references cited there. Prop.~\ref{prop:classical}(e) is a less known result, which was first formulated and proved in the paper by Yu and Bertsekas \cite[Theorem~5.1]{yu2015mixed}, in a form that also addressed the associated measurability issues for stochastic optimal control problems. This result will be used to assert the uniqueness of solution of Bellman's equation and the convergence of VI within our semilinear context. A simpler form of this result, which applies to deterministic problems, or problems without measurability restrictions, such as the ones of the present paper, is given in the abstract DP book \cite[Prop.~4.4.6, Ch.~4]{bertsekas2022abstract}.

Another important result that we need in our analysis is the Perron-Frobenius theorem. The proof of the following version can be found in \cite[Theorem 3, Section 6.2]{luenberger1979introduction}; see also \cite[Prop.~6.6, Ch.~2]{bertsekas1989parallel}.
\begin{proposition}\label{prop:f_p}
    Let $A$ be an $n\times n$ matrix such that $A\geq 0$. Then there exists some scalar $\lambda\geq0$ and some nonzero vector $v\in \Re_+^n$ such that $Av=\lambda x$, while for all other eigenvalues $\rho$ of $A$, we have $|\rho|\leq \lambda$.
\end{proposition}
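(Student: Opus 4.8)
The plan is to reduce the general nonnegative case to the strictly positive case by a perturbation argument, using compactness of the probability simplex and continuity of the spectral radius to pass to the limit. First I would set $E=\mathbf{1}\mathbf{1}'$ and, for each $\epsilon>0$, consider the entrywise strictly positive matrix $A_\epsilon=A+\epsilon E$. For a strictly positive matrix the classical strong Perron theorem applies: $\rho(A_\epsilon)$ is a (simple, positive) eigenvalue of $A_\epsilon$ with a strictly positive eigenvector; I would normalize it so that $v_\epsilon>0$ and $\mathbf{1}'v_\epsilon=1$, and write $\lambda_\epsilon=\rho(A_\epsilon)$, so that $A_\epsilon v_\epsilon=\lambda_\epsilon v_\epsilon$.

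Next I would let $\epsilon\downarrow0$ along a sequence $\epsilon_j\to0$. Each $v_{\epsilon_j}$ lies in the compact simplex $\{x\in\Re^n_+:\mathbf{1}'x=1\}$, so along a subsequence $v_{\epsilon_j}\to v$ with $v\ge0$ and $\mathbf{1}'v=1$, in particular $v\neq0$. Since the eigenvalues of a matrix are the roots of its characteristic polynomial and hence depend continuously on its entries, $\rho(\cdot)$ is continuous, so $\lambda_{\epsilon_j}=\rho(A_{\epsilon_j})\to\rho(A)=:\lambda\ge0$; one also sees that $\lambda_\epsilon$ decreases as $\epsilon\downarrow0$, since $0\le A\le A_\epsilon$ entrywise implies $0\le A^k\le A_\epsilon^k$ entrywise and hence $\rho(A)\le\rho(A_\epsilon)$ via Gelfand's formula with an entrywise-monotone matrix norm. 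Passing to the limit in $A_{\epsilon_j}v_{\epsilon_j}=\lambda_{\epsilon_j}v_{\epsilon_j}$ gives $Av=\lambda v$.

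Finally, the dominance property is immediate from $\lambda=\rho(A)$: by definition $\rho(A)$ is the maximum of $|\rho|$ over all eigenvalues $\rho$ of $A$, so $|\rho|\le\lambda$ for every eigenvalue, and $\lambda\ge0$ because it is a modulus. This yields the claim, with nonnegative nonzero eigenvector $v$ and eigenvalue $\lambda$.

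The only substantive input---and the main obstacle---is the strong Perron theorem for strictly positive matrices invoked in the first step. For a fully self-contained treatment this is the part that must be proved, e.g.\ through the Collatz--Wielandt characterization $\rho(B)=\max_{x\ge0,\,x\neq0}\min_{i:\,x_i>0}(Bx)_i/x_i$ combined with a compactness/fixed-point argument, or through the strict contraction of $B$ on the positive cone in Hilbert's projective metric; since the version stated here is entirely standard, I would instead simply cite it, as the paper does. A minor point is the degenerate case $A=0$ (and, more generally, $\rho(A)=0$), where $\lambda=0$ and any nonzero $v\ge0$ works; the perturbation argument above handles these uniformly, with no separate treatment needed.
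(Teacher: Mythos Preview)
Your perturbation argument is correct and is one of the standard routes to the nonnegative Perron--Frobenius theorem: it reduces the claim to the strictly positive case via $A_\epsilon=A+\epsilon\mathbf{1}\mathbf{1}'$, then uses compactness of the simplex and continuity of the spectral radius to pass to the limit. The steps are sound, including the identification $\lambda=\rho(A)$ (which makes the dominance inequality automatic) and the handling of the degenerate case $\rho(A)=0$.

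That said, the paper does not actually prove this proposition: it simply states the result and cites \cite[Theorem~3, Section~6.2]{luenberger1979introduction} and \cite[Prop.~6.6, Ch.~2]{bertsekas1989parallel}. So there is no ``paper's own proof'' to compare against; your write-up goes further than the paper by supplying an argument. What the cited sources buy is a self-contained treatment that does not presuppose the strictly positive Perron theorem (Luenberger's proof, for instance, works directly with the Collatz--Wielandt quotient on the nonnegative cone), whereas your approach trades that for a short, clean reduction at the cost of invoking the positive case as a black box. Either is acceptable for the purposes of this paper, which only uses Perron--Frobenius as an auxiliary tool in the proof of Prop.~\ref{prop:optimal_policy}.
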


\section{Proof of Prop.~\ref{prop:markov_equiv_prop}}\label{app:proof}
We give the proof of Prop.~\ref{prop:markov_equiv_prop} in two steps. First, we  show that the deterministic problem~\eqref{eq:proq_markov_equiv} preserves the semilinear structure: under any policy $\bar \mu\in\overline{\cal M}$, both the state equation and cost per stage of problem \eqref{eq:proq_markov_equiv} becomes linear. Then, we  show that Assumption~\ref{asm:markov_well}, which applies to the Markov jump problem \eqref{eq:proq_markov}, implies that the deterministic problem \eqref{eq:proq_markov_equiv} satisfies Assumption~\ref{asm:dete_well}, with appropriate changes in notation.

\begin{proof}
By the definition of $\Bar{f}^\theta$ [cf. Eq.~\eqref{eq:mean_nl_dy_markov}], for every $\bar \mu\in \overline{\cal M}$ and $\bar x=(x^1,\dots,x^r)\in\overline{X}$, we have
$$\Bar{f}^\theta\big(\bar x,\bar\mu(\bar x)\big)=\sum_{i=1}^rp_{i\theta}A_\mu^{i\theta}x^i,\quad \theta=1,\dots,r,$$
where $\mu\in \widehat{\cal M}$ corresponds to $\bar \mu$ as per Eq.~\eqref{eq:markov_extend_mu} and the equality follows from Eq.~\eqref{eq:dy_markov_dete}. From Eq.~\eqref{eq:markov_extend_dy},
\begin{equation}
    \label{eq:markov_extended_full}
    \Bar{f}\big(\bar x,\bar\mu(\bar x)\big)=\begin{bmatrix}
        \sum_{i=1}^rp_{i1}A_\mu^{i1}x^i\\
        \sum_{i=1}^rp_{i2}A_\mu^{i2}x^i\\
        \vdots\\
        \sum_{i=1}^rp_{ir}A_\mu^{ir}x^i
    \end{bmatrix}=\begin{bmatrix}
p_{11}A_\mu^{11} & p_{21}A_\mu^{21} & \cdots & p_{r1}A_\mu^{r1}\\
p_{12}A_\mu^{12} & p_{22}A_\mu^{22} & \cdots & p_{r2}A_\mu^{r2}\\
\vdots  & \vdots  & \ddots & \vdots  \\
p_{1r}A_\mu^{1r} & p_{2r}A_\mu^{2r} & \cdots & p_{rr}A_\mu^{rr}
\end{bmatrix}\begin{bmatrix}
        x^1\\
        x^2\\
        \vdots\\
        x^r
    \end{bmatrix}.
\end{equation}
By the definition of $\Bar{A}_\mu$ in Eq.~\eqref{eq:a_mu_markov_dete}, Eq.~\eqref{eq:markov_extended_full} can be compactly written as $\Bar{f}\big(\bar x,\mu(\bar x)\big)=\Bar{A}_\mu \bar x$. Similarly, for $\bar \mu\in\overline{\cal M}$, we have that $\Bar{g}\big(\bar x,\bar \mu(\bar x)\big)=\Bar{q}_\mu'\bar x,$ in view of the definition of $\Bar{q}_\mu$, cf., Eq.~\eqref{eq:a_mu_markov_dete}. Hence, under any policy $\bar\mu\in \overline{\cal M}$, the state equation and the cost per stage of problem \eqref{eq:proq_markov_equiv} are both linear in $\bar x$, confirming the semilinear structure.

Let us now verify that Assumption~\ref{asm:dete_well} holds for problem \eqref{eq:proq_markov_equiv}. To show that part (a) holds, let us consider the expression of $\inf_{\bar u\in \overline{U}(\bar x)}\big\{\Bar{g}(\bar x,\bar u)+\alpha c'\Bar{f}(\bar x,\bar u)\big\}$. Straightforward calculation yields: 
\begin{equation}
    \label{eq:markov_asm_well_c_calculation}
    \begin{aligned}
    &\inf_{\bar u\in \overline{U}(\bar x)}\big\{\Bar{g}(\bar x,\bar u)+\alpha c'\Bar{f}(\bar x,\bar u)\big\}\\
    =&\inf_{(u^1,\dots,u^\theta)\in \overline{U}(\bar x)}\Bigg\{\sum_{\theta=1}^rE_{w}\Big\{g(x^\theta,\theta,u^\theta,w)\,\big|\,\theta\Big\}+\alpha \sum_{w=1}^rc'(w)\Bar{f}^{w}(\bar x,\bar u)\Bigg\}\\
    =&\inf_{(u^1,\dots,u^\theta)\in \overline{U}(\bar x)}\Bigg\{\sum_{\theta=1}^rE_{w}\Big\{g(x^\theta,\theta,u^\theta,w)\,\big|\,\theta\Big\}+\alpha \sum_{w=1}^rc'(w)\sum_{\theta=1}^rp_{\theta w}f(x^\theta,\theta,u^\theta,w)\Bigg\}\\
    =&\inf_{(u^1,\dots,u^\theta)\in \overline{U}(\bar x)}\Bigg\{\sum_{\theta=1}^rE_{w}\Big\{g(x^\theta,\theta,u^\theta,w)\,\big|\,\theta\Big\}+\alpha \sum_{\theta=1}^r\sum_{w=1}^rp_{\theta w}c'(w)f(x^\theta,\theta,u^\theta,w)\Bigg\}\\
    =&\inf_{(u^1,\dots,u^\theta)\in \overline{U}(\bar x)}\Bigg\{\sum_{\theta=1}^rE_{w}\Big\{g(x^\theta,\theta,u^\theta,w)\,\big|\,\theta\Big\}+\alpha \sum_{\theta=1}^rE_{w}\Big\{c'(w)f(x^\theta,\theta,u^\theta,w)\,\big|\,\theta\Big\}\Bigg\}\\
    =&\inf_{(u^1,\dots,u^\theta)\in \overline{U}(\bar x)}\sum_{\theta=1}^rE_{w}\Big\{g(x^\theta,\theta,u^\theta,w)+\alpha c'(w)f(x^\theta,\theta,u^\theta,w)\,\big|\,\theta\Big\}\\
    =&\inf_{(u^1,\dots,u^\theta)\in U(x^1,1)\times\dots \times U(x^r,r)}\sum_{\theta=1}^rE_{w}\Big\{g(x^\theta,\theta,u^\theta,w)+\alpha c'(w)f(x^\theta,\theta,u^\theta,w)\,\big|\,\theta\Big\}\\
    =&\sum_{\theta=1}^r\inf_{u^\theta\in U(x^\theta,\theta)}E_{w}\Big\{g(x^\theta,\theta,u^\theta,w)+\alpha c'(w)f(x^\theta,\theta,u^\theta,w)\,\big|\,\theta\Big\}.
    \end{aligned}
\end{equation}
The first equality above holds by the definition of elements in $\overline{U}(\bar x)$ [cf., Eq.~\eqref{eq:markov_u_bar_def}], $\Bar{g}$ [cf., Eq.~\eqref{eq:markov_extend_cost}], and $\Bar{f}$ [cf., Eq.~\eqref{eq:markov_extend_dy}]. The second equality holds by the definition of $\Bar{f}^\theta$ [cf., Eq.~\eqref{eq:mean_nl_dy_markov}]. In the third equality, we exchange the order of the summation of $\theta$ and $w$ for the terms $p_{\theta w}c'(w)f(x^\theta,\theta,u^\theta,w)$, and in the fourth equality, we write these sums as conditional expectations. In the second to last equality, we use the definition of $\overline{U}(\bar x)$ given in Eq.~\eqref{eq:markov_u_bar_def}. The last equality holds because minimizing the term $E_{w}\Big\{g(x^\theta,\theta,u^\theta,w)+\alpha c'(w)f(x^\theta,\theta,u^\theta,w)\,\big|\,\theta\Big\}$ over $u^\theta$ is completely independent of other $\hat{\theta}\neq \theta$. 

By the definition of $\overline{G}$ [cf. Eq.~\eqref{eq:markov_g_bar}], we have that
\begin{equation}
    \label{eq:markov_equiv_g}
    \big(\overline{G}(c)\big)(\theta)'x^\theta =\inf_{u^\theta\in U(x^\theta,\theta)}E_{w}\big\{g(x^\theta,\theta,u^\theta,w)+\alpha c'(w)f(x^\theta,\theta,x^\theta,w)\,|\,\theta\big\}
\end{equation}
for $\theta=1,\dots,r$. Combining Eqs.~\eqref{eq:markov_asm_well_c_calculation} with \eqref{eq:markov_equiv_g} yields
\begin{equation}
\label{eq:markov_equiv_bellman_op}
    \overline{G}(c)'\bar x=\sum_{\theta=1}^r\big(\overline{G}(c)\big)(\theta)'x^\theta=\inf_{\bar u\in \overline{U}(\bar x)}\big\{\Bar{g}(\bar x,\bar u)+\alpha c'\Bar{f}(\bar x,\bar u)\big\}.
\end{equation}
In other words, we have that 
\begin{equation}
    \label{eq:markov_equiv_asm_a}
    \hat{c}'\bar x=\inf_{\bar u\in \overline{U}(\bar x)}\big\{\Bar{g}(\bar x,\bar u)+\alpha c'\Bar{f}(\bar x,\bar u)\big\},
\end{equation}
where $\hat{c}=\overline{G}(c)$, and $\hat{c}$ depends continuously on $c$ since $\overline{G}$ is continuous by Assumption~\ref{asm:markov_well}(a).

Next, we show that minimum in Eq.~\eqref{eq:markov_equiv_asm_a} is attained by some policy $\bar\mu\in \overline{\cal M}$. By Assumption~\ref{asm:markov_well}(a), for every $c\in \mathcal{C}$ and $\theta\in \Theta$, there exists $\mu^\theta\in \widehat{\cal M}$, such that
\begin{equation}
    \label{eq:markov_asm_well_c}
    \big(\overline{G}_{\mu^\theta}(c)\big)(\theta)'x^\theta=\inf_{u^\theta\in U(x^\theta,\theta)}E_{w}\big\{g(x^\theta,\theta,u^\theta,w)+\alpha c'(w)f(x^\theta,\theta,x^\theta,w)\,|\,\theta\big\}
\end{equation}
for all $\bar x=(x^1,\dots,x^r)\in \Re_+^{rn}$. In view of Assumption~\ref{asm:markov_well}(e), there exists a policy $\tilde\mu\in\widehat{\cal M}$ such that $\tilde\mu(x^\theta,\theta)=\mu^\theta(x^\theta,\theta)$ for all $x^\theta\in X,\,\theta\in \Theta$. Hence, 
\begin{equation}
    \label{eq:markov_equiv_common_mu}
    \big(\overline{G}_{\tilde\mu}(c)\big)(\theta)=\big(\overline{G}_{\mu^\theta}(c)\big)(\theta),\quad\text{for all }\theta\in \Theta.
\end{equation}
Combining Eqs.~\eqref{eq:markov_asm_well_c_calculation}, \eqref{eq:markov_asm_well_c}, and \eqref{eq:markov_equiv_common_mu}, we obtain
\begin{equation}
\label{eq:op_policy_markov}
    \overline{G}_{\tilde\mu}(c)'\bar x=\inf_{\bar u\in \overline{U}(\bar x)}\big\{\Bar{g}(\bar x,\bar u)+\alpha c'\Bar{f}(\bar x,\bar u)\big\}.
\end{equation}
Let $\bar\mu\in \overline{\cal M}$ be a policy that corresponds to $\tilde \mu\in \widehat{\cal M}$ as per Eq.~\eqref{eq:markov_extend_mu}. Then the minimum is attained at $\bar \mu$.

Part (b) holds directly for problem \eqref{eq:proq_markov_equiv} in view of Assumption~\ref{asm:markov_well}(b). To show that Assumption~\ref{asm:dete_well}(c) holds for problem~\eqref{eq:proq_markov_equiv}, let $v=(v^1,\dots,v^r)$ be a vector in $\Re^{rn}_+$. By Assumption~\ref{asm:markov_well}(c), there exist vectors $x^\theta\in X$ and scalars $s^\theta\geq 0$, $\theta=1,\dots,r$, such that $s^\theta x^\theta= v^\theta$ for all $\theta$. As a result, we have that $s^\theta x^\theta/s\in X$, where $s=\max\{s^1,\dots,s^r\}$, and $v=s\cdot[(s^1x^1,\dots,s^rx^r)/s]$, and $(s^1x^1,\dots,s^rx^r)/s\in \overline{X}$.

We now show that the deterministic problem \eqref{eq:proq_markov_equiv} also satisfies Assumption~\ref{asm:dete_well}(d). By a principle of optimality argument (see, e.g., \cite[Section~1.3]{bertsekas2017dynamic}), the problem of minimizing the cost 
$$\mathop{E}_{\substack{w_k \\k=0,\dots,N-1}}\Bigg\{\sum_{k=0}^{N-1}\alpha^kg(x_k,\theta_k,u_k,w_{k})\Bigg\},$$
starting from $(x_0,\theta_0)$ has the optimal value $\big(\overline{G}^N(0)\big)(\theta_0)'x_0$, where $0\in \re_+^{rn}$ stands for the $rn$-dimensional zero vector. The value $\big(\overline{G}^N(0)\big)(\theta_0)'x_0$ is positive for all nonzero $x_0$ and all parameters $\theta_0$. Let us now consider $\overline{G}^N(0)'\bar x$, where $\bar x=(x^1,\dots,x^r)$ is nonzero. We have that
$$\overline{G}^N(0)'\bar x=\sum_{\theta=1}^r\big(\overline{G}^N(0)\big)(\theta)'x^\theta>0,$$
where the inequality follows from the fact that $x^\theta$ is nonzero for some $\theta$. Again using a principle of optimality argument and by Eq.~\eqref{eq:markov_equiv_bellman_op}, $\overline{G}^N(0)'\bar x$ is the optimal cost of the problem of minimizing the cost $\sum_{k=0}^{N-1}\alpha^k\Bar{g}(\bar x_k,\bar u_k)$ with $\bar x_0=\bar x$. As a result, Assumption~\ref{asm:dete_well}(d) holds for problem \eqref{eq:proq_markov_equiv}. The proof is complete.
\end{proof}

\bibliographystyle{alpha}
\bibliography{ref}

\end{document}